\documentclass[11pt,fullpage]{article}
\usepackage{amsmath}
\usepackage{amsfonts,amssymb}
\usepackage{amsthm}
\usepackage{hyperref}
\usepackage{graphics,graphicx}
\usepackage{color}
\usepackage[right = 2.5cm, left=2.5cm, top = 2.5cm, bottom =2.5cm]{geometry}
\pagestyle{plain}

\newcommand{\Real}{\mathbb R}
\newcommand{\norm}[1]{\|#1\|}
\newcommand{\abs}[1]{\left\vert#1\right\vert}
\newcommand{\set}[1]{\left\{#1\right\}}

\newcommand{\grad}{\nabla}

\newcommand{\mollify}[1]{ \mathcal{J}_\epsilon #1 }
\newcommand{\conv}[2]{#1 \ast #2}

\newcommand{\K}{\mathcal{K}}
\newcommand{\N}{\mathcal{N}}

\newcommand{\wknorm}[2]{\norm{#1}_{L^{#2,\infty}}}
\newcommand{\wkspace}[1]{L^{#1,\infty}}
\newcommand{\F}{\mathcal{F}}


\newcommand{\reg}[1]{#1^\epsilon}

\newcommand{\kernel}{\mathcal{K}}

\newcommand{\pd}{D_T}

\newtheorem{theorem}{Theorem}
\theoremstyle{remark}
\newtheorem{remark}{Remark}

\theoremstyle{theorem}

\newtheorem{proposition}{Proposition}
\theoremstyle{definition}
\newtheorem{definition}{Definition}
\theoremstyle{lemma}
\newtheorem{lemma}{Lemma}
\theoremstyle{corollary}

\begin{document}

\title{Inhomogeneous Patlak-Keller-Segel models and Aggregation Equations with Nonlinear Diffusion in $\Real^d$} 

\author{Jacob Bedrossian\footnote{\textit{jacob@cims.nyu.edu}, New York University, Courant Institute of Mathematical Sciences. Partially supported by NSF Postdoctoral Fellowship in Mathematical Sciences, DMS-1103765},  Nancy Rodr\'iguez\footnote{\textit{nrodriguez@math.stanford.edu},Stanford University, Department of Mathematics. Partially supported by NSF Postdoctoral Fellowship in Mathematical Sciences, DMS-1103769}}

\date{}

\maketitle

\begin{abstract}
Aggregation equations and Patlak-Keller-Segel (PKS) models for chemotaxis with nonlinear diffusion are popular models for nonlocal aggregation phenomenon and are a source of a number of interesting mathematical problems in nonlinear PDE.
The purpose of this work is twofold. 
First, we continue our previous work \cite{BRB10}, which focused on nonlocal aggregation, modeled with a convolution. 
 The goal was to unify the local and global theory of these convolution-type models, including the identification of a sharp critical mass; however, some cases involving unbounded domains were left open. In particular, the biologically relevant case $\Real^2$ was not treated. 
In this paper, we present an alternative proof of local existence, which now applies to $\Real^d$ for all $d \geq 2$ and give global results that were left open \cite{BRB10}. 
The proof departs from \cite{BertozziSlepcev10,BRB10} in that it uses a more direct and intuitive regularization that constructs approximate solutions on $\Real^d$ instead of on sequences of bounded domains. 
Second, this work develops the local, subcritical, and small data critical theory for a variety of Patlak-Keller-Segel models with spatially varying diffusion and decay rate of the chemo-attractant.
\end{abstract} 

\section{Introduction} 
In this paper we study several types of aggregation models with nonlinear diffusion and nonlocal self-attraction. 
The primary focus is to develop and extend the relevant local, subcritical, and small data critical/supercritical theory. 
These results exist in the \emph{perturbative regime} as they all fundamentally treat the PDE as a nonlinear perturbation of the diffusion equation (see below for more information). 
Furthermore, we also present several non-perturbative global existence results for a class of critical problems as well. \\

The first general class of systems we study are those where the nonlocal self-attraction arises as the result of a convolution operator
\begin{equation} \label{def:ADD}
\left\{
\begin{array}{l}
  u_t + \nabla \cdot (u \nabla \K \ast u) = \Delta A(u), \\
  u(0,x) = u_0(x) \in L_+^1(\Real^d;(1+\abs{x}^2)dx)\cap L^\infty(\Real^d),
\end{array}
\right.
\end{equation}
where  $L_+^1(\Real^d;\mu) := \set{f \in L^1(\Real^d;\mu): f \geq 0}$.  
Equation \eqref{def:ADD} is of interest in mathematical biology as it models the competition between a species' desire to aggregate and to disperse.
Dispersal is modeled via the, potentially nonlinear, diffusion term $\Delta A(u)$ and the aggregation is modeled via the nonlocal advective term $\nabla\cdot(u\nabla \K\ast u)$.
The most well-known example of \eqref{def:ADD} is the parabolic-elliptic Patlak-Keller-Segel model, based on the original models of Patlak \cite{Patlak} and Keller and Segel \cite{KS}. 
For more information on the modeling aspects, see the \cite{Hortsmann,HandP} for reviews of various chemotaxis models and \cite{TopazBertozziLewis06,Bio,Milewski,GurtinMcCamy77,Burger07} for more general swarming and aggregation models. 
In this paper, we extend our recent work \cite{BRB10} to provide a more satisfactory and complete local and global theory for \eqref{def:ADD} on $\Real^d$ for $d\geq 2$. 
In \cite{BRB10}, we
studied the local and global existence and uniqueness of bounded, integrable solutions to \eqref{def:ADD} in bounded domains for $d\geq 2$ and all space for $d\geq 3$. 
The primary goal of the previous 
work was to unify the existing Patlak-Keller-Segel global existence theory \cite{SugiyamaDIE06,SugiyamaADE07,SugiyamaDIE07,BlanchetEJDE06,Kowalczyk05,CalvezCarrillo06,Blanchet09} with the local existence and uniqueness theory for less singular versions of \eqref{def:ADD} \cite{BertozziSlepcev10}.
In that work , the $\Real^2$ case was not treated due to several technical difficulties. 
Since this case is very important for mathematical biology, we make specific effort to treat this case and discuss the difficulties in more detail below.  
 \\
    
We present an alternative proof of local existence of \eqref{def:ADD} for a wide range of $A$ and $\K$ which applies on $\Real^d$, $d\geq 2$, for solutions with bounded second moment.
The new proof is based on a regularization directly 
 on $\Real^d$, in contrast to \cite{BRB10}. 
 One of the benefits of this regularization is that it allows one to rigorously justify the application of homogeneous Sobolev embeddings in formal arguments, which are crucial in deducing small data global existence and uniform boundedness in supercritical cases. We prove such results below following iteration techniques similar to those employed in \cite{Corrias04,SugiyamaDIE06,BedrossianIA10}.
We also expand the global existence results of \cite{BRB10} to estimate the critical mass for kernels with a logarithmic 
 singularity at the origin in $\Real^d$.
\\

The second class of nonlocal aggregation models we study is the variable-coefficient parabolic-elliptic Patlak-Keller-Segel model, 
\begin{equation} \label{def:InhKS}
\left\{
\begin{array}{l}
  u_t + \nabla \cdot (u \nabla c) = \Delta A(u), \\
  -\grad \cdot (a(x)\nabla c) + \gamma(x)c = u, \\ 
  u(0,x) = u_0(x) \in L_+^1(\Real^d;(1+\abs{x}^2)dx)\cap L^\infty(\Real^d).
\end{array}
\right.
\end{equation}
We assume $a(x) \in C^1$ is strictly positive and $\gamma(x) \in L^\infty$ is non-negative.  
This PDE system is in many ways similar to \eqref{def:ADD}, however they are not of the same form, as the solution to 
the equation for the chemical concentration, $c$, cannot be written in convolution form unless $a(x)$ and $\gamma(x)$ are both constant. 
In this paper we develop the local, subcritical and small data critical/supercritical theory for \eqref{def:InhKS}. 
The proofs are analogous to those of \eqref{def:ADD} with additional complications arising due to the different nature of the estimates for $c,\grad c, D^2c$ in terms of $u$ (see Appendix). 
Analysis of the critical case and the identification of the critical mass has been completed by I. Kim and one of the authors in a separate work \cite{BedrossianKim10}.\\

One of the most important properties of \eqref{def:ADD} and \eqref{def:InhKS} is that each dissipate the following \emph{free energy} 
\begin{align}
\F(u(t)) & = S(u(t)) - \mathcal{W}(u(t)). \label{def:F}
\end{align}
The \emph{entropy}, $S(u(t))$, and the \emph{interaction energy (potential energy)}, $\mathcal{W}(u(t))$, are given by
\begin{align*}
S(u(t)) & = \int \Phi(u(x,t)) dx, \\ 
\mathcal{W}(u(t)) & = \frac{1}{2}\int u(x,t)c(x,t) dx, 
\end{align*}
with $c(x,t) = \K \ast u$ if the system is of convolution type.  
The \emph{entropy density (internal energy density)}, $\Phi(z)$, is a strictly convex function satisfying
\begin{equation}
\Phi^{\prime\prime}(z) = \frac{A^\prime(z)}{z}, \;\;\; \Phi^\prime(1) = 0, \;\;\; \Phi(0) = 0. \label{def:G}
\end{equation}
In fact, both \eqref{def:ADD} and \eqref{def:InhKS} are formally the gradient flows for \eqref{def:F} in the Euclidean Wasserstein metric (see e.g. \cite{AmbrosioGigliSavare}). 
For \eqref{def:ADD} and \eqref{def:InhKS} there is no fully developed theory for making this precise; however, some aspects can be recovered and have proven very useful \cite{BlanchetCalvezCarrillo08,BlanchetCarlenCarrillo10}. 
In any case, the free energy \eqref{def:F} plays an important role, especially in the global theory as in for example \cite{SugiyamaDIE06,BlanchetEJDE06,CalvezCarrillo06,Blanchet09,Blanchet08,BRB10}.   

\subsubsection*{Notation and Conventions}
We work on $\Real^d$ for $d\geq 2$.  For notational simplicity we denote the parabolic domain by $\Real^d_T = \Real^d\times [0,T]$ and the standard $L^p$ norm by 
$\norm{u}_p=\norm{u}_{L^p(\Real^d)}.$  
We also introduce the following notation for the $k^{th}$ moments:
\begin{align*}
\mathcal{M}_k(f) = \int \abs{x}^k\abs{f(x)}\;dx.
\end{align*}
We let $\eta(x) \in C_c^\infty(\Real^d)$ with $0 < \eta(x) \leq 1$ for $\abs{x} < 1$, $\eta(x) = 0$ for $\abs{x} \geq 1$, and $\eta(x) \equiv 1$ for $\abs{x} \leq 1/2$ be our canonical cut-off function and denote the standard mollifer $\mollify{v} := \eta \ast v$.\\

We use $\mathcal{N}$ to denote the Newtonian potential: 
\begin{equation*}
\mathcal{N}(x) = 
\left\{
\begin{array}{ll}
  \frac{1}{2\pi}\log \abs{x} & d = 2 \\ 
  \frac{\Gamma(d/2 + 1)}{d(d-2)\pi^{d/2}}\abs{x-y}^{2-d} & d \geq 3. 
\end{array}
\right.
\end{equation*}
By `weighted Young's inequality' we mean for $a,b > 0$ and $1 = \frac{1}{p} + \frac{1}{q}$ and $\epsilon > 0$,  
\begin{equation*}
ab \leq \epsilon^p\frac{a^p}{p} + \epsilon^{-q}\frac{b^q}{q}.
\end{equation*} 
Since we will be working with many largely irrelevant constants, we use the notation $f\lesssim_{p,k,...} g$ 
to denote $f\leq C(p,k,..)g$, where $C(p,k,..)$ is a generic constant that depends on $p,k$ etc.	    

\subsection{Definitions and Assumptions}
We consider the general class of kernels introduced in \cite{BRB10}, which includes fundamental solutions to elliptic PDEs and other commonly considered attractive kernels.

\begin{definition}[Admissible Kernel] \label{def:admK}
We say a kernel $\K \in C^3 \setminus \set{0}$ is \emph{admissible} if $\K \in W^{1,1}_{loc}$ and the following holds:
\begin{itemize}
\item[\textbf{(KN)}] $\K$ is radially symmetric, $\K(x) = k(\abs{x})$ and $k(\abs{x})$ is non-increasing.
\item[\textbf{(MN)}] $k^{\prime\prime}(r)$ and $k^\prime(r)/r$ are monotone on $r \in (0,\delta)$ for some $\delta > 0$. 
\item[\textbf{(BD)}] $\abs{D^3\K(x)} \lesssim \abs{x}^{-d-1}$. 
\end{itemize}
\end{definition}

This definition ensures that $\K$ is attractive, well-behaved at the origin, and has second derivatives that define bounded distributions on $L^p$ for $1 < p < \infty$. 
The obvious example of an admissible kernel is the Newtonian potential, which is effectively the most singular admissible kernel both at the origin and at infinity (in the sense that it decays the slowest).
We remark that many of our results (\S\ref{subsec:LocExistK} and \S\ref{sec:ContSubEtc}) still hold if we replace condition \textbf{(KN)} with the assumption $\K(x) = \K(-x)$, allowing for non-radially symmetric and general attractive/repulsive type kernels.\\

We limit ourselves to diffusions that do not spread mass faster than linear diffusion; however, using the techniques of \cite{CalvezCarrillo06} one could also treat cases with fast diffusion.
This is more general than the diffusion considered in \cite{BertozziSlepcev10,BRB10}, which were restricted to degenerate diffusion.

\begin{definition}[Admissible Diffusion Functions] \label{def:admDiff}
We say that the function $A(u)$ is an admissible diffusion function if: 
\begin{itemize}
\item[\textbf{(D1)}] $A\in C^1([0,\infty))\;\text{with}\;A'(z)>0\;\text{for}\;z\in (0,\infty)$ and $A(0) = 0$.
\item[\textbf{(D2)}] $A'(z)>c\;\text{for}\;z > z_c$ for some $c,z_c > 0$. 
\item[\textbf{(D3)}] $A^\prime(z) \leq C_A$ when $z < z_A$ for some $z_A, C_A > 0$. 
\end{itemize}
\end{definition}
\noindent
In particular \textbf{(D3)} implies $A(z) = \mathcal{O}(z)$ as $z \rightarrow 0$ and 
$\int_{\set{u < R}} A(u) \lesssim_{C_A,R} M$ for all $R < \infty$. \\

Following \cite{BRB10,BertozziSlepcev10,BertozziBrandman10} we use the following notion of weak solution, which is stronger than traditional distribution solutions. 
In $d \geq 3$, test functions are taken in $\dot{H}^1$, whereas in $d = 2$ minor adjustments must be made, as discussed below and in \cite{BertozziBrandman10}. 
By density arguments, this is basically the same as taking test functions in $C^\infty_c$ and requiring various regularity
assumptions on the solution.  However, but we prefer the current statement of the definition to emphasize the kind of test functions that we are interested in. 
Taking test functions in these spaces is important for the proof of uniqueness, which is based on an $\dot{H}^{-1}$ stability estimate \cite{BertozziSlepcev10,BertozziBrandman10,BRB10,AzzamBedrossian11}.
 
\begin{definition}[Weak Solution in $\Real^d$, $d\geq 2$] \label{def:WSRD}
Let $A$ and $\kernel$ be admissible, and $u_0\in L^1_+(\Real^d;(1+\abs{x}^2)dx)\cap L^\infty(\Real^d)$.  If $d \geq 3$, a measurable function $u:[0,T] \times \Real^d \rightarrow [0,\infty)$ is a weak solution of \eqref{def:ADD} or \eqref{def:InhKS} if $u\in L^\infty((0,T)\times\Real^d)\cap L^\infty(0,T,L_+^1(\Real^d;(1+\abs{x}^2)dx))$, $A(u)\in L^2(0,T,\dot{H}^1(\Real^d))$, $u\grad c \in L^2(0,T;L^2(\Real^2))$, $u_t\in
  L^2(0,T,\dot{H}^{-1}(\Real^d))$, and for all test functions $\phi \in L^\infty(0,T;\dot{H}^{1}(\Real^d))$, 
\begin{equation*}
\int_0^T <u_t,\phi(t)>_{\dot{H}^{-1} \times \dot{H}^1} dt = -\int_0^T \int \left(\grad A(u) - u\grad c\right)\cdot\grad\phi(t) dx dt. 
\end{equation*}
If $d = 2$, a measurable function $u:[0,T] \times \Real^2 \rightarrow [0,\infty)$ is a weak solution of \eqref{def:ADD} or \eqref{def:ADD} if $u\in L^\infty((0,T)\times\Real^2)\cap L^\infty(0,T,L_+^1(\Real^2;(1+\abs{x}^2)dx))$, $\grad A(u)\in L^2(0,T,L^2(\Real^2))$, $u\grad c \in L^2(0,T;L^2(\Real^2))$, $u_t\in L^2(0,T,\mathcal{V}^\star(\Real^2))$, and for all test functions $\phi(t) \in L^\infty(0,T;\mathcal{V})$ we have, 
\begin{equation*}
\int_0^T <u_t,\phi(t)>_{\mathcal{V}^\star \times \mathcal{V}} dt = -\int_0^T \int \left(\grad A(u) - u\grad c \right)\cdot\grad\phi(t) dx dt,
\end{equation*}
where $\mathcal{V} = \set{f \in L^\infty(\Real^2) : \grad f \in L^2(\Real^2)}$.  When solving \eqref{def:ADD} then $c = \K \ast u$ and when solving \eqref{def:InhKS} then $c(t)$ is the strong solution to  
$-\nabla \cdot (a(x)\nabla c(t)) + \gamma(x) c(t) = u(t)$ which vanishes at infinity.
\end{definition}

\begin{remark} \label{rmk:TimeWSRD}
Due to the regularity we are imposing on our solution, we could equivalently require for all $\phi \in \dot{H}^1$ for a.e. $t \in (0,T)$
\begin{equation*}
<u_t,\phi>_{\dot{H}^{-1} \times \dot{H}^1} = -\int \left(\grad A(u) - u\grad c \right)\cdot\grad\phi(t) dx,   
\end{equation*}
with the modification in $\Real^2$. 
\end{remark}

\begin{remark}
The additional complication in $\Real^2$ is due to the fact that the norm $\norm{\grad f}_2$ is not well-behaved in $\Real^2$, since $\norm{\grad f}_2$ scales like $L^\infty$ in $\Real^2$. Indeed, there exists a sequence of Schwartz functions with $\norm{\grad f_n}_2 = 1$ and $f_n \rightarrow \infty$ point-wise a.e. (consider $f(x) = \log\log(1 + \abs{x}^{-1})\mathbf{1}_{\abs{x} < 1}$ and scaling $f_\lambda(x) = f(\lambda x)$, $\lambda \rightarrow 0$). 
In order to remove such pathologies from our space of test functions we follow \cite{BertozziBrandman10}, which also requires test functions to lie in $L^\infty(\Real^2)$. 
\end{remark}
\vspace{6pt}
Following \cite{BRB10}, we now define a notion of criticality for \eqref{def:ADD}, which in general has no scaling symmetries. 
However, a kind of scaling symmetry can be recovered in the limit of mass concentration, which in turn is expected to govern blow-up (see Theorem \ref{thm:Continuation} below). 
Suppose $\K(x) \sim \abs{x}^{-d/p}$ as $x \rightarrow 0$, $A(u) = u^m$ with $m > 1$ and let $u \in C^\infty_c \cap L_+^1$.  
Then if $u_\lambda(x) = \lambda^{-d}u(\lambda^{-1}x)$ we have, 
\begin{equation*}
\lim_{\lambda \rightarrow 0}\F(u_\lambda) = \lim_{\lambda \rightarrow 0}\left[\frac{\lambda^{d-dm}}{m-1}\int u^m dx - \frac{\lambda^{-d/p}}{2}\int\int u(x)u(y)\abs{x - y}^{-d/p} dxdy\right].
\end{equation*}
From here we see the limit is independent of $u$ unless $d-dm = -d/p$. If the limit is $-\infty$ we expect aggregation to dominate near mass concentration and if the limit is $+\infty$ we expect diffusion to dominate. 
As mass concentration should occur on vanishing length-scales, we may use this scaling heuristic to define a notion of criticality.
The limiting case of $p \rightarrow \infty$ occurs when considering $\K(x) \sim -\log\abs{x}$ as $x \rightarrow 0$ and is discussed more below. Of course this corresponds to the 2D classical parabolic-elliptic PKS model, but we can consider the same singularity in higher dimensions (as done in for example \cite{KarchSuzuki10}).  

\begin{definition}[Critical Exponent]\label{def:mstar}
Suppose $\K$ is admissible such that for some $d/(d-2) \leq p \leq \infty$ (with the convention that $p = \infty$ if $d = 2$) we have $D^2\K(x) = \mathcal{O}\left(\abs{x}^{-d/p - 2}\right)$ as $x \rightarrow 0$. Then the \emph{critical exponent} associated to $\K$ is given by
 
\begin{equation*}
1 \leq m^\star = \frac{p+1}{p} \leq 2 - 2/d.
\end{equation*} 
For the variable-coefficient Patlak-Keller-Segel system \eqref{def:InhKS} we take $m^\star = 2-2/d$. 
\end{definition}
\begin{remark}
Due to the monotonicity assumptions in Definition \ref{def:admK} (see also Lemma \ref{lem:PropAdmissible} above), for $p < \infty$ the definition is equivalent to requiring that $\K(x) = \mathcal{O}(\abs{x}^{-d/p})$ as $x \rightarrow 0$ which is the same as requiring $\K \in L^{p,\infty}_{loc}$. Similarly, when $m^\star = 1$ we have that $\K(x) = \mathcal{O}(\log\abs{x})$ as $x \rightarrow 0$. 
\end{remark}
\vspace{6pt}
\begin{remark}
The variable coefficient system \eqref{def:InhKS} should be roughly as singular as the constant coefficient case, hence the corresponding definition in this case. 
\end{remark}
\vspace{6pt}

Now we define the notion of criticality by relating the critical exponent of the kernel to the diffusion, again focusing on the limit of mass concentration. It is easier to define this notion in terms of the quantity
$A^\prime(z)$, as opposed to using $\Phi(z)$ directly.  This is not so surprising as $A'(z)$ is precisely the local coefficient of diffusivity, and directly measures the strength of the diffusion relative to the mass density. 
\begin{definition}[Criticality] \label{def:criticality}
We say that the problem is \emph{subcritical} if
\begin{equation*}
\liminf_{z \rightarrow \infty} \frac{A^\prime(z)}{z^{m^{\star}-1}} = \infty,
\end{equation*}
\emph{critical} if
\begin{equation*}
0 < \liminf_{z \rightarrow \infty} \frac{A^\prime(z)}{z^{m^{\star}-1}} < \infty,
\end{equation*}
and \emph{supercritical} if
\begin{equation*}
\liminf_{z \rightarrow \infty} \frac{A^\prime(z)}{z^{m^{\star}-1}} = 0.
\end{equation*}
\end{definition}

The following lemma, from \cite{BRB10}, enumerates several important aspects of admissible kernels. 
Part \textit{(c)} in particular provides a useful characterization of kernels with $m^\star < 2-2/d$.
 
\begin{lemma} \label{lem:PropAdmissible}
Let $\K$ be admissible. Then each of the following is true: 
\begin{itemize}
\item[(a)] $\grad \K \in \wkspace{\frac{d}{d-1}}$, and if $d \geq 3$ then $\K \in \wkspace{\frac{d}{d-2}}$. If $d = 2$, then $\K \in BMO(\Real^2)$.
\item[(b)] For all $p$, $1 < p < \infty$, $\exists \;C(p) > 0$ such that for all $f \in L^p(\Real^d)$, $\norm{D^2\K \ast f}_p \leq C(p)\norm{f}_p$. Moreover, 
$C(p) \lesssim p$ as $p \rightarrow \infty$. 
\item[(c)] Let $1\leq m^*<2-2/d$ and $\beta$ be such that $1 < \beta \leq d/2$. Then, $D^2\K \in \wkspace{\beta}_{loc}$ if and only if $\grad \K \in \wkspace{d/(d/\beta - 1)}_{loc}$. 
If $d\geq 3$, then $\K \in \wkspace{d/(d/\beta - 2)}_{loc}$ if and only if $D^2\K \in \wkspace{\beta}_{loc}$ and $m^\star = 1 + 1/\beta - 2/d$ for some 
$1 < \beta \leq d/2$ if and only if $D^2\K\in \wkspace{\beta}_{loc}$. Note by \textbf{(BD)}, if $D^2\K \in \wkspace{\beta}_{loc}$ for some $\beta > 1$, then, $D^2\K \in \wkspace{\beta}$.  
\end{itemize} 
\end{lemma}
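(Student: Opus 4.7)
The strategy is to exploit that the admissibility conditions force \emph{power-law scaling} near the origin while (BD) gives us a third-derivative bound usable globally. Because (KN) asserts $\K$ is radial and non-increasing and $\K \in C^3\setminus\{0\}$ decays at infinity (it must, since a non-increasing radial $C^3$ function that is $W^{1,1}_{loc}$ and admits bounds on $D^3\K$ at large $|x|$ by (BD) cannot be bounded below), I would first integrate (BD) along rays from $|x|$ out to infinity to obtain the pointwise bounds
\[
\abs{D^2\K(x)} \lesssim \abs{x}^{-d},\qquad \abs{\grad \K(x)} \lesssim \abs{x}^{-(d-1)},\qquad \abs{\K(x)} \lesssim \begin{cases} \abs{x}^{-(d-2)} & d\geq 3,\\ 1+\abs{\log\abs{x}} & d=2,\end{cases}
\]
valid on all of $\Real^d\setminus\{0\}$ (using that the monotonicity and sign structure force $\K$, $|\grad\K|$ to be non-increasing in $|x|$ after integration). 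From these pointwise bounds, part (a) is immediate: the distribution function of $|x|^{-\alpha}$ is $\lesssim t^{-d/\alpha}$, placing $\grad\K$ in $\wkspace{d/(d-1)}$ and $\K$ in $\wkspace{d/(d-2)}$ for $d\geq 3$; when $d=2$, $\log|x|$ is the canonical example of a BMO function, so $\K \in BMO(\Real^2)$.

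For (b), I would verify that $D^2\K$ is a Calderón--Zygmund kernel. The size condition is the pointwise bound just derived; the Hörmander/smoothness condition follows directly from (BD) since $|\grad D^2\K|\lesssim |x|^{-d-1}$; the mean-zero condition on spherical annuli comes from radial symmetry (KN), which makes the angular integral of $D^2\K$ (the traceless part plus a divergence) vanish up to a pure trace that is controlled. Then the standard Calderón--Zygmund theorem yields boundedness on $L^p$ for $1<p<\infty$. The sharp growth $C(p)\lesssim p$ as $p\to\infty$ comes from tracking constants in the CZ interpolation between $L^2$ and the $BMO$ endpoint, for which the classical Stein bound gives exactly linear growth in $p$.

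For (c), the key point is that (KN) together with (MN) (monotonicity of $k''$ and $k'/r$ near the origin) locks the singularity into power-law form: if $D^2\K \in \wkspace{\beta}_{loc}$, the combination of monotonicity and the weak-$L^\beta$ bound forces $|D^2\K(x)|\lesssim |x|^{-d/\beta}$ near $0$, and conversely any such pointwise bound gives weak-$L^\beta$ membership. Integrating once radially gives $|\grad \K(x)|\lesssim |x|^{-d/\beta+1}$, hence $\grad\K \in \wkspace{d/(d/\beta-1)}_{loc}$; integrating again gives, for $d\geq 3$ and $d/\beta>2$, $|\K(x)|\lesssim |x|^{-d/\beta+2}$, i.e.\ $\K\in \wkspace{d/(d/\beta-2)}_{loc}$. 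Each converse is obtained by differentiating the power-law bound and invoking (MN) to legitimize the pointwise derivative control. The identification $m^\star=1+1/\beta-2/d$ then follows by comparing with Definition \ref{def:mstar}: $\K\sim |x|^{-d/p}$ at the origin corresponds to $d/p = d/\beta-2$, whence $m^\star=(p+1)/p = 1+1/p = 1 + 1/\beta - 2/d$. The last sentence follows from (BD), which already furnishes the global decay of $D^2\K$, so the local weak-$L^\beta$ membership upgrades to a global one.

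The main obstacle is the pointwise-to-weak-$L^p$ and back step in (c): one has to argue carefully that (MN) is strong enough to promote a weak-$L^p$ bound into an actual pointwise power-law estimate near the origin (the radial monotonicity of $k''$ and $k'/r$ provides the rearrangement needed so that the weak-$L^p$ quasinorm pins down the decay rate of the function itself, not merely its distribution function). Once that step is in place, the remaining manipulations are algebraic exponent-chasing.
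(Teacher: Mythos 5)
The paper does not prove Lemma~\ref{lem:PropAdmissible}; it is cited directly from \cite{BRB10}, so there is no in-paper proof to compare against. Evaluated on its own terms, your overall strategy — integrate \textbf{(BD)} along rays to get power-law pointwise bounds, convert those to weak-$L^p$/BMO membership, and then use Calder\'on--Zygmund theory for part (b) — is the natural route. But as written the argument has two concrete gaps.

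First, the integration step in part (a) requires knowing a priori that $D^2\K$, $\grad\K$, and (for $d \geq 3$) $\K$ itself tend to $0$ at infinity; only then does $\abs{D^2\K(x)} = \abs{\int_{\abs{x}}^\infty \partial_r D^2\K(r\omega)\,dr} \lesssim \abs{x}^{-d}$ follow. Your parenthetical justification for this decay (``$\K$ decays at infinity\ldots since [it] cannot be bounded below'') is not coherent: being unbounded below is the opposite of decaying, and in $d=2$ the model kernel $-\frac{1}{2\pi}\log\abs{x}$ really does go to $-\infty$. More seriously, the stated axioms do \emph{not} rule out growth of the derivatives at infinity: $\K(x) = -\abs{x}^2$ satisfies \textbf{(KN)}, \textbf{(MN)}, \textbf{(BD)} (since $D^3\K\equiv 0$), is in $C^3\setminus\{0\}\cap W^{1,1}_{loc}$, yet $\grad\K = -2x$ is not in any weak-$L^p$. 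So the vanishing at infinity must either be supplied by an argument you have not given, or it is an additional hypothesis carried over from \cite{BRB10} that the summary Definition~\ref{def:admK} suppresses; you cannot simply assert it.

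Second, the Calder\'on--Zygmund cancellation condition invoked in part (b) does not follow from radial symmetry alone. For a radial $\K$, the sphere-average of $D^2\K$ is $\tfrac{1}{d}\Delta\K(r)\cdot I$, which is zero only when $\K$ is harmonic away from the origin (as the Newtonian potential is). For a general admissible $\K$ with $\Delta\K\neq 0$, the traceless part of $D^2\K$ is indeed a CZ kernel, but the trace contribution $\tfrac{1}{d}(\Delta\K \ast f)\,I$ must be estimated separately, and your proof only gestures at it (``a pure trace that is controlled'') without saying how — this is nontrivial since $\Delta\K$ typically decays only like $\abs{x}^{-d}$ at infinity, which is not $L^1$. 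You need an explicit decomposition and argument here (e.g., splitting $\Delta\K$ into a near-origin $L^1$ piece plus a delta plus a far-field piece handled by a maximal-function or Cotlar-type bound). The $C(p)\lesssim p$ growth and your treatment of part (c) — using \textbf{(MN)} to pass between weak-$L^\beta$ and pointwise power laws and then integrating — are on solid ground once the decay-at-infinity issue above is resolved.
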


We will also need the logarithmic Hardy-Littlewood-Sobolev inequality in order to relate the interaction energy to the Boltzmann entropy, as in for instance \cite{Dolbeault04,BlanchetEJDE06,BRB10}.  

\begin{lemma}[Logarithmic Hardy-Littlewood-Sobolev inequality \cite{CarlenLoss92}] \label{lem:log_sob}
Let $d \geq 2$ and $f \in L_+^1(\Real^d)$ be such that $f \log f \in L^1(\Real^d)$. Then, 
\begin{equation}
-\int\int_{\Real^d\times \Real^d}f(x)f(y) \log\abs{x-y} dxdy \leq \frac{\norm{f}_1}{d}\int_{\Real^d} f\log f dx + C(\norm{f}_1). \label{ineq:log_sob}
\end{equation}
\end{lemma}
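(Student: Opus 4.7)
The plan is to derive the inequality from the classical subcritical Hardy--Littlewood--Sobolev (HLS) inequality by a $\lambda \to 0^+$ limiting argument that extracts the logarithmic endpoint. The starting point is the elementary pointwise bound
\begin{equation*}
-\log s \leq \frac{s^{-\lambda}-1}{\lambda}, \qquad s > 0,\ \lambda > 0,
\end{equation*}
which follows from $e^x \geq 1+x$ applied at $x = -\lambda\log s$. Setting $s = \abs{x-y}$ and writing $M = \norm{f}_1$, this yields
\begin{equation*}
-\int\int f(x)f(y)\log\abs{x-y}\,dxdy \leq \frac{1}{\lambda}\left[\int\int f(x)f(y)\abs{x-y}^{-\lambda}\,dxdy - M^2\right].
\end{equation*}

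I would then apply subcritical HLS with $p = q = 2d/(2d-\lambda)$, for which the scaling relation $2/p + \lambda/d = 2$ holds, to obtain
\begin{equation*}
\int\int f(x)f(y)\abs{x-y}^{-\lambda}\,dxdy \leq C_\lambda \norm{f}_p^{\,2},
\end{equation*}
where $C_\lambda$ is the HLS constant. By continuity in $\lambda$ at $\lambda = 0$, where HLS reduces to H\"older with equality, $C_\lambda = 1 + \lambda C'(d) + O(\lambda^2)$. Writing $p = 1 + \epsilon$ with $\epsilon = \lambda/(2d) + O(\lambda^2)$ and Taylor expanding $f^{p-1} = e^{(p-1)\log f}$ under the integral gives $\norm{f}_p^{\,p} = M + \epsilon\int f\log f\,dx + O(\epsilon^2)$, which after raising to the $2/p$ power produces
\begin{equation*}
\norm{f}_p^{\,2} = M^2 + \frac{\lambda}{d}\left[M\int f\log f\,dx - M^2 \log M\right] + O(\lambda^2).
\end{equation*}
Substituting both expansions, the singular $M^2/\lambda$ terms cancel exactly, leaving
\begin{equation*}
-\int\int f(x)f(y)\log\abs{x-y}\,dxdy \leq \frac{M}{d}\int f\log f\,dx - \frac{M^2}{d}\log M + C'(d) M^2 + O(\lambda),
\end{equation*}
and sending $\lambda \to 0^+$ delivers the lemma with the explicit constant $C(M) = C'(d)M^2 - (M^2/d)\log M$.

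The main technical hurdles are twofold: first, ensuring that the HLS bound is even finite for some range of small $\lambda > 0$ under the sole hypotheses $f \in L^1_+$ and $f\log f \in L^1$ (note that $f\log f \in L^1$ does not by itself put $f$ into any $L^p$ with $p > 1$); second, justifying the uniform $O(\lambda^2)$ control on the Taylor remainders in the $\norm{f}_p^{\,2}$ expansion. Both are to be handled by first proving the inequality for bounded, compactly supported $f$ (where $\norm{f}_p^{\,p}$ is an analytic function of $p$ near $1$ and HLS is manifestly applicable), and then extending to the stated class by truncation $f_n = \min(f,n)\mathbf{1}_{\abs{x}\leq n}$. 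One passes to the limit using monotone convergence on $\{f \geq 1\}$ and dominated convergence on $\{f < 1\}$ for the entropy, together with Fatou's lemma applied to the positive part $(-\log\abs{x-y})_+$ and dominated convergence for the negative part on the left-hand side.
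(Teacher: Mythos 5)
The paper does not prove this lemma at all; it is simply cited from Carlen--Loss \cite{CarlenLoss92}, so any proof you give is necessarily a different route from the paper's. Your strategy --- extracting the logarithmic endpoint by differentiating the family of subcritical Hardy--Littlewood--Sobolev inequalities in the parameter $\lambda$ at $\lambda = 0$ --- is a legitimate and classical method (it is essentially how Gross obtained the Gaussian log-Sobolev inequality from hypercontractivity, and a version of this argument for log-HLS does appear in the literature). The algebraic bookkeeping is correct: the elementary bound $-\log s \leq (s^{-\lambda}-1)/\lambda$, the choice $p = 2d/(2d-\lambda)$ so that $2/p + \lambda/d = 2$, the expansion $\norm{f}_p^2 = M^2 + \tfrac{\lambda}{d}\bigl(M\int f\log f - M^2\log M\bigr) + O(\lambda^2)$, and the resulting cancellation of the $M^2/\lambda$ terms all check out, as does the derived constant $C(M) = C'(d)M^2 - (M^2/d)\log M$, which agrees with what one gets by scaling $f \to f/M$ in the $M=1$ form of the inequality. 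Your truncation scheme $f_n = \min(f,n)\mathbf{1}_{\abs{x}\leq n}$ together with monotone convergence on $\set{\abs{x-y}<1}$, dominated convergence on $\set{\abs{x-y}>1}$ (trivial if that integral is infinite, since then the left side is $-\infty$), and the split of the entropy into positive and negative parts is also sound.

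There is, however, one genuine gap that must be flagged: the assertion that ``by continuity in $\lambda$ at $\lambda = 0$ \ldots $C_\lambda = 1 + \lambda C'(d) + O(\lambda^2)$.'' Continuity only gives $C_\lambda = 1 + o(1)$, which is \emph{not} enough --- the entire proof hinges on the singular $M^2/\lambda$ terms canceling, and that cancellation requires $C_\lambda - 1 = O(\lambda)$, i.e.\ one-sided differentiability of the HLS constant at the endpoint with the value $C_0 = 1$ attained exactly. A generic proof of subcritical HLS produces constants that blow up or at least do not have this precise first-order behavior near $\lambda = 0$. What makes this step work is Lieb's explicit formula for the \emph{sharp} HLS constant,
\begin{equation*}
C_{d,\lambda} = \pi^{\lambda/2}\,\frac{\Gamma\bigl(\tfrac{d-\lambda}{2}\bigr)}{\Gamma\bigl(d - \tfrac{\lambda}{2}\bigr)}\left(\frac{\Gamma(d/2)}{\Gamma(d)}\right)^{-1 + \lambda/d},
\end{equation*}
which is manifestly smooth in $\lambda$ on $[0,d)$ with $C_{d,0} = 1$. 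You should state explicitly that you are invoking Lieb's sharp constant (itself a deep theorem, proved by rearrangement and a delicate compactness argument) and compute $C'(d) = \frac{d}{d\lambda}C_{d,\lambda}\big|_{\lambda=0}$ from this formula. Without the sharp constant the argument does not close, and ``continuity'' is the wrong word for what is actually being used. With that reference supplied, the proof is complete; note also that a finiteness condition such as $\int f\log(1+\abs{x}^2)\,dx < \infty$ (present in the original Carlen--Loss statement and implicit in the paper's application, where $f$ has finite second moment) is what guarantees the left-hand side is not of the form $\infty - \infty$.
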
 
\begin{remark}
One also has for all $\delta > 0$, 
\begin{equation}
-\int\int_{\abs{x-y} < \delta}f(x)f(y) \log\abs{x-y} dxdy \leq \frac{\norm{f}_1}{d}\int_{\Real^d} f\log f dx + C(\norm{f}_1). \label{ineq:log_sob_localized}
\end{equation} 
\end{remark}
Also recall the well-known convolution inequality (see \cite{LiebLoss}): 
for all $f \in L^p(\Real^d)$, $g \in L^q(\Real^d)$ and $\K \in \wkspace{t}$ for $1 < p,q,t < \infty$ satisfying $1/p + 1/q + 1/t = 2$,
\begin{equation}
\abs{\int\int f(x)g(y)\K(x-y)dxdy} \lesssim_{p,q,t,d} \norm{f}_p\norm{g}_q \wknorm{\K}{t}. \label{ineq:GGHLS} 
\end{equation} 

As noted above for $d=2$ the admissible kernels are generally only in $BMO(\Real^2)$ and hence can grow logarithmically at infinity. 
This introduces a number of complications for the local and global well-posedness.
To begin with, in the proof of the energy dissipation inequality, one must ensure that the interaction energy of the approximate solutions converges to the interaction energy of the weak solution being constructed. However, $\K \ast f$ will be unbounded for general $f \in L^1 \cap L^\infty$ hence more care must be taken than in $d \geq 3$.  
The dual of $BMO$ is the Hardy space $\mathcal{H}^1$ \cite{BigStein}, a strict subset of $L^1$, which we define via duality,
\begin{equation}
\norm{f}_{\mathcal{H}^1} := \sup_{\K \in BMO, \norm{\K}_{BMO} = 1} \int \K f dx. \label{def:H1}
\end{equation} 
Accordingly, we have the natural analogue of H\"older's inequality \cite{BigStein}
\begin{equation}
\abs{\int \K f dx} \leq \norm{\K}_{BMO}\norm{f}_{\mathcal{H}^1}, \label{ineq:BMOHo}
\end{equation}
which in particular implies $\K \ast f \in L^\infty(\Real^d)$ whenever $\K \in BMO$ and $f \in \mathcal{H}^1$. 
The following lemma found in \cite{AzzamBedrossian11} provides sufficient conditions for $f\in L^1$ such that $f \in \mathcal{H}^1$ and a useful estimate of the 
norm that supplies the convergence of the interaction energies. 

\begin{lemma}[\cite{AzzamBedrossian11}] \label{lem:Hardy}
Let $f \in L^1 \cap L^p$ for some $p > 1$ and satisfy $\int f dx = 0$, $\mathcal{M}_1(f) < \infty$. 
Then $f \in \mathcal{H}^1$ and 
\begin{equation*}
\norm{f}_{\mathcal{H}^1} \lesssim_{d,p} \norm{f}_{p} + \mathcal{M}_1(f). 
\end{equation*}
\end{lemma}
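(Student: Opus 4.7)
The plan is to work directly from the duality definition \eqref{def:H1}: it suffices to establish
\[
\left|\int_{\Real^d} g f \, dx\right| \lesssim_{d,p} \|g\|_{BMO}\bigl(\|f\|_p + \mathcal{M}_1(f)\bigr)
\]
for arbitrary $g \in BMO(\Real^d)$. The first move is to exploit the hypothesis $\int f = 0$ to replace $g$ by $g - g_{B_1}$, where $g_{B_1}$ denotes the average of $g$ over the unit ball (well-defined since $BMO$ functions are locally integrable). This substitution alters neither the pairing nor the $BMO$ norm.

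I then partition $\Real^d = B_1 \cup \bigcup_{k\geq 1} A_k$ with $A_k = \{2^{k-1} \leq |x| < 2^k\}$, and estimate each piece separately. On $B_1$, Hölder combined with the standard $L^{p'}$-form of the John-Nirenberg inequality, $\|g - g_{B_1}\|_{L^{p'}(B_1)} \lesssim_p \|g\|_{BMO}$, gives a contribution bounded by $\|g\|_{BMO}\|f\|_p$. On each annulus $A_k$ I split
\[
g - g_{B_1} = (g - g_{B_{2^k}}) + (g_{B_{2^k}} - g_{B_1}).
\]
The telescoping estimate $|g_{B_{2^k}} - g_{B_1}| \lesssim k\|g\|_{BMO}$, combined with $\|f\|_{L^1(A_k)} \leq 2^{1-k}\mathcal{M}_1(f)$ (since $|x| \geq 2^{k-1}$ on $A_k$), yields a contribution $\lesssim k\, 2^{-k}\|g\|_{BMO}\mathcal{M}_1(f)$, and $\sum_k k 2^{-k}$ converges.

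The remaining term $\int_{A_k}(g-g_{B_{2^k}})f\, dx$ is the delicate one. I would apply Hölder with conjugate exponents $(q,q')$, taking $q' = 1 + \epsilon$ for a small $\epsilon>0$ fixed below. John-Nirenberg in $L^q$-form gives $\|g - g_{B_{2^k}}\|_{L^q(B_{2^k})} \lesssim_q 2^{kd/q}\|g\|_{BMO}$, while log-convexity of $L^r$-norms on $A_k$ yields
\[
\|f\|_{L^{q'}(A_k)} \leq \|f\|_{L^1(A_k)}^\theta \|f\|_{L^p(A_k)}^{1-\theta} \lesssim \bigl(2^{-k}\mathcal{M}_1(f)\bigr)^\theta \|f\|_p^{1-\theta},
\]
where $\theta = (p-1-\epsilon)/\bigl((p-1)(1+\epsilon)\bigr) \to 1$ as $\epsilon \to 0$. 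Multiplying, the $k$-th term is bounded by $\|g\|_{BMO}\bigl(\mathcal{M}_1(f) + \|f\|_p\bigr) \cdot 2^{k(d/q - \theta)}$. Since $d/q = d\epsilon/(1+\epsilon) \to 0$ and $\theta \to 1$ as $\epsilon \to 0$, choosing $\epsilon$ small enough (depending only on $d,p$) forces the exponent to be strictly negative, so the dyadic sum converges geometrically. The main obstacle in the argument is precisely this calibration of the Hölder/interpolation exponents to produce geometric decay; every other step is a standard BMO/duality manipulation.
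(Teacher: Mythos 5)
The paper does not prove this lemma; it is quoted without proof from \cite{AzzamBedrossian11}, so there is no in-paper argument to compare against, and I will simply assess your proposal on its own merits.

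Your proof is correct. The subtraction of $g_{B_1}$ (justified by $\int f=0$), the dyadic annular decomposition, the telescoping bound $|g_{B_{2^k}}-g_{B_1}|\lesssim k\|g\|_{BMO}$ paired with $\|f\|_{L^1(A_k)}\leq 2^{1-k}\mathcal{M}_1(f)$, and the $L^{p'}$ form of John--Nirenberg on the central ball are all standard and correctly applied. The delicate piece $\int_{A_k}(g-g_{B_{2^k}})f$ checks out as well: with $q'=1+\epsilon$, $q=(1+\epsilon)/\epsilon$, John--Nirenberg gives $\|g-g_{B_{2^k}}\|_{L^q(B_{2^k})}\lesssim_q 2^{kd\epsilon/(1+\epsilon)}\|g\|_{BMO}$ while $L^1$--$L^p$ interpolation gives $\|f\|_{L^{q'}(A_k)}\lesssim(2^{-k}\mathcal{M}_1(f))^\theta\|f\|_p^{1-\theta}$ with $\theta=(p-1-\epsilon)/((1+\epsilon)(p-1))$, and a short computation shows that $d/q-\theta<0$ is equivalent to $\epsilon<(p-1)/(d(p-1)+1)$, which is a stronger constraint than $q'<p$ and is satisfiable for every $p>1$. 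Fixing such an $\epsilon=\epsilon(d,p)$ makes the John--Nirenberg constant $C(q)$ a fixed constant depending on $d,p$ and gives geometric decay in $k$, so the dyadic sum converges; Young's inequality then turns $\mathcal{M}_1(f)^\theta\|f\|_p^{1-\theta}$ into $\mathcal{M}_1(f)+\|f\|_p$ as claimed. One minor point worth flagging is that the pairing $\int gf$ for a general unbounded $g\in BMO$ is a priori only conditionally defined; your decomposition produces absolutely convergent pieces and thus makes the pairing rigorous, which is exactly the standard way to handle this technicality in $\mathcal{H}^1$--$BMO$ duality.
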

\subsection{Statement of Results}
\begin{theorem} [Local Existence and Energy Dissipation for Convolution-type Systems] \label{thm:loc_theory}
Let $d \geq 2$, $\K$ be admissble and $u_0 \in L^1_+(\Real^d;(1 + \abs{x}^2)dx) \cap L^\infty(\Real^d)$. Then there exists a $T > 0$ and 
a weak solution $u(t)$ of \eqref{def:ADD} which satisfies $u(t) \in C([0,T];L_+^1(\Real^d;(1+\abs{x}^2)dx))\cap L^\infty((0,T) \times \Real^d))$ and $u(0) = u_0$.
Moreover, $\F(u_0) < \infty$ and $u(t)$ satisfies the energy dissipation inequality, 
\begin{align}\label{EnrDiss}
\F(u(t))+\int_0^t\int \frac{1}{u(s)}\left|A'(u(s))\nabla u(s) -u(s)\nabla\kernel\ast u(s)\right|^2dxds \leq \F(u_0).  
\end{align}
\end{theorem}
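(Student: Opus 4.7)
The plan is to build the solution by a direct mollification on $\Real^d$, per the strategy advertised in the Introduction. I regularize the initial data as $u_0^\epsilon := \mathcal{J}_\epsilon u_0$, the kernel as $\K^\epsilon := \mathcal{J}_\epsilon \K$, and the diffusion as $A^\epsilon(z) := A(z) + \epsilon z$, so that $\K^\epsilon$ is smooth and $A^\epsilon$ is strictly parabolic. Classical quasilinear parabolic theory then yields a smooth, strictly positive, rapidly decaying global solution $u^\epsilon$ of the mollified equation. The main payoff over the bounded-domain regularization of \cite{BRB10} is that every homogeneous Sobolev embedding needed in the subsequent iteration remains available.

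I then collect four uniform-in-$\epsilon$ estimates on a common interval $[0,T]$: (i) mass conservation $\norm{u^\epsilon(t)}_1 = \norm{u_0}_1$; (ii) the linear-in-time second-moment bound $\mathcal{M}_2(u^\epsilon(t)) \leq \mathcal{M}_2(u_0) + Ct$, obtained by testing against $\abs{x}^2$ and invoking $\grad\K \in \wkspace{d/(d-1)}$ from Lemma \ref{lem:PropAdmissible}(a); (iii) a short-time $L^\infty$ bound via an $L^p$-iteration in the spirit of \cite{Corrias04,SugiyamaDIE06,BedrossianIA10}, with Lemma \ref{lem:PropAdmissible}(b) controlling the drift; and most importantly (iv) the exact regularized energy identity
\begin{equation*}
\F^\epsilon(u^\epsilon(t)) + \int_0^t\!\int \frac{1}{u^\epsilon}\abs{(A^\epsilon)'(u^\epsilon)\grad u^\epsilon - u^\epsilon\grad c^\epsilon}^2 dx\,ds = \F^\epsilon(u_0^\epsilon),
\end{equation*}
derived by testing with $(\Phi^\epsilon)'(u^\epsilon) - c^\epsilon$, where $c^\epsilon := \K^\epsilon \ast u^\epsilon$ and $\F^\epsilon$ is the free energy built from $(A^\epsilon,\K^\epsilon)$. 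Estimates (i)--(iv) supply uniform $L^2$ bounds on $\grad A^\epsilon(u^\epsilon)$ and on $u^\epsilon_t$ in $L^2(0,T;\dot{H}^{-1})$ (respectively $L^2(0,T;\mathcal{V}^\star)$ when $d=2$), and tightness from (ii) lets an Aubin--Lions argument extract a subsequence $u^\epsilon \to u$ strongly in $L^p_{loc}((0,T)\times\Real^d)$ for every $p < \infty$ and weakly-$*$ in $L^\infty$. Strong convergence identifies the limit of every nonlinear term and verifies that $u$ is a weak solution in the sense of Definition \ref{def:WSRD}; time continuity into $L^1_+(\Real^d;(1+\abs{x}^2)dx)$ follows from the $u_t$ bound and the moment estimate.

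The remaining task is to upgrade the regularized identity to \eqref{EnrDiss} by passing to the $\liminf$. Lower semicontinuity of $S$ is immediate from convexity of $\Phi$, and lower semicontinuity of the dissipation integral follows by weak $L^2$ convergence of $(A^\epsilon)'(u^\epsilon)\grad u^\epsilon/\sqrt{u^\epsilon} - \sqrt{u^\epsilon}\,\grad c^\epsilon$ together with Fatou. The main technical obstacle is then the convergence of the interaction energy $\mathcal{W}(u^\epsilon) \to \mathcal{W}(u)$, which is genuinely subtle in $d = 2$ because $\K \in BMO(\Real^2)$ may grow logarithmically at infinity, so the pairing $\int u\,c\,dx$ is not absolutely convergent for an arbitrary $u \in L^1 \cap L^\infty$ with second moment.

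To close this final step I would decompose, via Fubini and the symmetry of $\K^\epsilon$,
\begin{equation*}
\mathcal{W}(u^\epsilon) - \mathcal{W}(u) = \tfrac{1}{2}\!\int (u^\epsilon - u)\,c^\epsilon\,dx + \tfrac{1}{2}\!\int (u^\epsilon - u)(\K^\epsilon \ast u)\,dx + \tfrac{1}{2}\!\int u\,(\K^\epsilon - \K)\ast u\,dx.
\end{equation*}
For the first two terms I invoke the $BMO$--$\mathcal{H}^1$ duality \eqref{ineq:BMOHo}: the factors $c^\epsilon$ and $\K^\epsilon \ast u$ have uniformly bounded $BMO$ seminorms, while the difference $u^\epsilon - u$ has vanishing integral (by mass conservation), uniformly bounded first moment, and a uniform $L^p$ bound, so Lemma \ref{lem:Hardy} delivers $\norm{u^\epsilon - u}_{\mathcal{H}^1} \to 0$ along the strongly convergent subsequence. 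For the third term, $(\K^\epsilon - \K)\ast u = \mathcal{J}_\epsilon(\K\ast u) - \K\ast u \to 0$ in $L^p_{loc}$, and splitting into near-field and far-field contributions and using the second-moment bound on $u$ against the at-most-logarithmic growth of $\K\ast u$ yields vanishing in the limit. Combining these three pieces promotes the regularized identity to \eqref{EnrDiss} and also verifies $\F(u_0) < \infty$ a posteriori.
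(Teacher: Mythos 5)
Your overall architecture --- mollify directly on $\Real^d$, obtain uniform-in-$\epsilon$ $L^\infty$ and moment bounds, extract a limit via Aubin--Lions with tightness from the second moment, and pass to the energy dissipation inequality by treating the $d = 2$ interaction energy with the $BMO$--$\mathcal{H}^1$ duality of Lemma \ref{lem:Hardy} --- is the paper's strategy, and your identification of the $d=2$ interaction-energy convergence as the crux, together with the three-term decomposition and the appeal to $\norm{u^\epsilon - u}_{\mathcal{H}^1}\to 0$, matches the paper's argument.

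There is, however, a genuine gap at the very first step. You assert that ``classical quasilinear parabolic theory then yields a smooth, strictly positive, rapidly decaying global solution $u^\epsilon$'' of the mollified equation. But \eqref{def:RAE} is \emph{not} a quasilinear parabolic equation in any classical sense: the drift $\nabla\K^\epsilon \ast u^\epsilon$ is a nonlocal functional of the unknown, so the regularized system is an integro-parabolic equation to which Ladyzhenskaya--Solonnikov--Uraltseva/Lieberman-type theorems do not apply off the shelf. This is exactly where the paper invests its main technical effort (Proposition \ref{prop:LERADD} together with Lemma \ref{lem:AprioriBoundLP}): freeze the drift at a given $\tilde u$, solve the resulting genuinely quasilinear problem \eqref{eq:LinearProblem}, derive a priori estimates --- including a delicate $C([0,T];\dot H^{-1})$ continuity estimate for the solution map, proved by the same monotonicity argument that underlies uniqueness --- and close with Schauder's fixed point theorem in a weighted space $V\subset L^2(\Real^d)$ whose compact embedding substitutes for Rellich on bounded domains. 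Without some such fixed-point or Picard iteration the object $u^\epsilon$ you base the rest of the proof on is not known to exist. Two smaller ordering issues in your list of a priori bounds: the uniform second-moment estimate (ii) cannot be obtained before the uniform $L^\infty$ bound (iii), since the Virial term requires $\norm{\nabla\K^\epsilon\ast u^\epsilon}_\infty$, which is controlled through higher $L^p$ norms and not through mass and $\grad\K\in\wkspace{d/(d-1)}$ alone; and extracting the uniform $L^2(0,T;L^2)$ bound on $\nabla A^\epsilon(u^\epsilon)$ from the dissipation integral (iv) again presupposes the $L^\infty$ bound to handle the cross terms, which is why the paper instead obtains it by adapting the direct estimate of Lemma \ref{lem:AprioriBoundLP}(iii).
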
 
For a rigorous interpretation of the free energy dissipation, 
\begin{equation*}
\int D[u] dx := \int \frac{1}{u}\abs{A^\prime(u)\grad u - u\grad \K \ast u}^2 dx, 
\end{equation*}
see for example \cite{CarrilloEntDiss01}.
As mentioned above, the local existence for general models \eqref{def:ADD} for $d \geq 3$ was proved in \cite{BRB10}, which 
extended the existence results of \cite{SugiyamaDIE06,BlanchetEJDE06,BertozziSlepcev10}.
We present an alternative which is specialized to treating $\Real^d$ that has certain advantages. In particular we treat $\Real^2$.  
We also prove the corresponding theorem for variable-coefficient Patlak-Keller-Segel systems. 

\begin{theorem} [Local Existence and Energy Dissipation for Variable-Coefficient Systems] \label{thm:loc_theory_variable}
Let $d \geq 2$, $a(x) \in C^1$ be strictly positive such that $a + \abs{\grad a} \in L^\infty$ and let $\gamma(x) \in L^\infty$ be non-negative. In $d = 2$, further suppose that $\gamma(x)$ is strictly positive. 
If $u_0 \in L^1_+(\Real^d;(1 + \abs{x}^2)dx) \cap L^\infty(\Real^d)$ then
there exists a $T > 0$ and 
a weak solution $u(t)$ of \eqref{def:InhKS} which satisfies $u(t) \in C([0,T];L_+^1(\Real^d;(1+\abs{x}^2)dx))\cap L^\infty((0,T) \times \Real^d))$ and $u(0) = u_0$.
Moreover, $\F(u_0) < \infty$ and $u(t)$ satisfies the energy dissipation inequality
\begin{align}\label{EnrDiss2}
\F(u(t))+\int_0^t\int \frac{1}{u(s)}\left|A'(u(s))\nabla u(s) -u(s)\nabla c(s)\right|^2dxdt \leq \F(u_0).  
\end{align}
\end{theorem}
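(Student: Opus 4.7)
The strategy mirrors that of Theorem \ref{thm:loc_theory}: regularize the problem, construct smooth approximate solutions, derive estimates uniform in the regularization parameter, and pass to the limit. The only structural change is that the convolution $\K \ast u$ is replaced by the unique strong solution $c$ of $-\nabla\cdot(a(x)\nabla c) + \gamma(x) c = u$ vanishing at infinity, and Lemma \ref{lem:PropAdmissible} is supplanted by the variable-coefficient elliptic estimates advertised in the Appendix (bounds on $c, \nabla c, D^2 c$ in terms of $u$ that mimic those available for admissible convolution kernels). The structural dissipation argument, the moment computations, and the $L^p$-iteration step are all insensitive to this change provided those elliptic estimates replace the corresponding Lebesgue/weak-Lebesgue bounds for the Newtonian potential.

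Concretely, I would mollify the initial datum $u_0^\epsilon := \mathcal{J}_\epsilon u_0$ and replace $A$ by $A_\epsilon(z) := A(z) + \epsilon z$ to make the diffusion uniformly non-degenerate. For each fixed $\epsilon > 0$, a classical nonnegative solution $u^\epsilon$ on some $[0,T^\epsilon]$ is built by a Banach fixed-point scheme: given $v$ in a ball of $L^\infty_{t,x}$, solve the linear elliptic problem for $c[v]$ via Lax--Milgram, then solve the linear uniformly parabolic equation $u_t = \Delta A_\epsilon(u) - \nabla\cdot(u\nabla c[v])$ for $u$, and show this map is a contraction. Next, on a common interval $[0,T]$ I derive bounds uniform in $\epsilon$: mass conservation is immediate; the second moment satisfies
\begin{equation*}
\tfrac{d}{dt}\mathcal{M}_2(u^\epsilon) \lesssim \mathcal{M}_2(u^\epsilon) + \|u_0\|_1 \cdot \sup_{t\le T} \|x\cdot \nabla c^\epsilon(t)\|_\infty + \|A(u^\epsilon)\|_1,
\end{equation*}
which closes by Gr\"onwall once the Appendix estimates are used to control $\nabla c^\epsilon$; an $L^\infty$ bound is then produced by an Alikakos--Moser iteration, exploiting the Calder\'on--Zygmund theory for the divergence-form operator to bound $\|D^2 c^\epsilon\|_p$ by $\|u^\epsilon\|_p$. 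Testing the regularized equation against $\Phi'_\epsilon(u^\epsilon) - c^\epsilon$ gives the approximate dissipation identity and, by convexity, establishes $\F(u_0) < \infty$.

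With these uniform bounds, Aubin--Lions compactness in time together with the uniform moment bound upgrades subsequential weak convergence of $u^\epsilon$ to strong convergence in $L^p((0,T)\times \Real^d)$ for $1 \le p < \infty$, and continuous dependence of the elliptic solution yields strong convergence of $c^\epsilon$ and $\nabla c^\epsilon$, which suffices to pass to the limit in $u^\epsilon \nabla c^\epsilon$ and $\nabla A_\epsilon(u^\epsilon)$. The energy dissipation inequality \eqref{EnrDiss2} then follows from weak lower semicontinuity of the dissipation functional against strong convergence of $u^\epsilon$ and $c^\epsilon$. The main obstacle, as in Theorem \ref{thm:loc_theory}, is the case $d = 2$, where the Green's function for the variable-coefficient operator fails to be integrable in the absence of a zeroth-order term; this is precisely why we impose $\gamma > 0$ strictly, which makes $-\nabla\cdot(a\nabla) + \gamma$ coercive on $L^2(\Real^2)$ and produces decay of $c^\epsilon$ at infinity. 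One must combine this decay with Lemma \ref{lem:Hardy} (applied to $u^\epsilon - \text{const}\cdot\eta$ with matching total mass, after subtracting a fixed test profile) to obtain convergence of the interaction energy $\int u^\epsilon c^\epsilon \, dx$ and to justify the test-function formulation against $\mathcal{V}^\star$; once the Appendix provides the requisite elliptic estimates in $\mathcal{V}$ and $L^\infty$, the passage to the limit in 2D proceeds exactly as in the convolution case.
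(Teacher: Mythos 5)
Your overall strategy is sound and matches the paper's: regularize, build approximate solutions by a fixed-point argument, derive uniform-in-$\epsilon$ $L^\infty$, second-moment, and $H^1$-type estimates from the Appendix elliptic lemmas, pass to the limit via Aubin--Lions, and obtain energy dissipation by lower semicontinuity. Two points, however, deviate from the paper's argument in ways worth flagging.

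First, you propose to close the approximate-solution construction by showing that the map $v \mapsto u$ (with $c[v]$ solving the elliptic problem and $u$ solving the parabolic equation) is a \emph{contraction}. The parabolic equation $u_t = \Delta A_\epsilon(u) - \nabla\cdot(u\nabla c[v])$ is quasilinear in $u$ through $A_\epsilon(u)$, and the degeneracy/nonlinearity of $A$ makes a Lipschitz contraction estimate far from immediate; the stability estimate the paper derives for this map is sublinear in the distance (involving a power $\|v_n - v\|_2^{2/3}$), which gives continuity but not contraction. The paper therefore proceeds via the Schauder fixed point theorem on a compact convex subset of $L^2(0,T;L^2)$, obtaining compactness from Aubin--Lions and continuity from the $\dot{H}^{-1}$ stability computation. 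As written, your contraction claim is a gap: if you want to keep Banach you would need to justify a genuine Lipschitz bound, which is unlikely here; otherwise you should switch to Schauder.

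Second, your treatment of the $d=2$ interaction energy invokes Lemma \ref{lem:Hardy} and an $\mathcal{H}^1$--$BMO$ duality argument, as in the convolution case of Theorem \ref{thm:loc_theory}. This is not needed for the variable-coefficient system, and in fact misses the reason the hypothesis $\gamma > 0$ in $d=2$ is imposed: with $\gamma$ strictly positive, the elliptic estimate \eqref{ineq:InhomogC} gives $\norm{c}_p \lesssim \norm{u}_p$ for all $1<p<\infty$, and combined with Lemma \ref{lem:inhom_bds_CZ} and Morrey's inequality this immediately yields $c \in L^\infty$ with quantitative bounds and convergence $\norm{c^\epsilon - c}_\infty \to 0$. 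The interaction energy convergence then follows directly from the simple estimate $\norm{\mollify{\reg{u}}\reg{c} - uc}_{L^1} \leq \norm{\mollify{\reg{u}}}_1\norm{\reg{c}-c}_\infty + \norm{\reg{c}}_\infty\norm{\mollify{\reg u}-u}_1$, with no appeal to Hardy spaces. Your proposed construction of a zero-mean modification of $u^\epsilon$ to feed into Lemma \ref{lem:Hardy} is both unnecessary and somewhat misapplied here, since $c$ is not a $BMO$-kernel convolution in the variable-coefficient case; the Hardy-space machinery is precisely what the strict positivity of $\gamma$ lets you avoid.
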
   
Uniqueness for convolution-type systems is proven in \cite{BRB10} for $d \geq 3$; the same proof works for \eqref{def:InhKS} using the elliptic estimates found in the Appendix.  
More recent work undertaken by J. Azzam and one of the authors \cite{AzzamBedrossian11} proves uniqueness for the $d = 2$ case.  
For completeness, we state a continuation theorem proved in \cite{BRB10}, which extends previous theorems stated in \cite{JagerLuckhaus92,CalvezCarrillo06,Blanchet09}. The extension to cover \eqref{def:InhKS} is straightforward and is briefly discussed below in \S\ref{sec:ContInhKS}. 
 
\begin{theorem}[Continuation \cite{BRB10}] \label{thm:Continuation}
The weak solution to \eqref{def:ADD} or \eqref{def:InhKS} has a maximal time interval of existence $T_\star = T_\star(u_0)$ and if 
\begin{equation}\label{cond:equint}
\lim_{k \rightarrow \infty}\sup_{t \in [0,T_\star)} \norm{(u-k)_+}_{\frac{2-m}{2-m^\star}} = 0,  
\end{equation}
then $T_\star = \infty$ and $u(t) \in L^\infty( (0,\infty) \times \Real^d)$. 
Here $m$ is such that $1 \leq m \leq m^\star$ and $\liminf_{z \rightarrow \infty}A^{\prime}(z)z^{1-m} > 0$. 
In particular, if $T_\star < \infty$, then for all $p > (2-m)/(2-m^\star)$, 
\begin{equation*}
\lim_{t \nearrow T_\star} \norm{u}_{p} = \infty.
\end{equation*}
\end{theorem}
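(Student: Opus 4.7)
The plan is to establish the statement as a blow-up criterion in disguise: assuming $T_\star < \infty$, I would show that \eqref{cond:equint} forces a uniform $L^\infty$ bound on $u$ over $[0,T_\star) \times \Real^d$, which combined with local existence (Theorems \ref{thm:loc_theory} and \ref{thm:loc_theory_variable}) contradicts maximality. The vehicle is an $L^p$ iteration in the style of Alikakos/Moser, performed on the excess function $(u - k)_+$ for $k$ large, where the free parameter $k$ absorbs the role of smallness supplied by \eqref{cond:equint}.

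First I would derive the $L^p$ energy estimate by testing the weak formulation against (a suitable approximation of) $\phi = (u-k)_+^{p-1}$, which is admissible since $u$ is bounded on its existence interval by local theory. The identity
\begin{align*}
\frac{1}{p}\frac{d}{dt}\int (u-k)_+^p \, dx + (p-1)\int (u-k)_+^{p-2} A'(u) |\nabla (u-k)_+|^2 \, dx = (p-1)\int u \, (u-k)_+^{p-2}\, \nabla c \cdot \nabla (u-k)_+ \, dx
\end{align*}
holds on $\{u > k\}$. Using the hypothesis $A'(z) \gtrsim z^{m-1}$ for $z$ large together with $u \leq k + (u-k)_+$, the diffusion term controls $\norm{\nabla (u-k)_+^{\alpha}}_2^2$ with $\alpha = (p+m-1)/2$, and Sobolev embedding promotes this to $\norm{(u-k)_+^{\alpha}}_{2^{\star}}^{2}$.

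For the interaction term, I would integrate by parts to move $\nabla$ off $(u-k)_+$, yielding expressions involving $\Delta c$ (or $D^2 c$) times powers of $(u-k)_+$. The Calder\'on-Zygmund bound in Lemma \ref{lem:PropAdmissible}(b)---with its sharp $C(p) \lesssim p$ behavior---estimates the convolution factor; for \eqref{def:InhKS} the analogous elliptic estimates from the Appendix replace it verbatim. The delicate step is then a H\"older interpolation arranged so that the interaction term is dominated by a product of $\norm{(u-k)_+}_{(2-m)/(2-m^\star)}$ raised to a positive power and the exact Sobolev quantity $\norm{(u-k)_+^{\alpha}}_{2^\star}^{2}$ appearing on the left. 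The precise exponent $(2-m)/(2-m^\star)$ is dictated by the scaling that makes the interaction and diffusion terms homogeneous; this is exactly the critical Lebesgue scaling identified in Definition \ref{def:criticality}.

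With this setup in place, \eqref{cond:equint} allows one to pick $k$ so large (uniformly in $t \in [0,T_\star)$) that the prefactor $\norm{(u-k)_+}_{(2-m)/(2-m^\star)}$ is smaller than any prescribed $\varepsilon$, so the Sobolev term absorbs the interaction into the diffusion, leaving a differential inequality of the form $\tfrac{d}{dt}\norm{(u-k)_+}_p^p \lesssim \text{l.o.t.}$ that keeps $\norm{(u-k)_+}_p$ bounded uniformly on $[0,T_\star)$ in terms of its value at $t=0$. Iterating over $p_n = 2^n p_0$ with the bookkeeping that the Alikakos constants grow only polynomially in $p$ (thanks to the $C(p) \lesssim p$ rate) gives a uniform $L^\infty$ bound. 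I would then restart local existence from data near $T_\star$ to extend past $T_\star$, contradicting maximality and proving $T_\star = \infty$ together with $u \in L^\infty((0,\infty) \times \Real^d)$.

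The main obstacle I expect is Step 3--4: arranging the H\"older decomposition of the interaction term so that the critical norm $\norm{(u-k)_+}_{(2-m)/(2-m^\star)}$ appears as a multiplicative factor that can be made small, while the remaining factor is exactly the coercive $L^{2^\star}$ quantity produced by Sobolev. This matching is the reason the particular exponent in \eqref{cond:equint} appears, and getting it cleanly---especially in the endpoint $m=m^\star$ where the exponent is $\infty$---requires careful tracking of the $p$-dependence of all constants so that the iteration in the last step actually converges.
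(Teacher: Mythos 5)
Your outline captures the essential mechanism of the paper's continuation argument---a level-set $L^p$ estimate on $(u-k)_+$, elliptic/Calder\'on--Zygmund control of the advection, a Gagliardo--Nirenberg interpolation chosen so that the critical norm $\norm{(u-k)_+}_{(2-m)/(2-m^\star)}$ appears as a multiplicative prefactor (this is precisely the ``better Gagliardo--Nirenberg inequality'' singled out in \S\ref{sec:ContInhKS}), and the smallness from \eqref{cond:equint} absorbing the advective contribution into the dissipation. For \eqref{def:InhKS} with $\gamma$ merely nonnegative, your remark that the lower-order term from Lemma \ref{lem:HomogGrad} folds into the pre-existing lower-order terms is exactly how the paper handles it.

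The genuine deviation is your final step: a monolithic Alikakos iteration $p_n = 2^n p_0$ directly to $L^\infty$, in which you intend to absorb the interaction into the diffusion at \emph{every} scale $p_n$ via \eqref{cond:equint}. This does not obviously close. With the scaling $\alpha_1(p+1)/2 = 1$ in Lemma \ref{lem:GNS}, the critical prefactor has exponent $\alpha_2(p+1) = 2-m$, which is $p$-independent; but the Calder\'on--Zygmund and Gagliardo--Nirenberg constants grow like $p$, so the quantity you need to make small is of order $p \cdot \norm{(u-k)_+}_{(2-m)/(2-m^\star)}^{2-m}$. A single $k$, chosen once, cannot defeat this across the full sequence $p_n \to \infty$. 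The paper avoids the problem by a two-step argument (cf.\ the proof of Lemma \ref{LinftyG}, to which \S\ref{sec:ContInhKS} refers): use the absorption mechanism only up to a fixed $p > d$, deduce $\grad c \in L^\infty_{t,x}$ via Morrey's inequality, and then invoke the Iteration Lemma \ref{dynamic}, which handles the passage to $L^\infty$ with $\grad c$ already bounded so that no further absorption of the singular advection is needed at high $p$. You should insert that pivot: once a single sufficiently large $L^p$ bound is available, switch from the critical absorption estimate to the $L^\infty(\grad c)$/Iteration Lemma route rather than running the absorption all the way up.
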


One of the primary tools in the proofs of Theorems \ref{thm:loc_theory}-\ref{thm:Continuation} is the use of Alikakos iteration methods commonly used in the study of these PDEs as for example \cite{JagerLuckhaus92,Kowalczyk05,CalvezCarrillo06,BRB10,BedrossianIA10}. 
These methods are fundamentally perturbative in nature (as are the related methods of \cite{PerthameVasseur10,BlanchetDEF10}), depending on relatively crude Gagliardo-Nirenberg inequalities to overpower the nonlinear aggregation with diffusion only in certain regimes. 
In subcritical regimes this is sufficient and these methods prove global existence and uniform boundedness in $L^\infty$, as in \cite{Kowalczyk05,CalvezCarrillo06}. 
In the critical and supercritical cases, one can prove the same provided that the initial condition is small in the corresponding critical norm and that the nonlinear diffusion compares favorably with the homogeneous diffusion even at low densities.
As shown in for example \cite{Corrias04,SugiyamaDIE06,PerthameVasseur10,BedrossianIA10}, stronger decay estimates may also be deduced using various refinements of similar iteration methods. 

\begin{theorem}[Subcritical and Small Data Theory] \label{thm:GBDs}
Let $u_0 \in L_+^1(\Real^d; (1+\abs{x}^2)dx) \cap L^\infty(\Real^d)$ and let $u(t)$ be the local-in-time weak solution to \eqref{def:ADD} or \eqref{def:InhKS} with $u(0) = u_0$.
\begin{itemize}
\item[(i)] (subcritical)
Suppose that for $z$ sufficiently large there exists some $\delta > 0$ such that 
\begin{equation}
A^\prime(z) \geq \delta z^{m-1},  \label{cond:Aprime_GBDs2}
\end{equation}
for $m > m^\star$. Then the solution is global and $u(t) \in L^\infty( (0,\infty) \times \Real^d)$.

\item[(ii)] (small data in critical and supercritical cases)  
Suppose instead \eqref{cond:Aprime_GBDs2} is satisfied for all $z > 0$ for $1 \leq m \leq m^\star$. 
If $u(t)$ solves \eqref{def:InhKS} and $m < m^\star$, then assume $\gamma(x)$ is strictly positive. Then 
there exists a constant $\epsilon_0 = \epsilon_0(\delta,m,d) > 0$ such that if 
\begin{equation*}
\norm{u_0}_{\frac{2-m}{2-m^\star}} < \epsilon_0, 
\end{equation*} 
then the solution is global ($T_\star(u_0) = \infty$) and  $u(t) \in L^\infty( (0,\infty) \times \Real^d)$. 
Furthermore, if $\gamma(x)$ is not strictly positive and $d\geq 3$ then there exists a constant $\epsilon_0 = \epsilon_0(\delta,m,d) > 0$ such that if 
\begin{equation*}
\max\left(\norm{u_0}_1,\norm{u_0}_{\frac{2-m}{2-m^\star}}\right) < \epsilon_0, 
\end{equation*} 
then the solution is global and  $u(t) \in L^\infty( (0,\infty) \times \Real^d)$.
\end{itemize}
\end{theorem}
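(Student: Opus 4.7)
The plan is to combine the continuation theorem (Theorem \ref{thm:Continuation}) with an Alikakos-type $L^p$ iteration carried out on the approximate solutions constructed in Theorems \ref{thm:loc_theory}--\ref{thm:loc_theory_variable}. It suffices to produce an a priori bound $\sup_{t\in[0,T]}\norm{u(t)}_{p_0} \leq C$ for some $p_0 > (2-m)/(2-m^\star)$, uniformly in $T$: Theorem \ref{thm:Continuation} then yields $T_\star = \infty$, and iterating the same estimate along a doubling sequence $p_k = 2^k p_0$ with constants that remain uniformly bounded upgrades the bound to $u \in L^\infty((0,\infty)\times \Real^d)$, as in \cite{Corrias04,SugiyamaDIE06,BedrossianIA10}.

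Multiplying \eqref{def:ADD} or \eqref{def:InhKS} by $p u^{p-1}$ and integrating by parts yields
\begin{equation*}
\frac{d}{dt}\norm{u}_p^p + p(p-1)\int A'(u)\, u^{p-2}\,\abs{\grad u}^2\, dx = (p-1)\int u^p(-\Delta c)\, dx.
\end{equation*}
The hypothesis $A'(z) \geq \delta z^{m-1}$ turns the diffusion integral into a positive constant multiple of $\norm{\grad u^{(p+m-1)/2}}_2^2$. For the aggregation integral, part (b) of Lemma \ref{lem:PropAdmissible} (for \eqref{def:ADD}), or the elliptic estimates collected in the Appendix (for \eqref{def:InhKS}), supplies $\norm{-\Delta c}_r \lesssim \norm{u}_r$ plus lower-order terms controlled by the conserved mass, so H\"older gives $(p-1)\int u^p(-\Delta c) \lesssim_p \norm{u}_{p+1}^{p+1}$.

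It remains to absorb $\norm{u}_{p+1}^{p+1}$ into $\norm{\grad u^{(p+m-1)/2}}_2^2$ via Gagliardo-Nirenberg applied to $v = u^{(p+m-1)/2}$. A scaling calculation produces an inequality of the form
\begin{equation*}
\norm{u}_{p+1}^{p+1} \leq C_{p,d}\,\norm{\grad u^{(p+m-1)/2}}_2^{2\alpha_p}\, \norm{u}_{(2-m)/(2-m^\star)}^{\beta_p},
\end{equation*}
in which the critical Lebesgue space $L^{(2-m)/(2-m^\star)}$ is the unique one compatible with the scaling heuristic behind Definition \ref{def:criticality}. In the subcritical regime $m > m^\star$ one finds $\alpha_p < 1$, and Young's inequality absorbs the aggregation into the diffusion unconditionally; the intermediate norm is then controlled by interpolation between the conserved $L^1$ mass and the previous-step $L^{p_{k-1}}$ bound, closing the induction. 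In the critical and supercritical regimes the GN scaling saturates and absorption demands that $C_{p,d}\,\norm{u(t)}_{(2-m)/(2-m^\star)}^{\beta_p}$ stay smaller than a fixed multiple of $\delta$, which is ensured at $t = 0$ by the smallness hypothesis.

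The remaining task in part (ii) is to propagate the critical-norm smallness for all $t \in [0,T_\star)$: this is done by a continuity-in-time bootstrap, applying the same differential inequality at an index just above the critical exponent (or invoking mass conservation directly when the critical exponent equals $1$) to show that $\norm{u(t)}_{(2-m)/(2-m^\star)}$ cannot escape a small ball if it starts in one, permitting Alikakos iteration uniformly in time. The main obstacle---and the reason for the dichotomy in part (ii) regarding $\gamma(x)$---is that when $\gamma(x)$ is not strictly positive the elliptic estimate $\norm{-\Delta c}_r \lesssim \norm{u}_r$ picks up a low-frequency correction proportional to $\norm{u}_1$; in $d \geq 3$ this is repaired by the supplementary smallness $\norm{u_0}_1 < \epsilon_0$, but in $d=2$ the failure of $\dot{H}^1 \hookrightarrow L^\infty$ makes this compensation inadequate, forcing the strict positivity assumption on $\gamma$ in the plane.
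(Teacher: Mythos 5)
Your proposal follows essentially the same strategy as the paper: an $L^p$ differential inequality that turns $A'(z)\geq \delta z^{m-1}$ into a coercive $\|\grad u^{(p+m-1)/2}\|_2^2$ term, an elliptic/Calder\'on--Zygmund bound on $\Delta c$, a scaling-saturated Gagliardo--Nirenberg estimate with the controlling norm $L^{\overline{q}}$ where $\overline{q}=\max\left((2-m)/(2-m^\star),1\right)$, a continuity bootstrap to propagate critical-norm smallness, and a final $L^\infty$ upgrade; and you correctly identify that the $\gamma$-not-strictly-positive case introduces a lower-order $\|u\|_1$ term in the elliptic estimate that must be controlled by additional mass smallness. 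Three points to tighten. First, to propagate the critical-norm smallness when $\overline{q}>1$ you must run the differential inequality \emph{at} $p=\overline{q}$ (not ``just above'' it); the inequality at a strictly higher index only controls $\|u\|_p$ and cannot close the bootstrap on $\|u\|_{\overline{q}}$, whereas at $p=\overline{q}$ itself the sign of the right-hand side is determined by $\|u\|_{\overline{q}}$ and so decreases monotonically once it is small. Second, for convolution-type systems with $m^\star<2-2/d$ the paper does not use the Calder\'on--Zygmund bound of Lemma~\ref{lem:PropAdmissible}~(b) but rather the weak-type convolution inequality \eqref{ineq:GGHLS} with $D^2\K\in L^{\beta,\infty}$, and the resulting Lebesgue index in the Gagliardo--Nirenberg step is $\alpha p$ given by \eqref{eq:alpha2}, which coincides with $p+1$ only in the Newtonian case $m^\star=2-2/d$; your presentation conflates the two. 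Third, the paper obtains the $L^\infty$ bound not by explicit doubling-exponent Alikakos iteration but by bounding $\grad c$ in $L^\infty$ via Morrey from the uniform $L^p$ estimates and invoking Lemma~\ref{dynamic}; these routes are morally equivalent but worth distinguishing. None of these change the conclusion.
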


\begin{remark}
Part \textit{(i)} follows easily from the techniques used to prove the Theorem \ref{thm:Continuation} found in \cite{Kowalczyk05,CalvezCarrillo06,BRB10} and will not be proved here. However, \textit{(ii)} is not so immediate, especially for the variable-coefficient system \eqref{def:InhKS} when $\gamma(x)$ is not strictly positive. 
\end{remark}
\vspace{3pt}
\begin{remark}
One can check that if $\K = \N$ and $A(u) = u^{m}$ then if $1 \leq m \leq m^\star$, \eqref{def:ADD} has a scaling symmetry which leaves the norm $L^{(2-m)/(2-m^\star)}$ invariant. It is in this sense that the $L^{(2-m)/(2-m^\star)}$ norm is critical. Hence, even when $m < m^\star$, part \textit{(ii)} is still a small data \emph{critical} result, which is expected. However the proof is more difficult in this case, as \eqref{def:ADD} has no controlled quantity which controls the $L^{(2-m)/(2-m^\star)}$ a priori.  
\end{remark}
\vspace{3pt}
\begin{remark}
We are unsure if the additional requirement on the mass in \textit{(ii)} when $\gamma(x)$ is not strictly positive in supercritical problems is necessary. 
For the Alikakos iteration methods to work, the singularity of the advective term is quantified in terms of the elliptic estimates of Lemmas \ref{lem:HomogGrad} and \ref{lem:inhom_bds_CZ} in the Appendix. In order to prove part \textit{(ii)} for supercritical cases $m < m^\star$, one must use these estimates in a bootstrap argument to also control the critical norm $L^{\frac{2-m}{2-m^\star}}$; however, Lemma \ref{lem:HomogGrad} introduces a `subcritical' lower-order term that seems to require the additional assumption in order to control. 
\end{remark}
\vspace{3pt}
For critical problems, deducing global bounds and decay estimates for larger data requires fully non-pertubative techniques that depend heavily on the energy dissipation inequalities \eqref{EnrDiss} and \eqref{EnrDiss2}. 
The use of sharp functional inequalities to determine when mixed-sign energies such as \eqref{def:F} are coercive is the classical standard method of treating large data and determining the sharp critical mass, for example for the focusing nonlinear Schr\"odinger equations, marginal unstable thin film equations and, of course, PKS \cite{Weinstein83,Witelski04,KillipVisanClay,BlanchetEJDE06,CalvezCarrillo06,Blanchet09,BRB10}.
Our previous work of \cite{BRB10} treated the case $m^\star > 1$ for critical convolution-type systems in $\Real^d$, and in this work 
we complete the $m^\star = 1$ case, for example, now covering the variants of the classical Patlak-Keller-Segel system in $\Real^2$.
For Patlak-Keller-Segel, the proof that the critical mass is sharp follows easily from a standard Virial argument (see e.g. \cite{BlanchetEJDE06,SugiyamaDIE06,SugiyamaDIE07,Blanchet09,BRB10} or the more classical \cite{Nagai95}), which can be modified in a straightforward manner to treat more general problems \cite{BRB10}.    
 The corresponding program for the variable-coefficient systems \eqref{def:InhKS} is a more difficult problem, completed by I. Kim and one of the authors elsewhere \cite{BedrossianKim10}. 
We also stress that \textit{(ii)} is already known, e.g. \cite{BlanchetEJDE06,Blanchet09}, but we restate and prove it to provide comparison with the other problems that do not have scaling symmetries. 

\begin{theorem}[Global Existence in $\Real^d$, Critical case, $m^\star = 1$] \label{thm:GWP} 
Let $A,\K$ be admissible and suppose $u_0 \in L_+^1(\Real^d; (1+\abs{x}^2)dx) \cap L^\infty(\Real^d)$ and let $u(t)$ be the local-in-time weak solution to 
\eqref{def:ADD} with $u(0) = u_0$ and define $M = \norm{u_0}_1$. 
\begin{itemize}
\item[(i)] Suppose the problem is critical. Then there exists a critical mass $M_c$ such that if $M < M_c$ then $u(t)$ exists globally. The estimate of $M_c$ is given below by \eqref{def:MC}. If $\K$ is bounded from below and 
\begin{equation*}
\int_0^1 A^\prime(z)z^{-1}dz  < \infty, 
\end{equation*}
then we may additionally assert $u(t) \in L^\infty( (0,\infty) \times \Real^d)$.   
\item[(ii)] Suppose $m^\star = 2-2/d$, $A(u) = u^{2-2/d}$ and $\mathcal{\K} = \mathcal{N}$, the Newtonian potential.  
Furthermore, suppose that $M < M_c$. Then $u(t)$ exists globally and satisfies
\begin{equation*}
\norm{u(t)}_\infty \lesssim (1 + t)^{-1}.
\end{equation*} 
\end{itemize}
\end{theorem}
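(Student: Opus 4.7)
The plan for \textit{(i)} is to leverage the free energy dissipation inequality \eqref{EnrDiss} together with the logarithmic Hardy-Littlewood-Sobolev inequality (Lemma \ref{lem:log_sob}) to bound $\int u(t)\log u(t)\,dx$ uniformly in time. The critical assumption $m^\star = 1$ combined with Definition \ref{def:criticality} gives $\liminf_{z\to\infty} A'(z) = \delta \in (0,\infty)$, so for any $\varepsilon > 0$ and $z > k_\varepsilon$ large, $A'(z) \geq \delta - \varepsilon$. Integrating $\Phi''(z) = A'(z)/z$ twice yields $\Phi(z) \geq (\delta-\varepsilon) z\log z - Cz$ for $z > k_\varepsilon$, which provides a lower bound $S(u) \geq (\delta-\varepsilon)\int_{\{u>k_\varepsilon\}} u\log u - C(M)$. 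On the other hand, admissibility and $m^\star = 1$ imply that $\K$ has at most a logarithmic singularity at the origin, $k(r) \sim -\alpha\log r$ as $r\to 0$ for some $\alpha > 0$. Splitting $\K$ into the singular part near $0$ and a (BMO) tail, applying the localized log-HLS inequality \eqref{ineq:log_sob_localized} to the singular contribution, and using Lemma \ref{lem:Hardy} together with \eqref{ineq:BMOHo} to control the BMO contribution by $\mathcal{M}_1(u)$ and an $L^p$ norm, we obtain
\begin{equation*}
\mathcal{W}(u) \leq \frac{\alpha M}{2d}\int u\log u + C(M,\alpha,\mathcal{M}_2(u_0)).
\end{equation*}

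Inserting these into $\F(u(t)) \leq \F(u_0)$ and rearranging yields
\begin{equation*}
\left(\delta - \varepsilon - \frac{\alpha M}{2d}\right)\int u(t)\log u(t)\,dx \leq C(u_0,M,\alpha,\varepsilon),
\end{equation*}
so taking $\varepsilon \to 0$ gives the critical mass $M_c = 2d\delta/\alpha$, and for $M < M_c$ the quantity $\int u(t)\log u(t)\,dx$ is uniformly bounded. By de la Vall\'ee Poussin this implies equi-integrability, $\sup_t \norm{(u(t)-k)_+}_1 \lesssim (\log k)^{-1} \to 0$ as $k\to\infty$. Since $m = m^\star = 1$ gives $(2-m)/(2-m^\star) = 1$, Theorem \ref{thm:Continuation} yields $T_\star = \infty$. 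For the additional $L^\infty$ assertion, the hypothesis $\int_0^1 A'(z)/z\,dz < \infty$ ensures $\Phi'$ is bounded near $0$ (so $\Phi$ is globally bounded below), while $\K$ bounded below cleanly controls $\mathcal{W}(u)$ without localization; these permit an Alikakos iteration of $L^p$ bounds, initialized from the $L^1\log L^1$ estimate, producing $u\in L^\infty((0,\infty)\times\Real^d)$.

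For \textit{(ii)} we use the classical route for PKS with matching diffusion. With $\K = \N$ and $A(u)=u^{2-2/d}$, direct computation gives $\Phi(z) = \tfrac{d}{d-1} z^{2-2/d} - 2z$, so $S(u) = \tfrac{d}{d-1}\norm{u}_{2-2/d}^{2-2/d} - 2M$. Applying HLS with exponent $d-2$ followed by H\"older interpolation $\norm{u}_{2d/(d+2)} \leq M^{1/d}\norm{u}_{2-2/d}^{(d-1)/d}$ yields $\mathcal{W}(u) \leq \tfrac{c_d C_{HLS}}{2} M^{2/d}\norm{u}_{2-2/d}^{2-2/d}$, so $\F(u)$ is coercive on $L^{2-2/d}$ once
\begin{equation*}
M < M_c := \left(\frac{2d}{c_d C_{HLS}(d-1)}\right)^{d/2}.
\end{equation*}
This bounds $\sup_t \norm{u(t)}_{2-2/d}$, and since $(2-m)/(2-m^\star)=1$ here too, Theorem \ref{thm:Continuation} delivers global existence.

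The decay $\norm{u(t)}_\infty \lesssim (1+t)^{-1}$ follows from an Alikakos/Moser iteration exploiting the scaling $u_\lambda(x,t)=\lambda^d u(\lambda x,\lambda^d t)$. Testing \eqref{def:ADD} against $pu^{p-1}$ and using $-\Delta c = u$ gives
\begin{equation*}
\frac{d}{dt}\frac{\norm{u}_p^p}{p} = -c_{p,d}\norm{\nabla u^{(p+1-2/d)/2}}_2^2 + \frac{p-1}{p}\norm{u}_{p+1}^{p+1}.
\end{equation*}
By Sobolev (or Gagliardo-Nirenberg in $d=2$) and the uniform $L^{2-2/d}$ bound derived above, the aggregation term is absorbed by the dissipation with a margin scaling like $M_c - M > 0$, yielding a differential inequality $\tfrac{d}{dt}\norm{u}_p^p \leq -c\,\norm{u}_p^{p(1+\beta_p)}$ whose time integration produces algebraic decay at each level $p$. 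Iterating this scheme with dyadic $p_k = 2^k p_0$ and carefully tracking the $t$-scaling at every step (using Young's inequality to trade against $t^{-1}$ weights as in \cite{Corrias04,SugiyamaDIE06,BedrossianIA10}) produces $\norm{u(t)}_{p_k} \lesssim t^{-(p_k-1)/p_k}$, and passing $p_k\to\infty$ yields $\norm{u(t)}_\infty\lesssim t^{-1}$ for $t\geq 1$; the short-time portion is controlled by the local theory and initial $L^\infty$ bound from Theorem \ref{thm:loc_theory}, yielding the uniform bound $(1+t)^{-1}$.

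The hardest step in \textit{(i)} is identifying the correct sharp constants $\delta$ and $\alpha$ and managing the $BMO$ tail of $\K$ in $\Real^2$, where the interaction energy no longer simply estimates via $\norm{\K}_\infty$; that is where Lemma \ref{lem:Hardy} enters. In \textit{(ii)} the main technical obstacle is propagating the time decay through the Moser iteration without losing the $t^{-1}$ rate, which requires the correct Gagliardo-Nirenberg interpolation at each step to ensure the constants do not blow up as $p\to\infty$.
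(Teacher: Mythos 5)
Your proposal for part \textit{(i)} follows the same broad strategy as the paper (free energy dissipation plus logarithmic HLS plus the continuation criterion), but it contains a genuine gap that the paper's proof is specifically constructed to avoid. You claim to bound $\int u(t)\log u(t)\,dx$ \emph{uniformly in time} with constant $C(M,\alpha,\mathcal{M}_2(u_0))$. This is false in general. When you control the far-field/BMO contribution to the interaction energy, a term proportional to the first (or second) moment of $u(t)$ necessarily appears, and by Lemma \ref{lem:SME} $\mathcal{M}_2(u(t))$ grows linearly in $t$ for these critical problems — there is no a priori uniform moment bound. The paper therefore only obtains a bound on $\int_{\set{u > k}} u\log u\,dx$ that grows like $\mathcal{M}_2(t)^{1/2}\lesssim (1+t)^{1/2}$, which is finite on every finite time interval and hence sufficient for Theorem \ref{thm:Continuation}, but \emph{not} uniform in $t$. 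The uniform-in-time entropy bound (and hence uniform $L^\infty$ boundedness) is precisely what the additional hypotheses ($\K$ bounded below and $\int_0^1 A'(z)/z\,dz<\infty$) buy you; you attribute the uniform bound to the generic case. Separately, your invocation of Lemma \ref{lem:Hardy} and the $BMO$–$\mathcal{H}^1$ duality \eqref{ineq:BMOHo} to control the tail of $\K$ does not apply: Lemma \ref{lem:Hardy} requires $\int f\,dx=0$, whereas $u$ has nonzero mass $M$, so $u\notin\mathcal{H}^1$. The paper instead decomposes the interaction integral by \emph{distance} $|x-y|$ into near/intermediate/far regions, applies the localized log-HLS \eqref{ineq:log_sob_localized} on the near region, and bounds the far region directly by $\frac{\log R}{R}M^{3/2}\mathcal{M}_2(t)^{1/2}$, which cleanly tracks the time-dependence.

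For part \textit{(ii)} your approach is genuinely different from the paper's. You propose a direct time-weighted Alikakos/Moser iteration in physical variables, in the spirit of \cite{Corrias04,SugiyamaDIE06}. The paper instead passes to self-similar variables $\theta(\tau,\eta)$ via $e^{-d\tau}\theta(\tau,\eta)=u(t,x)$, $e^\tau\eta=x$, $e^{d\tau}-1=dt$, in which the $t^{-1}$ decay of $u$ becomes the uniform boundedness of $\theta$. The confinement term $\grad\cdot(\eta\theta)$ contributes a second-moment term to the modified free energy $\mathcal{G}(\theta)$, which together with Lemma \ref{lem:entropy_bd} and log-HLS gives uniform equi-integrability of $\theta$, and then the continuation machinery delivers the $L^\infty$ bound on $\theta$. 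The advantage of the paper's route is that the scaling bookkeeping is done once, by the change of variables; your route requires showing the iterate constants do not deteriorate as $p\to\infty$ and that the decay rates interlock correctly across dyadic levels, which is substantially harder and not justified in your sketch — in particular, the claimed differential inequality $\frac{d}{dt}\norm{u}_p^p\leq -c\norm{u}_p^{p(1+\beta_p)}$ does not follow directly from the dissipation term $\norm{\grad u^{(p+1-2/d)/2}}_2^2$ without a precise Gagliardo–Nirenberg interpolation and control of the lower-order $L^1$ factor that appears, and the constants in that interpolation grow with $p$. Such an argument is in principle carried out in the references you cite, so the idea is not wrong, but as written it does not constitute a proof.
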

\begin{proposition} \label{prop:critical_mass}
Let $\K$ satisify $\K(x) = -c\log \abs{x} + o(\log\abs{x}), \; x \rightarrow 0$ for some $c \geq 0$. 
Then, the critical mass $M_c$ satisifies
\begin{equation}
\lim_{z \rightarrow \infty} \frac{\Phi(z)}{z \log z} - \frac{c}{2d}M_c = 0. \label{def:MC}
\end{equation}
\end{proposition}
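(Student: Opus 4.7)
Write $\alpha := \lim_{z\to\infty}\Phi(z)/(z\log z)$, so the target is $M_c = 2d\alpha/c$. The plan is to establish matching bounds from both sides: (a) if $M < 2d\alpha/c$ then $u(t)$ is global, giving $M_c \geq 2d\alpha/c$; and (b) if $M > 2d\alpha/c$ there exist initial data which blow up in finite time, giving $M_c \leq 2d\alpha/c$. Direction (a) extracts sharp coercivity from the free energy via the logarithmic HLS inequality, while direction (b) is the standard virial argument.

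For direction (a), the key is to show $\mathcal{F}$ is coercive on the $L\log L$ scale when $M < 2d\alpha/c$. Given $\eta>0$, pick $\delta>0$ so that $|\K(x)+c\log|x|| \leq \eta|\log|x||$ for $|x|<\delta$. Splitting the interaction integral at scale $\delta$ and applying the localized log-HLS inequality \eqref{ineq:log_sob_localized} gives
$$\mathcal{W}(u) \leq \frac{(c+\eta)M}{2d}\int u\log u\,dx + C\bigl(M,\mathcal{M}_2(u),\delta,\eta\bigr),$$
where the far-field contribution is controlled using the growth bound $|\K(x)|\lesssim 1+\log_+|x|$ implied by \textbf{(KN)} and \textbf{(BD)}, together with the second moment. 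By definition of $\alpha$, for every $\epsilon>0$ there is $Z$ with $\Phi(z)\ge(\alpha-\epsilon)z\log z$ for $z>Z$; splitting the entropy at level $Z$ yields $S(u)\geq(\alpha-\epsilon)\int u\log u\,dx - C(M,Z)$. Subtracting,
$$\mathcal{F}(u) \geq \Bigl(\alpha-\epsilon-\tfrac{(c+\eta)M}{2d}\Bigr)\int u\log u\,dx - C\bigl(M,\mathcal{M}_2(u)\bigr),$$
and for $M<2d\alpha/c$ one chooses $\epsilon,\eta$ small enough that the bracketed coefficient is strictly positive. A standard second-moment computation using $A(z)\lesssim z$ for large $z$ (which follows from the critical regime $m^\star=1$ and the definition of $\alpha$) shows that $\mathcal{M}_2(u(t))$ is controlled on finite intervals. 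Combined with the energy dissipation $\mathcal{F}(u(t))\leq \mathcal{F}(u_0)$, this yields a uniform-in-time bound on $\int u(t)\log u(t)\,dx$, whence equi-integrability of $\{u(t)\}$ by the de la Vall\'ee–Poussin criterion. This is exactly condition \eqref{cond:equint} of Theorem \ref{thm:Continuation} with $m=m^\star=1$, so $T_\star=\infty$. Under the additional hypotheses $\int_0^1 A'(z)/z\,dz<\infty$ and $\K$ bounded below, a standard Alikakos iteration as in \cite{Kowalczyk05,CalvezCarrillo06,BRB10} upgrades this to $u(t)\in L^\infty((0,\infty)\times\Real^d)$.

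For direction (b), I invoke the virial identity
$$\frac{d}{dt}\mathcal{M}_2(u(t)) = 2d\int A(u)\,dx + \int\!\!\int u(x)u(y)(x-y)\cdot\nabla\K(x-y)\,dx\,dy.$$
The logarithmic behaviour $\K(x)=-c\log|x|+o(\log|x|)$ implies $(x-y)\cdot\nabla\K(x-y)=-c+o(1)$ as $|x-y|\to 0$, while $A(z)\sim\alpha z$ for large $z$. Choosing initial data with mass $M$ sufficiently concentrated on a small scale, the right-hand side is asymptotically $2d\alpha M - cM^2 = -M(cM-2d\alpha)$, which is strictly negative and uniformly bounded away from zero as long as $u$ stays concentrated. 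This forces $\mathcal{M}_2(u(t))$ to reach zero in finite time, contradicting the existence of a global weak solution with finite entropy. This is the standard virial argument of \cite{BlanchetEJDE06,Blanchet09,BRB10,Nagai95}, extended to kernels with general logarithmic singularity as in \cite{BRB10}.

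The hard part is the careful bookkeeping in direction (a): the log-HLS inequality must be applied only after restricting to scales where $\K$ already agrees with $-c\log|x|$ to within a factor $\eta$, and the far-field residual must be absorbed into error terms depending only on $M$ and $\mathcal{M}_2(u)$ without spoiling the sharp constant $c/(2d)$. Matching this to the sharp asymptotic $\Phi(z)\sim \alpha z\log z$ via a level-$Z$ split on the entropy is what produces the exact threshold $2d\alpha/c$. A secondary subtlety is running the virial argument for $\K$ only asymptotically logarithmic: one needs the initial data concentrated enough that the small-$|x-y|$ asymptotic dominates the $o(\log|x|)$ error uniformly in time, for which the second moment can be tracked via the virial identity itself.
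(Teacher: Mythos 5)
Your direction (a) is essentially the paper's own argument: the paper does not give Proposition \ref{prop:critical_mass} a separate proof, but derives \eqref{def:MC} inside the proof of Theorem \ref{thm:GWP}(i) by exactly the steps you describe --- splitting the interaction energy at a scale $\delta$ where $\K$ agrees with $-c\log\abs{x}$ up to a factor $\epsilon$, applying the localized log-HLS inequality \eqref{ineq:log_sob_localized}, controlling the far field by the mass and second moment (Lemma \ref{lem:SME} gives $\mathcal{M}_2(t)\lesssim 1+t$), lower-bounding the entropy via Lemma \ref{lem:entropy_bd}, and invoking Theorem \ref{thm:Continuation}. Two small imprecisions: the resulting bound on $\int u\log u\,dx$ is uniform only on finite time intervals (since $\mathcal{M}_2$ grows linearly), not globally in time as you state --- this is still enough for $T_\star=\infty$; and your entropy split $S(u)\geq(\alpha-\epsilon)\int u\log u\,dx - C(M,Z)$ needs the second-moment correction of Lemma \ref{lem:entropy_bd}, because $\int_{\set{u<1}}u\abs{\log u}\,dx$ is not controlled by $M$ and $Z$ alone; your final display does carry the $\mathcal{M}_2$ dependence, so the structure survives.

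Direction (b) goes beyond what the paper proves here --- sharpness is only asserted and deferred to the virial argument of \cite{BRB10,BlanchetEJDE06,Nagai95} --- and your sketch of it has a genuine gap. In the virial identity the diffusive term is $2d\int A(u)\,dx \leq 2dM\sup_{z}A(z)/z$, and in general $\sup_z A(z)/z$ can strictly exceed $\alpha=\lim_{z\to\infty}A(z)/z$ (which is what $\lim \Phi(z)/(z\log z)$ controls). Your claim that the right-hand side is asymptotically $2d\alpha M - cM^2$ therefore requires either $A(z)\leq(\alpha+\epsilon)z$ for all $z$, or an argument that the solution concentrates so that $\int A(u)\,dx\to\alpha M$; without one of these, the virial argument only yields blow-up above $2dM^{-1}\sup_z A(z)/z$-type thresholds and does not pin $M_c$ at $2d\alpha/c$. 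Separately, the step $(x-y)\cdot\grad\K(x-y)\to -c$ does not follow from $\K(x)=-c\log\abs{x}+o(\log\abs{x})$ alone; one needs the monotonicity hypotheses \textbf{(MN)} of Definition \ref{def:admK} to convert the function asymptotic into a derivative asymptotic. Since the proposition as used in the paper only requires the global-existence half, your proof of that half is correct and matches the paper; the sharpness half should either be dropped or repaired along the lines above.
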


\begin{remark}
Note the additional requirements in part \textit{(i)} of Theorem \ref{thm:GWP} in order to assert global boundedness.  
Although we believe that solutions are bounded (and in many cases likely decay as in \textit{(ii)}) our proof cannot rule out an unbounded increase of entropy or interaction energy as the solution spreads without the additional assumptions. 
\end{remark}

\section{Local Existence and Energy Dissipation}
\subsection{Local Existence for Convolution-type Systems} \label{subsec:LocExistK}
This section focuses on the proof of local existence of weak solutions with bounded second moment.  The proofs included here are simplifications of our work 
in \cite{BRB10} and combines techniques from the PKS model found in \cite{BlanchetEJDE06} and the non-singular aggregation-diffusion equations found in 
\cite{BertozziSlepcev10}.  One advantage of the new techniques used here is the treatment of $\Real^2$. 
Furthermore, in contrast to the work in \cite{BRB10}, we obtain solutions directly in $\Real^d$, removing the need for the intermediate 
step of finding solutions in bounded domains.  Consider the regularized aggregation-diffusion equation
\begin{align}\label{def:RAE}
\reg{u}_t =  \Delta \reg{A}(\reg{u})- \nabla \cdot \left(\reg{u}\nabla \reg{\K} \ast \reg{u}\right).
\end{align}
\noindent We define,
\begin{align}\label{ParabolicReg}
\reg{A}(z) = \int^z_0 a_\epsilon'(z) dz,
\end{align}
\noindent where $a_\epsilon'(z)$ is a smooth function, such that $A'(z)+\epsilon \leq a_\epsilon'(z) \leq A'(z)+2\epsilon$. 
We denote,  
\begin{align*}
\reg{\K}(x) := \left\{ 
\begin{array}{ll}
\int \frac{1}{\epsilon^d\eta(x)^d}\eta\left(\frac{x-y}{\epsilon\eta(x)}\right)\K(y) dy & \abs{x} < 1 \\
\K(x) & \abs{x} \geq 1
\end{array}
\right.
\end{align*}
Hence by Definition \ref{def:admK} $\K^\epsilon \in C^3(\Real^d)$.

\begin{proposition}[Local Existence for the Regularized System]\label{prop:LERADD}
Let $\epsilon>0$ be fixed and $u_0(x)\in L_+^1(\Real^d;(1+\abs{x}^2)dx) \cap L^\infty(\Real^d)$. Then \eqref{def:RAE} has a classical solution 
$\reg{u}$ on $[0,T] \times \Real^d$ for all $T> 0$ with $\reg{u}(0) = \mollify u_0$. 
\end{proposition}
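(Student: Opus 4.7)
The plan is to combine a short-time fixed point argument with a priori maximum-principle bounds to extend the solution to arbitrary $T > 0$. The key observation is that both regularizations render \eqref{def:RAE} a uniformly parabolic quasilinear PDE with smooth, bounded coefficients: by construction $a_\epsilon'(z) \geq \epsilon > 0$, and $\reg{\K} \in C^3(\Real^d)$ with $\grad\reg{\K}$ and $D^2\reg{\K}$ uniformly bounded, since the weighted mollification near the origin removes the singularity of $\K$ while the tail of $\K$ away from the origin is controlled by admissibility.

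First, for $v$ in a suitable ball of $C([0,T_*];L^1(\Real^d)\cap L^\infty(\Real^d))$, I would define $\Phi(v)=w$ as the classical solution of the frozen-drift quasilinear equation
\begin{equation*}
w_t = \Delta \reg{A}(w) - \grad\cdot\bigl(w\,\grad\reg{\K}\ast v\bigr), \qquad w(0)=\mollify{u_0}.
\end{equation*}
For fixed $v$ the drift $\grad\reg{\K}\ast v$ is smooth and bounded, so this is a uniformly parabolic quasilinear equation with smooth coefficients on $\Real^d$. Classical theory (e.g.\ Ladyzhenskaya--Solonnikov--Ural'tseva, or a Galerkin approximation with parabolic bootstrapping) produces a unique classical solution $w \in C^{2,1}$ together with $L^1$, $L^\infty$, and H\"older estimates depending only on the initial data and $\epsilon$. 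A standard Schauder or Banach fixed-point argument on a convex invariant ball---either by using parabolic regularity to extract compactness and continuity, or by an $L^2$ energy estimate on the difference of iterates that absorbs the gradient term via the $\epsilon$-uniform dissipation---then yields a fixed point $\reg{u}$ solving \eqref{def:RAE} classically on $[0,T_*]$.

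To extend from $T_*$ to arbitrary $T > 0$, a priori bounds independent of the starting time are required. Mass conservation follows immediately from the divergence form. For the $L^\infty$ bound, expand the divergence to obtain
\begin{equation*}
\reg{u}_t = a_\epsilon'(\reg{u})\Delta\reg{u} + a_\epsilon''(\reg{u})\abs{\grad\reg{u}}^2 - (\grad\reg{\K}\ast\reg{u})\cdot\grad\reg{u} - \reg{u}\,\bigl(\Delta\reg{\K}\ast\reg{u}\bigr),
\end{equation*}
and apply the maximum principle: at any spatial maximum of $\reg{u}$ the first three terms are non-positive (since $\grad\reg{u}=0$ and $\Delta\reg{u}\leq 0$ there), so $\partial_t\norm{\reg{u}(t)}_\infty \leq C(\epsilon)\norm{\reg{u}(t)}_1\norm{\reg{u}(t)}_\infty$, and Gr\"onwall rules out finite-time blowup. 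Integrating \eqref{def:RAE} against $\abs{x}^2$ and using Cauchy--Schwarz together with mass conservation similarly bounds the second moment.

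The main obstacle is running the classical quasilinear parabolic machinery on the unbounded domain $\Real^d$, since textbook statements are often phrased for bounded domains. I would either invoke existing $\Real^d$-versions directly---valid here because all coefficient bounds are uniform in $x$---or solve first on expanding balls $B_R$ with zero Dirichlet data and pass $R\to\infty$ using the a priori $L^1\cap L^\infty$ and moment bounds together with parabolic compactness. Either way the regularization has already done the heavy lifting, leaving only routine parabolic estimates.
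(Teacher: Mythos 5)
Your overall strategy---freeze the drift $\grad\reg{\K}\ast v$, solve the resulting uniformly parabolic quasilinear equation, and close with a fixed-point argument---is the same as the paper's, and your a priori $L^1$, $L^\infty$, and second-moment bounds match the spirit of the paper's Lemma~\ref{lem:AprioriBoundLP}. But the proposal is vague exactly where the paper has to work hardest, and the specific shortcuts you suggest do not quite work as stated.

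First, a ball in $C([0,T_*];L^1\cap L^\infty)$ is not compact in that topology, so Schauder cannot be applied directly on such a set. The paper resolves this by choosing the ambient topology to be $L^2(0,T;L^2)$ and defining the invariant set $\mathcal{S}_T$ with \emph{additional} control on $\|v\|_{L^2(0,T;V)}$ (where $V$ carries a first-moment weight, so that $V\subset\subset L^2$) and on $\|v_t\|_{L^2(0,T;H^{-1})}$; compactness of $\mathcal{S}_T$ then comes from Aubin--Lions. Those extra bounds are the point of parts \textit{(iv)}--\textit{(vi)} of Lemma~\ref{lem:AprioriBoundLP}; your proposal supplies neither them nor a substitute.

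Second, and more seriously, your suggested ``$L^2$ energy estimate on the difference of iterates that absorbs the gradient term via the $\epsilon$-uniform dissipation'' runs into a problem with the nonlinear diffusion. Writing $w_i=\Phi(v_i)$ and testing $(w_1-w_2)_t$ against $w_1-w_2$, the diffusion contributes
\begin{equation*}
-\int \grad(w_1-w_2)\cdot\grad\bigl(\reg{A}(w_1)-\reg{A}(w_2)\bigr)\,dx
= -\int a_\epsilon'(w_1)\abs{\grad(w_1-w_2)}^2\,dx
  - \int \bigl(a_\epsilon'(w_1)-a_\epsilon'(w_2)\bigr)\grad w_2\cdot\grad(w_1-w_2)\,dx,
\end{equation*}
and the second term has no sign and is not obviously absorbable without pointwise gradient control on $w_2$ and $L^\infty$ control on $w_1-w_2$. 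The monotone structure that saves the day lives in $\dot{H}^{-1}$: testing against $(-\Delta)^{-1}(w_1-w_2)$ turns the diffusion contribution into $-\int(\reg{A}(w_1)-\reg{A}(w_2))(w_1-w_2)\,dx\le 0$ by monotonicity of $\reg{A}$. This $\dot{H}^{-1}$ stability estimate (interpolated against the uniform $H^1$ bound to recover $L^2(0,T;L^2)$ convergence) is precisely how the paper proves continuity of $J$, and it is the substantive step your proposal elides. A Banach (contraction) route faces the same obstruction and is not pursued in the paper. With these two points filled in---Aubin--Lions compactness in the right topology and the $\dot{H}^{-1}$ continuity estimate---your outline becomes the paper's argument.
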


To prove the above proposition we need some preliminary definitions.  
Define the Hilbert space
\begin{equation*} 
V = \set{v \in H^1: \norm{v}_V <\infty}, \;\; \norm{v}_V = \sqrt{<v,v>_V},
\end{equation*}  
with the inner product defined via $<u,v>_{V} := <u,v>_{H^1} + \int \abs{x} uv dx$.
Note by the Rellich-Khondrashov compactness theorem, $V$ is compactly embedded in $L^2(\Real^d)$ \cite{Evans}. 
We will construct a weak solution to \eqref{def:RAE} with an analogous definition of weak solutions to Definition \ref{def:WSRD}. 
For the remainder of the paper we denote the mass of the initial data, $u_0$, by $M$, \textit{i.e.} $\norm{u_0}_1 = M$. \\

We prove Proposition \ref{prop:LERADD} using the Schauder fixed point theorem, see e.g. \cite{BenyaminiLindenstrauss}. 
The necessary compactness for the application is obtained via the Aubin-Lions Lemma \cite{Showalter}. 
Now we state and prove some a priori estimates that will be of used in the proof of Proposition \ref{prop:LERADD}.  Some of these estimates are 
the same or closely related to estimates proved elsewhere (e.g. \cite{BlanchetEJDE06,BertozziSlepcev10,BRB10}).  
\begin{lemma}[A priori bounds with linear advection]\label{lem:AprioriBoundLP}
For fixed $\epsilon>0$ let $\tilde{u}\in L^2(0,T;L^2) \cap L^\infty(0,T;L^1)$ be given.  Let $\reg{u}$ be the global strong solution to
\begin{align}\label{eq:LinearProblem}
\reg{u}_t =  \Delta \reg{A}(\reg{u})- \nabla \cdot \left(\reg{u}\nabla \reg{\K} \ast \tilde{u}\right)
\end{align}
\noindent with initial data $\reg{u}_0 = \mollify u_0(x)$ with $u_0 \in L_+^1(\Real^d;(1+\abs{x}^2)dx) \cap L^\infty(\Real^d)$.  Then,
\begin{itemize}
\item[(i)] $\norm{\reg{u}}_{L^2(0,T;L^2)} \leq T^{1/2}\norm{\reg{u}}_{L^\infty(0,T;L^2)}\leq T^{1/2}\norm{u_0}_2\exp\left(\frac{1}{2}\norm{\Delta\reg{\K}}_{2}\norm{\tilde{u}}_{L^2(0,T,L^2)}T^{1/2}\right)$ 
\item[(ii)] $\norm{\reg{u}(t)}_{\infty} \leq \norm{u_0}_\infty\exp\left(\norm{\Delta \reg{\K}}_2\norm{\tilde{u}}_{L^2(0,T,L^2)}T^{1/2}\right)$. 
\item[(iii)] $\norm{\grad \reg{A}(\reg{u})}_{L^2(0,T;L^2)}^2 \lesssim \reg{A}(\norm{\reg{u}}_\infty)\norm{\reg{u}}_1 +\norm{\nabla \reg{\K}}^2_{L^\infty}\norm{\tilde{u}}^2_{L^\infty(0,T;L^1)} \norm{\reg{u}}_{L^\infty(0,T;L^2)}^2$.
\item[(iv)]$\norm{\nabla \reg{u}}_{L^2(0,T,L^2(\Real^d))}\lesssim \epsilon^{-1}\norm{\grad \reg{A}(\reg{u})}_{L^2(0,T;L^2)}$. 
\item[(v)] $\mathcal{M}_2(\reg{u}(t)) \leq C(\mathcal{M}_2(u_0),\epsilon,\norm{u}_{L^\infty(0,T;L^\infty)}, \norm{\tilde{u}}_{L^2(0,T;L^2)},\norm{u_0}_1,T)$. 
\item[(vi)] $\norm{\partial_t\reg{u}}_{L^2(0,T, H^{-1})} \leq C(\epsilon,\norm{u}_{L^\infty(0,T;L^\infty)}, \norm{\tilde{u}}_{L^2(0,T;L^2)},\norm{u_0}_1,T)$. 
\end{itemize}
\end{lemma}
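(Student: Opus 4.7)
The plan is to run standard $L^p$ energy estimates on the linearized equation, exploiting that for fixed $\epsilon>0$ the mollification gives $\grad\reg{\K}\in L^\infty(\Real^d)$ and $\Delta\reg{\K}\in L^2(\Real^d)$, with bounds depending on $\epsilon$; the $L^2$ bound on $\Delta\reg{\K}$ uses $|\Delta\K|\lesssim|x|^{-d}$ away from the origin (from \textbf{(BD)}), which is square-integrable at infinity in every $d\geq 2$. For parts (i) and (ii) I would handle all $L^p$ bounds simultaneously: test \eqref{def:RAE} against $\reg{u}^{p-1}$, discard the nonpositive diffusion $(p-1)\int a'_\epsilon(\reg{u})\reg{u}^{p-2}|\grad\reg{u}|^2$, and integrate the drift twice by parts to obtain $-\tfrac{p-1}{p}\int\reg{u}^p\,\Delta\reg{\K}\ast\tilde{u}$, which I bound by $\tfrac{p-1}{p}\|\Delta\reg{\K}\|_2\|\tilde{u}\|_2\|\reg{u}\|_p^p$ via Young's convolution inequality. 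Gronwall then gives $\|\reg{u}(t)\|_p\leq \|u_0\|_p\exp\bigl(\tfrac{p-1}{p}\|\Delta\reg{\K}\|_2\|\tilde{u}\|_{L^2(0,T;L^2)}T^{1/2}\bigr)$; setting $p=2$ and integrating in time yields (i), and passing $p\to\infty$ yields (ii).

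For (iii) I would test against $\reg{A}(\reg{u})$ and introduce $\Psi(z)=\int_0^z\reg{A}(s)\,ds$, so that integrating in time gives $\int\Psi(\reg{u}(T)) + \int_0^T\|\grad\reg{A}(\reg{u})\|_2^2 = \int\Psi(\reg{u}(0)) + \int_0^T\int\grad\reg{A}(\reg{u})\cdot\reg{u}\grad\reg{\K}\ast\tilde{u}$. Weighted Young's absorbs half of the dissipation into the left side and leaves $\tfrac{1}{2}\|\grad\reg{\K}\|_\infty^2\|\tilde{u}\|_{L^\infty(0,T;L^1)}^2\int_0^T\|\reg{u}\|_2^2$, which is already controlled by (i) (the factor of $T$ is absorbed into the implicit constant), while the initial entropy is bounded via $\Psi(z)\leq z\reg{A}(z)$ to produce $\reg{A}(\|\reg{u}\|_\infty)M$, and $\Psi(\reg{u}(T))\geq 0$ is dropped. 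Part (iv) is immediate from the regularization: $a'_\epsilon(z)\geq\epsilon$ gives $|\grad\reg{u}|\leq \epsilon^{-1}|\grad\reg{A}(\reg{u})|$ pointwise.

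For (v) I would multiply by $|x|^2$ and integrate by parts twice; the diffusion contributes $2d\int\reg{A}(\reg{u})$, bounded by $2dC(\epsilon,\|\reg{u}\|_\infty)M$ using that $a'_\epsilon$ is continuous and hence bounded on $[0,\|\reg{u}\|_\infty]$, and the drift contributes a term of size $2\|\grad\reg{\K}\|_\infty\|\tilde{u}\|_1\bigl(M\,\mathcal{M}_2(\reg{u})\bigr)^{1/2}$ after Cauchy-Schwarz, so a single Gronwall step closes the estimate. For (vi) I would write $\reg{u}_t = \grad\cdot\bigl(\grad\reg{A}(\reg{u}) - \reg{u}\grad\reg{\K}\ast\tilde{u}\bigr)$ and use duality, getting $\|\reg{u}_t\|_{H^{-1}}\lesssim \|\grad\reg{A}(\reg{u})\|_2 + \|\reg{u}\grad\reg{\K}\ast\tilde{u}\|_2$ pointwise in $t$, both of which integrate in time thanks to (i)--(iii). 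The main obstacle in the whole lemma is purely bookkeeping: tracking that every constant in (v)--(vi) depends only on the enumerated quantities, and invoking mass conservation $\|\reg{u}(t)\|_1 = M$ (obtained by testing with $1$, legitimate since $\reg{u}$ is a classical solution) to simplify the resulting Gronwall inequalities. There is no genuine analytic difficulty beyond that, because $\epsilon > 0$ is fixed throughout and the kernel and nonlinearity have both been smoothed.
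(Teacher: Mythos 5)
Your proposal is correct and follows essentially the same route as the paper's proof: energy estimates testing against $(\reg{u})^{p-1}$ for (i)--(ii) with $p\to\infty$, testing against $\reg{A}(\reg{u})$ and absorbing half the dissipation by weighted Young's for (iii), the pointwise $a_\epsilon'\geq\epsilon$ bound for (iv), the second-moment computation with Gr\"onwall for (v), and the $H^{-1}$ duality pairing for (vi). The only cosmetic difference is in (iii): the paper tests against $\reg{A}(\reg{u})\,\eta_R$ with the spatial cutoff $\eta_R$ and then sends $R\to\infty$ to justify the integration by parts rigorously (the cutoff error is controlled by $R^{-2}\reg{A}(\|\reg{u}\|_\infty)\|\reg{A}(\reg{u})\|_1$), and it invokes the chain-rule lemma from \cite{BRB10,BertozziSlepcev10} to obtain $\int_0^T\langle\reg{u}_t,\tilde{A}\rangle$; your direct computation with $\Psi(z)=\int_0^z\reg{A}(s)\,ds$ is the same identity up to these technical justifications, which you correctly flag as bookkeeping rather than substance.
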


\begin{proof}
In what follows denote $M := \norm{\reg{u}(t)}_1 = \norm{u_0}_1$. 
By \eqref{eq:LinearProblem} and integration by parts, once on the diffusion term and twice on the aggregation term we obtain 

\begin{align*}
\frac{d}{dt}\int (\reg{u})^2 dx & \leq -2\int \abs{\reg{A}(\reg{u})}^2 + \int \nabla (\reg{u})^2 \cdot\nabla \reg{\K} \ast \tilde{u} dx \\
& \leq - \int \nabla (\reg{u})^2 \Delta \reg{\K} \ast \tilde{u} dx\\
&\leq  \norm{\Delta \reg{\K}}_{2}\norm{\tilde{u}}_{2}\int (\reg{u})^2 dx. 
\end{align*}  
Integrating implies, 
\begin{equation*}
\norm{u(t)}_2 \leq \norm{u_0}_2\exp\left(\frac{1}{2}\norm{\Delta \reg{\K}}_2 \norm{\tilde{u}}_{L^1(0,T,L^2)}\right) \leq \norm{u_0}_2\exp\left(\frac{1}{2}\norm{\Delta \reg{\K}}_2 \norm{\tilde{u}}_{L^2(0,T,L^2)}T^{1/2}\right). 
\end{equation*}
which gives \textit{(i)}.  
The bound \textit{(ii)} follows similarly by estimating the growth of $\norm{u(t)}_p$ and passing to the limit $p \rightarrow \infty$.
To continue, we need a bound on the $L^1$ norm of $\reg{A}(\reg{u})$.  Condition \textbf{(D3)} of Definition \ref{def:admDiff} implies that $\reg A(z) \leq \left(C_A + 2\epsilon\right)z$ for some $C_A > 0$ and sufficiently small $z$. 
Hence by Chebyshev's inequality,
\begin{align*}
\int \reg{A} (u) dx &= \int_{\set{u<1}} \reg{A}(u) dx + \int_{\set{u\geq 1}} \reg{A} (u) dx\\
&\leq (C_A+2\epsilon)M+ \reg{A}(\norm{u}_\infty) \lambda_u(1) \leq (C_A + 2\epsilon + \reg{A}(\norm{u}_\infty))M.
\end{align*}

We now turn to the less trivial estimate \textit{(iii)}. Let $\eta_R(x) := \eta(xR^{-1})$ for some $R > 0$, where $\eta$ is the smooth cut-off function defined above. Now take $\tilde{A}=\reg{A}(\reg{u})\eta_R$ as a test function in the definition of weak solution (Definition \ref{def:WSRD}), which implies,
\begin{align*}
\int_0^T\left\langle u_t(t), \tilde{A}\right\rangle\;ds &= \int_0^T\int \left(-\nabla \reg{A}(u) + u\nabla \reg{\K} \ast \tilde{u}\right)\cdot \nabla \tilde{A}\; dxds\\
& \leq -\int_0^T\int \grad \reg{A}(\reg{u}) \cdot \grad \tilde{A}(\reg{u}) dx + \frac{1}{2}\int_0^T\hspace{-6pt}\int\hspace{-1pt} \abs{\nabla\tilde{A}(\reg{u})}^2 dxdt\\
&\quad + \frac{1}{2}\norm{\nabla \reg{\K}}^2_{L^\infty}\norm{\tilde{u}}^2_{L^\infty(0,T;L^1)} \int_0^T\int \abs{\reg{u}}^2 dx.
\end{align*}
Note also, we can apply the chain rule (see Lemma 14 in \cite{BRB10} or Lemma 6 in \cite{BertozziSlepcev10}) and get
\begin{align*}
 -\int_0^T\int\langle \reg{u}_t(t), \tilde{A} \rangle dxds \leq \liminf_{t\rightarrow 0} \int \int_0^{\reg{u}}\tilde{A}(s) ds dx.
\end{align*} 

Furthermore, since $A$ is increasing, we have $\int_0^u \reg{A}(s)ds\leq \reg{A}(u)u$. 
Therefore,
\begin{align*}
\frac{1}{2}\int_0^T \int_{\abs{x} \leq R/2} \abs{\grad \reg{A}(u)}^2(x) dx & \leq A(\norm{\reg{u}(0)}_{L^\infty})\norm{\reg{u}(0)}_{L^1} + \frac{1}{2}\norm{\nabla \reg{\K}}^2_{L^\infty}\norm{\tilde{u}}^2_{L^\infty(0,T;L^1)}\norm{\reg{u}}_{L^2(0,T;L^2)}^2 \\ & \quad + \int_0^T \int_{R/2 \leq \abs{x} \leq R} -\grad \reg{A} \cdot \grad (\reg{A}\eta_R) + \frac{1}{2}\abs{\grad (\reg{A}\eta_R)}^2 dx.  
\end{align*}
 
The latter error term can be bounded as follows 
\begin{align*}
\int_{R/2 \leq \abs{x} \leq R} -\grad \reg{A} \cdot \grad (\reg{A}\eta_R) + \frac{1}{2}\abs{\grad (\reg{A}\eta_R)}^2 dx & \leq \frac{1}{2}\int_{R/2 \leq \abs{x} \leq R}\abs{\reg{A}(\reg{u})}^2\abs{\grad \eta_R}^2 dx \\
& \lesssim R^{-2}\reg{A}(\norm{\reg{u}}_\infty)\norm{\reg{A}(\reg{u})}_1. 
\end{align*}

Therefore, since $\reg{A}(\reg{u}) \in L^1 \cap L^\infty$, we have for all $T < \infty$, by taking $R \rightarrow \infty$, 
\begin{align*}
\norm{\nabla \reg{A}(\reg{u})}_{L^2(0,T,L^2)}^2 & \lesssim \tilde{A}(\norm{\reg{u}}_{L^\infty})\norm{\reg{u}}_{L^1} + \norm{\nabla \reg{\K}}^2_{L^\infty}\norm{\widetilde{u}}^2_{L^\infty(0,T;L^1)} \norm{\reg{u}}_2^2. 
\end{align*}

This completes the proof of \textit{(iii)}. 
This bound, along with the fact that $a'_\epsilon \geq \epsilon$, gives us the bound in \textit{(iv)}. \\

The bound on the second moment of $\reg{u}$, \textit{(v)}, follows from the bound on the $L^1$ norm of $\reg{A}(u)$:
\begin{align*}
\frac{d}{dt}\int \abs{x}^2 \reg{u}\;dx & = \int \abs{x}^2\Delta\reg{A}(\reg{u})\;dx - \int \abs{x}^2\nabla\cdot(\reg{u}\nabla \K\ast\reg{u})\;dx\\
&\leq 2\int x\cdot \nabla \reg{A}(\reg{u}) \;dx + 2 \int x\cdot \reg{u}\nabla \K\ast \reg{u}\;dx\\  
&\leq 2d\int \reg{A}(\reg{u}) dx + 2\norm{\nabla \reg{\K}\ast \widetilde{u}}_\infty \norm{\reg{u}}_{L^1}^{1/2}\left(\int \abs{x}^2 \reg{u}\; dx\right)^{1/2}\\
& \leq 2d\int \reg{A}(\reg{u}) dx + 2M^{1/2}\norm{\nabla\reg\K}_{L^\infty}\norm{\widetilde{u}}_{L^1}\left(\int \abs{x}^2 \reg{u}\; dx\right)^{1/2} \\
& \leq 2d\int \reg{A}(\reg{u}) dx + 2M^{1/2}\norm{\nabla\reg\K}_{L^\infty}\norm{\widetilde{u}}_{L^1}\left(1 + \int \abs{x}^2 \reg{u}\; dx\right). 
\end{align*}
An application of Gr\"onwall's inequality then provides the quantitative result. 
Let $\phi \in C([0,T];H^1)$, then by the definition of the weak solution (Definition \ref{def:WSRD}, Remark \ref{rmk:TimeWSRD}) and Cauchy-Schwarz,
\begin{align*} 
\int_0^T \left\langle \partial_t\reg{u}(t), \phi(t)\right\rangle^2 dt \leq \int_0^T\norm{\nabla A(\reg{u})-\reg{u}\nabla\reg\kernel\ast \widetilde{u}}_{L^2(\Real^d)}\norm{\nabla \phi(t)}_{L^2(\Real^d)}dt. 
\end{align*}
Therefore the bounds we have already obtained can be combined to imply $\norm{\partial_t \reg{u} }_{L^2(0,T;H^{-1})} \leq C(T,\tilde{u},u_0)$.
\end{proof}
Now that we have all the required a priori estimates we are ready to prove Proposition \ref{prop:LERADD}.  
\begin{proof} (\textbf{Proposition \ref{prop:LERADD}})
Define the compact and convex subset of $L^2(0,T;L^2)$
\begin{align*}
\mathcal{S}_T = \set{v \in L^2(0,T; V) : \norm{v_t}_{L^2(0,T; H^{-1})} + \norm{v}_{L^2(0,T;V)}\leq C_0 \; \norm{v}_{L^\infty(0,T;L^1)} \leq \norm{u_0}_1}, 
\end{align*}  
for some $C_0$ to be chosen below.  
Compactness in the $L^2(0,T;L^2)$ topology follows from the Lions-Aubin lemma (since $V \subset \subset L^2$) and the fact that $\mathcal{S}_T$ is 
closed due to the weak compactness of $V$ and $L^2(0,T;H^{-1})$, and the lower semi-continuity of the $L^1$ norm. \\

Define the map $J:\mathcal{S}_T \rightarrow \mathcal{S}_T$ as follows: given $\tilde{u}\in \mathcal{S}_T$ we define $J\tilde{u} = u$, 
where $u$ is the solution to \eqref{eq:LinearProblem}.  
Our goal is to apply the Schauder fixed point theorem. 
First, we verify that $\mathcal{S}_T$ is invariant under $J$.\\
\\
\textit{Step 1:}  (Invariance of $\mathcal{S}_T$)  Let $\tilde{u} \in \mathcal{S}_T$. 
Recall from clasical regularity theory (e.g. \cite{Lieberman96} or \cite{Evans}) that since $u$ satisfies the a priori estimate \textit{(ii)} from Lemma \ref{lem:AprioriBoundLP} it is a global strong solution (of course it is not necessarily classical due to the potential irregularity of $\tilde{u}$). 
Furthermore, the additional bounds provided by Lemma \ref{lem:AprioriBoundLP} plus conservation of mass give that $u\in \mathcal{S}_T$ for $C_0$ sufficiently 
large and $T$ sufficiently small.  Note that $T$ and $C_0$ depend only on $M$, the $L^\infty$ norm of the initial data, and $\epsilon$.  Indeed, \textit{(i)} 
provides a bound on $\norm{u}_{L^2(0,T;L^2)}$, \textit{(iv)} provides a bound on $\norm{\grad u}_{L^2(0,T;L^2)}$, and \textit{(vi)} provides
a bound on $\norm{\partial_t u}_{L^2(0,T;H^{-1})}$. Moreover, the bound \textit{(ii)} and \textit{(v)} along with,
\begin{align*}
\int \abs{x}u^2dx \leq \norm{u}_\infty\left(\int \abs{x}^2 u dx\right)^{1/2}\norm{u}_1^{1/2}, 
\end{align*}
provides an estimate on the first moment of the $L^2$ norm of $u$. 
Hence, $J:\mathcal{S}_T \rightarrow \mathcal{S}_T$.  We are left to show that $J$ is a continuous map in $L^2(0,T;L^2)$.\\
\\
\textit{Step 2:} (Continuity in $L^2(0,T;L^2)$)  We show $J$ is continuous as a mapping from $L^2(0,T;L^2)$ to $C([0,T];\dot{H}^{-1})$, which by 
interpolation against uniform bounds in $H^1$ provided by \textit{(iv)}, implies continuity in $L^2(0,T,L^2(\Real^d)$ 
(since $\norm{f}_2 \leq \norm{\grad f}_{2}^{1/2}\norm{f}_{\dot{H}^{-1}}^{1/2}$ as can be seen from the Fourier transform).
This approach, as opposed to working in $L^2$ directly, is convenient due to the nonlinear diffusion, which behaves most naturally in $\dot{H}^{-1}$.
This argument is reminiscent of the $\dot{H}^{-1}$ stability estimate used to prove uniqueness of weak solutions \cite{BertozziSlepcev10,BertozziBrandman10,BRB10,AzzamBedrossian11}. 
Let $\set{v_n}_{n \geq 0} \subset \mathcal{S}_T$ be such that $v_n \rightarrow v$ in $L^2(0,T;L^2(\Real^d))$. 
We show that $Jv_n \rightarrow Jv$ in $C([0,T];\dot{H}^{-1}(\Real^d))$.
To this end, let $\phi_n := -\N \ast (Jv_n - Jv)$ and denote $u_n := Jv_n$ and $u := Jv$.    
It is important to note that while the $v_n$'s may not have constant $L^1$ norm, $u_n$ and $u$ do since they have the same initial data.\\

In order for us to proceed, we must show $-\Delta \phi = u_n - u$, in the sense of distributions, and 
that $\norm{\grad \phi_n}_\infty + \norm{\grad \phi_n}_2 \lesssim_{C_0, M,u_0,T} 1$.
To prove the former, it suffices to show that $\phi(t) \in L^\infty(\Real^d)$, which implies $-\Delta \phi$ is a bounded distribution. 
All of these are proven by arguments found in the proof of Theorem 1 in \cite{AzzamBedrossian11} so we only briefly recall them here.   
In dimensions $d \geq 3$, these facts can all be established using the $L^p$ estimates on $u_n$ and $u$ combined with Young's inequality and the fact that $\N \in L^{d/(d-2),\infty}$ and $\grad\N \in L^{d/(d-1),\infty}$. 
However, in $d = 2$ the problem is more delicate as $\N(x) = \frac{1}{2\pi} \log \abs{x}$ grows at infinity. 
Lemma \ref{lem:Hardy} implies $\phi(t) \in L^\infty$, as $\int u_n - u dx = 0$ and $u_n,u$ have uniformly bounded first moments. 
Proving $\norm{\grad \phi(t)}_2 \lesssim 1$ can be shown using the Fourier transform: $\int u_n - u dx = 0$ implies $\widehat{u}_n - \widehat{u}(0,t) \equiv 0$ and the uniform boundedness of the first moments of $u_n,u$ implies Lipschitz continuity of $\widehat{u}_n - \widehat{u}$. 
See \cite{AzzamBedrossian11} for more information. 
Now we may compute the evolution of $\norm{\grad \phi}_2^2 = \norm{u_n - u}_{\dot{H}^{-1}}^2$: 
\begin{align*}
\frac{1}{2}\frac{d}{dt} \int \abs{\grad \phi_n(t)}^2 dx & = <\grad \phi_n(t), \partial_t \grad \phi_n(t) >
= -<\phi(t), \partial_tu_n(t) - \partial_tu(t)>. 
\end{align*}
Therefore, using $\phi(t)$ in the definition of weak solution, 
\begin{align*}
\frac{1}{2} \frac{d}{dt} \int \abs{\grad \phi_n(t) }^2 dx & = -\int (\grad A^\epsilon(u_n) - \grad A^{\epsilon}(u)) \cdot \grad\phi_n dx \\
& -\int (u_n-u)(\conv{\grad\reg{\K}}{v})\cdot \grad \phi_n dx
 - \int u (\conv{\grad\reg{\K}}{(v_n-v)}) \cdot \grad \phi_n dx. \\
& := I_1 + I_2 + I_3. 
\end{align*}
We have dropped the time dependence for notational simplicity. 
Since $\reg{A}$ is increasing, we have the desired monotonicity of the diffusion,
\begin{equation*}
I_1 = - \int \left(\reg{A}(u_n) - \reg{A}(u)\right)(u_n-u) dx \leq 0.
\end{equation*}
Using integration by parts we have,
\begin{align*}
I_2 & = \int \Delta \phi_n \grad\reg{\K}\ast v\cdot \grad \phi_n dx \\
& = -\sum_{i,j}\int \partial_{x_j}\phi_n \partial_{x_j x_i}\reg{\K}\ast v \partial_{x_i} \phi_n dx -\sum_{i,j}\int \partial_{x_j}\phi_n \partial_{x_i}\reg{\K}\ast v \partial_{x_j x_i} \phi_n dx \\ 
& = -\sum_{i,j}\int \partial_{x_j}\phi_n \partial_{x_j x_i}\reg{\K}\ast v \partial_{x_i} \phi_n dx -\sum_{i,j}\frac{1}{2}\partial_{x_i}\left( (\partial_{x_j}\phi_n)^2 \right) \partial_{x_i}\reg{\K}\ast v dx \\ 
& = -\sum_{i,j}\int \partial_{x_j}\phi_n \partial_{x_j x_i}\reg{\K}\ast v \partial_{x_i} \phi_n dx +\sum_{i,j}\frac{1}{2}(\partial_{x_j}\phi_n)^2 \partial_{x_i x_i}\reg{\K}\ast v dx.
\end{align*}
Hence, 
\begin{align*}
I_2 & \lesssim \int \abs{D^2\reg{\K} \ast v} \abs{\grad \phi_n}^2 dx \\ 
& \leq \norm{D^2\reg{\K}}_2\norm{v}_2\int \abs{\grad \phi_n}^2 dx. 
\end{align*}
Moreover, 
\begin{align*}
\abs{I_3} & \leq \norm{u}_2\norm{\grad \reg{\K} \ast (v_n - v)}_\infty \norm{\grad \phi_n}_2 \\ 
& \lesssim \norm{\grad \reg{\K}}_3\norm{v_n - v}_{3/2}\norm{\grad \phi_n}_2 \\ 
& \lesssim_\epsilon \norm{v_n-v}_{2}^{2/3}\norm{v_n - v}^{1/3}_1 (1 + \norm{\grad \phi_n}_2^2). 
\end{align*}
By the uniform bound on $\norm{u}_\infty$ (by part \textit{(ii)} above), the uniform bound on the mass in $\mathcal{S}_T$ and the regularization of $\K$,   
\begin{equation*}
\frac{1}{2} \frac{d}{dt}\norm{\grad\phi_n}_2^2 \lesssim_\epsilon \norm{v_n - v}^{2/3}_2 + \left(1 + \norm{v}_2^2 + \norm{v_n - v}^2_2 \right)\norm{\grad \phi_n}_2^2. 
\end{equation*}
Integrating implies for some $C > 0$ depending on the uniform bounds and $\epsilon$ (using $\phi(0) \equiv 0$),  
\begin{equation*}
\norm{\grad \phi_n(t)}_2^2 \leq \int_0^t \exp\left(C\int_s^t 1 + \norm{v(t^\prime)}^2_2 + \norm{v_n(t^\prime) - v(t^\prime)}_2^2 dt^\prime \right) \norm{v_n(s) - v(s)}_2^{2/3}ds. 
\end{equation*}
Since $t \leq T$ we have, 
\begin{equation*}
\norm{\grad \phi_n(t)}_2^2 \lesssim e^{CT +C\norm{v}_{L^2(0,T;L^2)}^2 + C\norm{v_n - v}_{L^2(0,T;L^2)}^2}\int_0^T\norm{v_n(s) - v(s)}_2^{2/3}ds. 
\end{equation*}

By assumption, $\norm{v_n(s) - v(s)}_2 \rightarrow 0$ in $L^2((0,T))$ and hence also pointwise a.e. on $(0,T)$. Since $\norm{v_n(s) -v(s)}^{2/3}_2 \leq 1 + \norm{v_n(s) - v(s)}_2^2$, 
by the dominated convergence theorem we have $\norm{\grad \phi_n}_2 \rightarrow 0$ uniformly on $(0,T)$. 
Therefore, on $\mathcal{S}_T$, $J$ is a continous mapping from $L^2(0,T;L^2)$ to $C([0,T];\dot{H}^{-1})$ and hence by interpolation against the uniform $H^1$ bounds provided by \textit{(iv)} in Lemma \ref{lem:AprioriBoundLP}, is also a continuous mapping from $L^2(0,T;L^2)$ to $L^2(0,T;L^2)$. \\ 

Finally, we apply the Schauder fixed point theorem, which implies there exists a solution $Ju = u$ with $u \in \mathcal{S}_T$ for some $T > 0$.
By the regularization of $\K$, it is straightforward to extend this solution to the regularized system indefinitely and to use a boot-strap argument to show that $u$ 
is a classical solution to \eqref{def:RAE}.
 \end{proof}

Proposition \ref{prop:LERADD} provides a global family of classical solutions $\set{u}_{\epsilon>0}$;
however, we need to prove a priori bounds that are independent of the regularizing parameter $\epsilon$ to prove obtain any compactness and pass to a solution of the original system \eqref{def:ADD}.  
The proof of compactness will strongly depend on the following iteration result. 

\begin{lemma}[Iteration Lemma \cite{Kowalczyk05,CalvezCarrillo06}] \label{dynamic}
Let $0 < T\leq \infty$ and assume that there exists a $c>0$ and $u_c>0$ such that $A'(u)>c$ for all $u>u_c$. Then if 
$\left\|\nabla \kernel\ast u\right\|_{\infty} \leq C_1$ on $[0,T]$ then $\left\|u\right\|_\infty \leq C_2(C_1)\max\{1,M,\left\|u_0\right\|_\infty\}$ on the same time interval. 
\end{lemma}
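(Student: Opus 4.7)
The plan is an Alikakos/Moser iteration on the super-level sets $\{u>k\}$ for $k\geq u_c$: the hypothesis $A'(u)>c$ on $\{u>u_c\}$ renders the diffusion uniformly non-degenerate there, so that the drift $\nabla\K\ast u$, bounded by $C_1$, can be absorbed into the dissipation. Fix $k\geq u_c$ and $p\geq 2$, and test the equation against $(u-k)_+^{p-1}$ (rigorously on the regularized problem of Proposition~\ref{prop:LERADD} and then pass to the limit). Using $A'(u)>c$ together with $(u-k)_+^{p-2}|\nabla u|^2 = (4/p^2)|\nabla (u-k)_+^{p/2}|^2$ on $\{u>k\}$, splitting $u=(u-k)_++k$ in the advection so that it becomes $\tfrac{p-1}{p}\int \nabla(u-k)_+^p\cdot \nabla c + k\int \nabla(u-k)_+^{p-1}\cdot\nabla c$, and applying Cauchy--Schwarz/Young with $\|\nabla c\|_\infty\leq C_1$, one arrives at
\[
\frac{d}{dt}\|(u-k)_+\|_p^p + \frac{2c(p-1)}{p}\|\nabla(u-k)_+^{p/2}\|_2^2 \;\lesssim_c\; C_1^2\,p\,\bigl[\|(u-k)_+\|_p^p + k^2\|(u-k)_+\|_{p-2}^{p-2}\bigr].
\]
The $L^{p-2}$ piece is then interpolated against the conserved mass $\|(u-k)_+\|_1\leq M$ and absorbed by Young's inequality into $\|(u-k)_+\|_p^p$ plus a constant polynomial in $M,k,p$.

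\textbf{Iteration.} Apply Nash's inequality to $w=(u-k)_+^{p/2}$ to get $\|\nabla w\|_2^2 \gtrsim \|(u-k)_+\|_p^{p(1+2/d)}/\|(u-k)_+\|_{p/2}^{2p/d}$, which converts the dissipation into a superlinear feedback on $y_p(t):=\|(u-k)_+(t)\|_p^p$. A standard ODE comparison then yields
\[
\sup_{t\in[0,T]} y_p(t) \leq C(p,C_1,c,M,k,d)\,\bigl[\sup_t y_{p/2}(t)^{1+2/d} + y_p(0) + 1\bigr],
\]
with constants polynomial in $p$. Iterating along $p_n=2^n$, starting from the base $\sup_t y_1(t)\leq M$ and noting $y_{p_n}(0)\leq \|u_0\|_\infty^{p_n-1}M$, the sequence $(\sup_t y_{p_n}(t))^{1/p_n}$ converges geometrically; taking $n\to\infty$ yields $\sup_t\|(u-k)_+\|_\infty \leq C_2(C_1)\max\{1,M,\|u_0\|_\infty\}$, and since $\|u\|_\infty \leq \|(u-k)_+\|_\infty + k$ with $k$ depending only on $u_c$, the claim follows.

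\textbf{Main obstacle.} The principal technical nuisance is the lower-order $L^{p-2}$ term generated by the splitting $u=(u-k)_++k$; controlling it requires the mass interpolation above, and it is precisely this step that produces the explicit $M$ dependence in the final constant. A secondary care point is that $A'$ has no uniform lower bound on $\{u\leq u_c\}$, which is exactly why one iterates on $(u-k)_+$ rather than on $u$: the truncation converts the degenerate problem into a uniformly parabolic one in the only region that matters. Once these two issues are handled, the Alikakos/Moser bootstrap itself is entirely standard.
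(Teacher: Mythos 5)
The paper does not prove Lemma~\ref{dynamic}; it is cited from \cite{Kowalczyk05,CalvezCarrillo06} and used as a black box (in the proof of Lemma~\ref{LinftyG} and of Theorem~\ref{thm:Continuation}). So there is no in-paper proof to compare against.

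That said, your Alikakos/Moser iteration is the standard and correct argument that those references use, and it is consistent with the iteration style deployed elsewhere in the paper. The key structural ideas are all in place and in the right order: truncating at $k\ge u_c$ so that $A'>c$ makes the diffusion uniformly parabolic on the only relevant set; splitting $u=(u-k)_++k$ in the advection; absorbing both pieces of the drift into the dissipation via Cauchy--Schwarz/Young using $\|\nabla\K\ast u\|_\infty\le C_1$; interpolating the lower-order $\|(u-k)_+\|_{p-2}^{p-2}$ term against the conserved mass (with the $p=2$ degenerate case handled by Chebyshev, $\|(u-k)_+\|_0^0 = |\{u>k\}|\le M/k$); converting the dissipation into superlinear feedback via Nash; and iterating along $p_n=2^n$.

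A few small points that would need tightening if this were written out in full. First, the recursion as you write it,
\begin{equation*}
\sup_t y_p \le C(p)\bigl[\sup_t y_{p/2}^{1+2/d}+y_p(0)+1\bigr],
\end{equation*}
is not quite what the ODE comparison delivers: from $\dot y_p \le -a\,y_p^{1+2/d}/Y^{2/d}+b$ one gets $\sup_t y_p \le \max\bigl(y_p(0), (b/a)^{d/(d+2)} Y^{2/(d+2)}\bigr)$, which in $d=2$ puts you in the marginal regime where the exponent on $\sup_t y_{p/2}$ after taking $p_n$-th roots is exactly~$1$; convergence of $\prod_n C(p_n)^{1/p_n}$ (with $C(p)$ polynomial in $p$) then carries the day, but this is worth saying explicitly since the $d=2$ case is the one the paper is specifically trying to cover. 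Second, obtaining the clean $\max\{1,M,\|u_0\|_\infty\}$ (rather than some power of it) requires a little bookkeeping in the induction on $Y_n^{1/p_n}$; again standard but not automatic. Neither issue is a gap in the idea---the sketch is sound and matches the cited references---but these are the two places a careful write-up would need to spend a line or two.
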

    
The primary a priori bound required to obtain compactness is a bound on the $L^\infty$ norm which is independent of $\epsilon$. The proof of Lemma 8 in \cite{BRB10} 
applies directly to the following lemma; however, we include the proof for the completeness. 
In this setting we simplify the proof by applying a homogeneous Gagliardo-Nirenberg inequality, Lemma \ref{lem:GNS}. \\

\begin{lemma}[$L^\infty$ Bound of Solution] \label{LinftyG}
Let $\set{u^\epsilon}_{\epsilon > 0}$ be the classical solutions to \eqref{def:RAE} on $\pd$, with smooth, non-negative, and bounded initial data $\mollify{u_0}$. 
Then there exists $C = C(\norm{u_0}_1,\norm{u_0}_\infty)$ and $T = T(\norm{u_0}_1,\norm{u_0}_p)$ for any $p > d$ such that for all $\epsilon > 0$, 
\begin{align}\label{Linfty}
\left\|u^\epsilon(t)\right\|_{L^\infty(\Real^d)}\leq C
\end{align}
\noindent for all $t\in[0,T]$.
\end{lemma}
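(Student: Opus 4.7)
The plan is to combine a short-time $L^p$ bound uniform in $\epsilon$ (for some $p>d$) with the Iteration Lemma \ref{dynamic}. Once $\|\reg{u}(t)\|_p$ is controlled for such a $p$, a Young-type convolution estimate using $\nabla \reg{\kernel} \in \wkspace{d/(d-1)}$ (uniformly in $\epsilon$, since $\reg{\kernel}$ is a standard mollification of $\kernel$) will yield $\|\nabla \reg{\kernel} \ast \reg{u}\|_\infty \leq C_1$ on $[0,T]$, and Lemma \ref{dynamic}---whose hypothesis on the diffusion is satisfied uniformly in $\epsilon$ by the admissibility condition \textbf{(D2)}---will then produce the claimed $L^\infty$ bound depending only on $\|u_0\|_1$ and $\|u_0\|_\infty$.

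First I would differentiate $\|\reg{u}\|_p^p$ in time and integrate by parts to obtain
\begin{equation*}
\frac{d}{dt}\|\reg{u}\|_p^p = -p(p-1)\int (\reg{u})^{p-2}\, a_\epsilon'(\reg{u})\,|\nabla \reg{u}|^2\, dx - (p-1)\int (\reg{u})^p\, \Delta \reg{\kernel}\ast \reg{u}\, dx.
\end{equation*}
The aggregation term I would bound by $C(p+1)\|\reg{u}\|_{p+1}^{p+1}$ via H\"older's inequality combined with the Calder\'on-Zygmund-type estimate of Lemma \ref{lem:PropAdmissible}(b), whose constant is uniform in $\epsilon$. For the diffusion term, condition \textbf{(D2)} gives $a_\epsilon'(z) \geq A'(z) \geq c$ on $\{\reg{u}>z_c\}$, and the identity $|\nabla (\reg{u})^{p/2}|^2 = (p/2)^2 (\reg{u})^{p-2}|\nabla \reg{u}|^2$ converts the diffusion into control on $c\|\nabla (\reg{u})^{p/2}\|_2^2$, modulo an error depending only on $z_c$ and $M=\|u_0\|_1$ from the bounded low-density region, using the standard high/low decomposition of \cite{Kowalczyk05,CalvezCarrillo06,BRB10}.

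Next, I would apply the homogeneous Gagliardo-Nirenberg inequality to interpolate $\|\reg{u}\|_{p+1}^{p+1} = \|(\reg{u})^{p/2}\|_{2(p+1)/p}^{2(p+1)/p}$ between $\|\nabla (\reg{u})^{p/2}\|_2$ and $\|(\reg{u})^{p/2}\|_2 = \|\reg{u}\|_p^{p/2}$. A direct computation of the interpolation exponents shows that for $p>d/2$ the power of $\|\nabla (\reg{u})^{p/2}\|_2$ appearing is strictly less than $2$, so weighted Young's inequality absorbs the gradient into the diffusion term and produces
\begin{equation*}
\frac{d}{dt}\,\|\reg{u}\|_p^p \leq C \bigl(\|\reg{u}\|_p^p\bigr)^{\beta} + C,
\end{equation*}
for some $\beta > 1$ and $C$ depending only on $p,d,M,z_c$. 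An ODE comparison then provides a bound $\|\reg{u}(t)\|_p \leq C'$ uniform in $\epsilon$ up to a time $T=T(\|u_0\|_1,\|u_0\|_p)$.

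The main obstacle is ensuring the uniformity in $\epsilon$. The Calder\'on-Zygmund constant is inherited from $\kernel$ because convolution with the canonical mollifier does not worsen singular-integral bounds; the lower bound $a_\epsilon' \geq A'$ is built into the definition \eqref{ParabolicReg}, so \textbf{(D2)} is preserved without $\epsilon$-dependence; and on the low-density set $\{\reg{u}\leq z_c\}$, where $a_\epsilon'$ could in principle degenerate as $\epsilon\to 0$, pointwise boundedness together with mass conservation contributes only an $O(z_c^p M)$ error that need not be absorbed by the diffusion. With the uniform short-time $L^p$ bound in hand, weak-type Young's convolution inequality gives the required $L^\infty$ bound on $\nabla \reg{\kernel}\ast \reg{u}$ whenever $p>d$, and Lemma \ref{dynamic} then supplies the $L^\infty$ bound on $\reg{u}$ itself with the claimed dependence on $\|u_0\|_\infty$ and $M$.
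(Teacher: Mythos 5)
Your overall strategy — an $\epsilon$-uniform short-time $L^p$ bound via a differential inequality, then an $L^\infty$ bound on the velocity field, and finally Lemma~\ref{dynamic} — is exactly the paper's, and the pieces you cite (Lemma~\ref{lem:PropAdmissible}(b), Gagliardo--Nirenberg, weighted Young, Lemma~\ref{dynamic}) are the right ones. Two points, however, need tightening.

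First, your treatment of the degenerate diffusion is imprecise in a way that matters. You write that the diffusion term ``converts into control on $c\|\nabla (\reg{u})^{p/2}\|_2^2$, modulo an error depending only on $z_c$ and $M$ from the bounded low-density region.'' This is not correct: the diffusion term only dominates $\int_{\{u>z_c\}} a'_\epsilon(u)\, u^{p-2}|\nabla u|^2\,dx$, and the portion of $\|\nabla u^{p/2}\|_2^2$ coming from $\{u\leq z_c\}$ is simply not controlled — there is no $O(z_c^pM)$ error term that accounts for it, since the gradient there can be arbitrarily large. The correct implementation (and what the paper actually does) is to work with $u_k := (u-k)_+$ for $k\geq z_c$ from the start: then $\nabla u_k$ is supported in $\{u>k\}\subset\{u>z_c\}$, the diffusion genuinely controls $c\|\nabla u_k^{p/2}\|_2^2$, and the $O(k^{p}M)$ errors enter on the \emph{aggregation and interpolation} side through $u\,u_k^{\ell}=u_k^{\ell+1}+ku_k^{\ell}$ and $\|u\|_p^p\lesssim k^{p-1}M+\|u_k\|_p^p$, not on the diffusion side.

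Second, the weak-type Young inequality does not directly deliver the $L^\infty$ bound on $\nabla\reg{\kernel}\ast\reg{u}$. With $\nabla\kernel\in L^{d/(d-1),\infty}$ and $u\in L^p$, weak Young gives $L^q$ with $1/q=1/p-1/d$ and requires $q<\infty$; the endpoint $q=\infty$ fails. The paper instead applies Morrey's inequality to $\nabla\kernel\ast u$ using $\|D^2\kernel\ast u\|_p\lesssim\|u\|_p$ and a near/far split of $\nabla\kernel$ to control $\|\nabla\kernel\ast u\|_p$; alternatively you can split $\nabla\kernel$ into a piece in $L^{p'}$ (possible for $p>d$, since $p'<d/(d-1)$) plus a bounded tail and use H\"older. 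Either fix is routine, but something beyond a bare invocation of weak Young is needed.
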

\begin{proof} 

For notational simplicity we drop the $\epsilon$.  The first step is to obtain an interval for which the $L^p$ norm of $u$ is bounded for some $p > d$.  Following the techniques commonly used, see for example 
\cite{JagerLuckhaus92,Kowalczyk05,CalvezCarrillo06,Blanchet09,BRB10}, we define the function $\reg{u}_{k}=(\reg{u}-k)_+$, for $k>0$.  Due to conservation of mass the following inequality 
provides a bound for the $L^p$ norm of $u$ given a bound on the $L^p$ norm of $u_k$,  
\begin{align}\label{UkvsU}
\norm{u}_p^p \leq C(p)(k^{p-1}\norm{u}_1 + \norm{u_k}_p^p).  
\end{align}  
We look at the time evolution of $\left\|u_k\right\|_{p}$. \\
\\
{\it Step 1:}  
\begin{align*}
\frac{d}{dt}\left\|u_k\right\|_p^p &= p\int u_k^{p-1}\nabla \cdot\left(\nabla A(u) - u\nabla \reg{\kernel}\ast u\right) dx \\
& = -p(p-1)\int A'(u) \nabla u_k \cdot\nabla u dx - p(p-1)\int uu^{p-2}_k\nabla u_k\cdot\nabla \reg{\kernel}\ast u\; dx.  \\ 
&\leq  -\frac{4(p-1)}{p} \int A'(u)\left|\nabla u_k^{p/2}\right|^2dx + p(p-1)\int u_k^{p-1}\nabla u_k \cdot \nabla \reg{\kernel}\ast u\;dx\\
&\quad + kp(p-1)\int u_k^{p-2}\nabla u_k\cdot \nabla \reg{\kernel}\ast u\;dx,
\end{align*}
where we used the fact that for $l>0$
\begin{align}\label{UeqUk}
u(u_k)^l = (u_k)^{l+1} + ku_k^l.  
\end{align}
\noindent Integrating by parts once more and using Lemma \ref{lem:PropAdmissible} $(b)$ gives, 
\begin{align*}
\frac{d}{dt}\left\|u_k\right\|^p_p & \leq -C(p) \int A'(u)\left|\nabla u_k^{p/2}\right|^2dx - (p-1) \int u_k^p \Delta \reg{\K}\ast u dx- kp\int u_k^{p-1} \Delta \reg{\K}\ast u dx\\
& \leq -C(p)\hspace{-5pt}\int \hspace{-5pt} A'(u)\left|\nabla u_k^{p/2}\right|^2dx + C(p)\left\|u_k\right\|_{p+1}^{p}\left\|\Delta \reg{\K} \ast u\right\|_{p+1} + C(p) k\left\|u_k\right\|_{p}^{p-1}\left\|\Delta \reg{\K} \ast u\right\|_{p} \\
& \leq -C(p) \hspace{-5pt} \int \hspace{-5pt}A'(u)\left|\nabla u_k^{p/2}\right|^2dx + C(p)\left(\left\|u_k\right\|_{p+1}^{p+1} + \left\|u\right\|^{p+1}_{p+1}\right) + C(p)k\left(\left\|u_k\right\|_{p}^{p} + \left\|u\right\|^p_{p}\right).
\end{align*}
Now, using \eqref{UkvsU} we obtain that
\begin{align*}
\frac{d}{dt}\left\|u_k\right\|_p^p dx &\leq -C(p)\hspace{-5pt}\int \hspace{-5pt} A'(u)\left|\nabla u_k^{p/2}\right|^2dx + C(p)\left\|u_k\right\|_{p+1}^{p+1} + C(p,k)\left\|u_k\right\|_{p}^{p} + C(p,k,M).
\end{align*} 
An application of the Homogeneous Gagliardo-Nirenberg inequality gives that for any $p$ such that $d<2(p+1)$ (e.g. Lemma \ref{lem:GNS} in the Appendix):
\begin{align*}
\left\|u\right\|_{p+1}^{p+1}\lesssim \left\|u\right\|^{\alpha_2}_{p}\left\|\nabla u^{p/2}\right\|^{\alpha_1}_{L^2(\Real^d)},
\end{align*}
where $\alpha_1 = d/p,\; \alpha_2 = (p+1)-d/2$.  From the the Young's inequality $a^rb^{(1-r)} \leq ra + (1-r)b$ (with
$a= \delta \left\|\nabla u^{p/2}\right\|^2_{L^2(\Real^d)}$ and $r=\alpha_1/2$) we obtain  
\begin{align*}
\left\|u\right\|_{p+1}^{p+1}&\lesssim \frac{1}{\delta^{\beta_1}}\left\|u\right\|^{\beta_2}_{p} + r\delta\left\|\nabla u^{p/2}\right\|^2_2.  
\end{align*}
Above $\beta_1,\;\beta_2 > 1$.  For $k$ large enough we have that $A'(u)>c>0$ over $\set{u > k}$; hence, if we choose $\delta$ small enough we obtain the final differential inequality:
\begin{align}\label{DILp}
\frac{d}{dt}\left\|u\right\|^p_p \lesssim C(p,\delta)\left\|u_k\right\|^{\beta_2}_p + C(p,k,r\delta) \left\|u_k\right\|_{p}^{p} + C(p,k,\left\|u_0\right\|_{1}).
\end{align}
The inequality \eqref{DILp} in turns gives a $T_p=T(p)>0$ such that $\left\|u_k\right\|_p$ is bounded on $[0,T_p]$ and by \eqref{UkvsU} then 
$\norm{u}_p$ remains bounded on the same time interval.  \\

\noindent {\it Step 2:}  We are not done since the bounds in \eqref{DILp} blow up as $p \rightarrow \infty$. 
We follow the by now standard procedure and prove that the velocity field is bounded in $L^\infty(\Real^d)$ on some time interval $[0,T]$.  
Then this allows us to invoke Lemma \ref{dynamic} and obtain the desired bound.
Since $\grad K \in L_{loc}^1$ and $\grad K \mathbf{1}_{\Real^d \setminus B_1(0)} \in L^q$ for all $q > d/(d-1)$ (by Lemma \ref{lem:PropAdmissible} $(a)$), then for all $p > d/(d-1)$ 
\begin{equation*}
\norm{\conv{\grad \K}{u}}_p \leq \norm{\grad \K \mathbf{1}_{B_1(0)}}_1\norm{u}_p + \norm{\grad \K \mathbf{1}_{\Real^d\setminus B_1(0)}}_pM.
\end{equation*}
By Lemma \ref{lem:PropAdmissible} (b) we also have, for all $p$, $1 < p < \infty$, 
\begin{equation*}
\norm{\conv{D^2\K}{u}}_p \lesssim \norm{u}_p. 
\end{equation*} 
Then by Morrey's inequality we have $\grad \K \ast u^\epsilon \in L^\infty(\Real^d)$ uniformly in $\epsilon$ by choosing some $p > d$ and invoking step one.  Then Lemma \ref{dynamic} concludes the proof. 
\end{proof} 

\begin{proof} (Theorem \ref{thm:loc_theory})
For all $\epsilon > 0$, let $u^\epsilon$ be the solution to \ref{def:RAE} provided by Proposition \ref{prop:LERADD}. 
Lemma \ref{LinftyG} provides a uniform-in-$\epsilon$ a priori upper bound on $\norm{u^\epsilon(t)}_\infty$ on some time inverval $(0,T)$ as well as a uniform bound on $\norm{\grad \K \ast u}_\infty$.  
Hence, we may easily deduce the following a priori bound on the second moment: 
\begin{align*}
\frac{d}{dt}\mathcal{M}_2(\reg{u}(t)) & = 2d\int \reg{A}(\reg{u}) dx - 2\int\int \reg{u} x \cdot \grad \reg{\K} \ast \reg{u} dx \\
 & \leq 2d\int \reg{A}(\reg{u}) dx + 2\norm{\grad \reg{\K} \ast \reg{u}}_\infty \norm{u}_1^{1/2} \mathcal{M}_2(\reg{u}(t))^{1/2} \\
& \lesssim_{\norm{u}_1} \int \reg{A}(\reg{u}) dx + \left(\norm{\reg{u}}_\infty + \norm{u}_1\right)\left(1 + \mathcal{M}_2(\reg{u}(t)) \right). 
\end{align*}
The first term is bounded in terms of $\norm{u}_1$ and $\norm{u}_\infty$ as in the proof of Lemma \ref{lem:AprioriBoundLP}.
By Gr\"onwall's inequality we therefore have a uniform-in-$\epsilon$ a priori upper bound on the second moment.
Similarly, one may alter the proof of \textit{(iii)} in \ref{lem:AprioriBoundLP} to bound $\norm{\grad A}_{L^2(0,T;L^2)}$ independent of $\epsilon$ using the uniform bound on $\norm{\reg{u}}_\infty$.
Using these a priori bounds we may follow the arguments of \cite{BRB10} (see also \cite{BertozziSlepcev10}) and prove the following lemma. 
\begin{lemma}[Local Pre-compactness in $L^1(\Real^d_T)$]\label{precomp}   
The sequence of solutions obtained via Proposition \ref{prop:LERADD}, $\{u_\epsilon\}_{\epsilon > 0}$, is pre-compact in $L^1((0,T) \times \Real^d)$.  
\end{lemma}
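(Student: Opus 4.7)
The plan is to combine a local Aubin--Lions argument applied to $\reg{A}(\reg{u})$ with tightness at spatial infinity coming from the uniform second moment bound, then transfer compactness back to $\reg{u}$ via the strict monotonicity of $A$.

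First I would gather the uniform--in--$\epsilon$ bounds that the preceding paragraph establishes: $\norm{\reg u}_{L^\infty(\pdRd)} \lesssim 1$, $\sup_{t \in (0,T)}\mathcal{M}_2(\reg u(t)) \lesssim 1$, mass conservation $\norm{\reg u(t)}_1 = M$, $\norm{\grad \reg A(\reg u)}_{L^2(0,T;L^2)} \lesssim 1$, and $\norm{\grad \reg\K \ast \reg u}_{L^\infty(\pdRd)} \lesssim 1$. From the regularized equation \eqref{def:RAE}, these bounds yield $\partial_t \reg u$ uniformly bounded in $L^2(0,T; H^{-1}(B_R))$ for each $R>0$: the term $\Delta \reg A(\reg u)$ is controlled by the gradient bound, while the advective term $\grad \cdot (\reg u\,\grad \reg\K \ast \reg u)$ has an $L^\infty$ integrand on $B_R$. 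The chain rule together with the uniform $L^\infty$ bound $a_\epsilon'(\reg u) \leq A'(\norm{\reg u}_\infty) + 2\epsilon$ then upgrades this to $\partial_t \reg A(\reg u)$ bounded in $L^2(0,T; H^{-1}(B_R))$.

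Next I would apply Aubin--Lions to $\reg A(\reg u)$. The uniform $L^\infty$ bound on $\reg u$ combined with continuity of $\reg A$ makes $\reg A(\reg u)$ uniformly bounded in $L^\infty(\pdRd)$, which together with the gradient bound yields $\reg A(\reg u)$ uniformly bounded in $L^2(0,T; H^1(B_R))$. Using the chain of embeddings $H^1(B_R) \subset\subset L^2(B_R) \subset H^{-1}(B_R)$, Aubin--Lions produces a subsequence along which $\reg A(\reg u) \to w$ strongly in $L^2((0,T)\times B_R)$ for each $R$, via a diagonal extraction.

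The transfer step relies on the observation that $\reg A(z) - A(z) = \int_0^z (a_\epsilon'(s)-A'(s))\,ds \in [\epsilon z, 2\epsilon z]$, so $\reg A \to A$ uniformly on the fixed compact interval $[0,\norm{\reg u}_\infty]$, and hence $A(\reg u) \to w$ in $L^2_{loc}(\pdRd)$. Passing to a further subsequence gives a.e.\ convergence, and because \textbf{(D1)} makes $A$ continuous and strictly increasing on $[0,\norm{\reg u}_\infty]$ with continuous inverse, we conclude $\reg u \to A^{-1}(w) =: u$ a.e. The uniform $L^\infty$ bound and dominated convergence then promote this to strong convergence of $\reg u$ in $L^p((0,T)\times B_R)$ for every $1 \leq p < \infty$. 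Tightness at infinity via Chebyshev, $\int_{\abs{x}>R} \reg u\, dx \leq R^{-2}\mathcal{M}_2(\reg u(t)) \lesssim R^{-2}$, combined with local strong convergence yields pre-compactness in $L^1(\pdRd)$. The main technical obstacle is that one cannot apply Aubin--Lions directly to $\reg u$, because no uniform gradient bound on $\reg u$ is available (only on $\reg A(\reg u)$, and $a_\epsilon'$ can degenerate as $\epsilon \to 0$ where $\reg u$ is small); working with $\reg A(\reg u)$ and then inverting the strictly monotone $A$ sidesteps this degeneracy.
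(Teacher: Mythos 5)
Your overall strategy --- obtain strong local convergence of $\reg A(\reg u)$, transfer to $\reg u$ via strict monotonicity of $A$, and control spatial infinity with the uniform second moment bound --- is the right one, and the final transfer, $L^\infty$ promotion, and tightness steps are fine. However, there is a genuine gap at the point where you try to feed $\reg A(\reg u)$ into Aubin--Lions.

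You assert that from $\partial_t \reg u$ bounded in $L^2(0,T;H^{-1}(B_R))$ and the uniform bound $a_\epsilon'(\reg u)\leq A'(\norm{\reg u}_\infty)+2\epsilon$ one can conclude, via the chain rule, that $\partial_t \reg A(\reg u)=a_\epsilon'(\reg u)\,\partial_t\reg u$ is bounded in $L^2(0,T;H^{-1}(B_R))$ uniformly in $\epsilon$. This does not follow: $H^{-1}$ is not an $L^\infty$-module. If $g=\partial_j h$ with $h\in L^2$ and $m\in L^\infty$, then for $\phi\in H^1_0$ one has $\langle mg,\phi\rangle=-\int h\,\partial_j(m\phi)\,dx$, and the term $\int h\,\phi\,\partial_j m\,dx$ cannot be controlled by $\norm{m}_\infty\norm{h}_2\norm{\phi}_{H^1}$ unless $\grad m$ is controlled. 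Here $m=a_\epsilon'(\reg u)$ and there is no uniform-in-$\epsilon$ bound on $\grad\bigl(a_\epsilon'(\reg u)\bigr)$, so the uniform $L^2(0,T;H^{-1})$ bound on $\partial_t\reg A(\reg u)$ is not established, and Aubin--Lions cannot be applied to $\reg A(\reg u)$ as you propose.

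The paper's route (via \cite{BRB10,BertozziSlepcev10}) avoids this by never differentiating $\reg A(\reg u)$ in time. Instead one uses the time-translation estimate
\begin{equation*}
\int_0^{T-h}\!\!\int_{B_R}\bigl(\reg A(\reg u(t+h))-\reg A(\reg u(t))\bigr)\bigl(\reg u(t+h)-\reg u(t)\bigr)\,dx\,dt
=\int_0^{T-h}\!\!\int_t^{t+h}\!\!\bigl\langle\partial_s\reg u(s),\bigl(\reg A(\reg u(t+h))-\reg A(\reg u(t))\bigr)\eta_R\bigr\rangle\,ds\,dt\lesssim h,
\end{equation*}
which only requires $\partial_t\reg u\in L^2(0,T;H^{-1})$ and $\reg A(\reg u)\in L^2(0,T;H^1)$, both of which you already have. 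Since $\reg A$ is increasing with $(\reg A)'\leq A'(\norm{\reg u}_\infty)+2\epsilon$ uniformly, one has $\bigl(\reg A(a)-\reg A(b)\bigr)(a-b)\gtrsim\bigl(\reg A(a)-\reg A(b)\bigr)^2$, so this gives uniform time equicontinuity of $\reg A(\reg u)$ in $L^2((0,T)\times B_R)$. Combined with the spatial equicontinuity from the $H^1$ bound, the Riesz--Fr\'echet--Kolmogorov criterion yields precompactness of $\set{\reg A(\reg u)}$ in $L^2_{loc}$; your remaining steps (inverting $A$, dominated convergence, Chebyshev tightness) then go through unchanged. So the idea is sound, but the step you want from Aubin--Lions must be replaced by this monotonicity/translation argument.
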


Equi-continuity relies on the fact that $\norm{A(u^\epsilon)}_{L^2(0,T;H^1(D))} \leq C$ uniformly in $\epsilon$.  
The proof depends on the domain size but when combined with tightness obtained from a priori bounds on the second moment (see below), this suffices.  
Hence, using Lemma \ref{precomp} we may extract a subsequence $u_{\epsilon_k} \rightarrow u$ in $L^1((0,T) \times \Real^d)$. 
Additionally, since $\norm{u_{\epsilon_k}}_\infty$ is uniformly bounded, by lower-semicontinuity, we also have $u \in L^\infty((0,T)\times \Real^d)$. 
Once the limit is extracted, we may further upgrade the convergence to $C([0,T];L^1)$ as in \cite{BertozziSlepcev10,BRB10} and hence to $C([0,T];L^p)$ for all $1 \leq p < \infty$ due to the uniform $L^\infty$ bound. 
The last remaining technical point is to ensure that the limit is indeed a weak solution of the original system in the sense of Definition \ref{def:WSRD}.
This may be done using a limiting argument and the a priori bounds available on $u$ as in \cite{BRB10,BertozziSlepcev10}. \\

Proving the energy dissipation inequality in $\Real^2$ also presents an additional complication. 
As $\K$ is potentially logarithmically unbounded at infinity and hence the convergence of the interaction energy is non-trivial, we cannot follow the exact argument from \cite{BRB10}.
Hence, let $u(t)$ be the unique weak solution constructed above and $\set{u^\epsilon}_{\epsilon > 0}$ be the solutions 
to the regularized system \eqref{def:RAE}. 
We may follow the proof in \cite{BRB10} to prove convergence of the entropy and the energy dissipation,
hence, we need only focus on the interaction energy.  
To this end,
\begin{align*}
\int \reg{u}(t) \reg{\K} \ast \reg{u}(t) dx - \int u(t) \K \ast u(t) dx & = \int u(\reg{\K} - \K)\ast u + \int u\reg{\K}\ast(\reg{u} - u)dx \\ & \quad + \int(\reg{u} - u)\reg{\K} \ast \reg{u} dx \\
& := T1 + T2 + T3.  
\end{align*} 
Since $\reg{\K}(x) = \K(x)$ for all $\abs{x} > 1$ we have, 
\begin{align*}
\abs{T1} &\leq \norm{u}_1\norm{(\reg{\K} - \K) \ast u}_\infty \\
& \leq \norm{u}_1\norm{u}_\infty \norm{\reg{\K} - \K}_{L^1(B_1(0))} \rightarrow 0.   
\end{align*}
Now consider $T2$. By the duality of $BMO$ and $\mathcal{H}^1$ and Lemma \ref{lem:Hardy} we have,   
\begin{align*}
\abs{T2} & \leq \norm{u}_1\norm{\reg{\K} \ast (\reg{u} - u)}_\infty \\
& \lesssim \norm{\reg{\K}}_{BMO}\norm{\reg{u} - u}_{\mathcal{H}^1} \\
& \lesssim \norm{\reg{u} - u}_p + \int \abs{x}\abs{\reg{u} - u} dx, 
\end{align*}
for any $1 < p < \infty$. However, $\reg{u} \rightarrow u$ in $C([0,T];L^p)$ for all such $p$, so the first term is not an issue. 
To deal with the second term we first use Cauchy-Schwarz 
\begin{equation*}
\int \abs{x}\abs{\reg{u} - u} dx \leq \left( \int \abs{x}^2\abs{\reg{u} - u} dx \right)^{1/2}\left(\int \abs{\reg{u} - u} dx \right)^{1/2}.  
\end{equation*}
However, since $\reg{u} \rightarrow u$ in $C([0,T];L^1)$ and both $\reg{u}$ and $u$ have uniformly bounded second moments on $[0,T]$ we have that $T2 \rightarrow 0$. 
The final term, $T3$, follows similarly. Hence, the energy dissipation inequality holds in $\Real^2$. 
\end{proof}
\subsection{Local Existence for Spatially Inhomogeneous Patlak-Keller-Segel Systems} 
In this section we prove local existence of solutions of \eqref{def:InhKS}. Once again, we begin with a regularization, which in this case is given by
\begin{equation} \label{def:RegdInhKS}
\left\{
\begin{array}{l}
  u^\epsilon_t + \nabla \cdot (u^\epsilon \nabla \mollify\reg{c}) = \Delta A^\epsilon(u^\epsilon), \\
  -\grad \cdot (a(x)\nabla \reg{c}) + \gamma(x)\reg{c} = \mollify u^\epsilon, \\ 
  u^\epsilon(0,x) = \mollify{u_0}(x), \;\; u_0 \in L_+^1(\Real^d;(1+\abs{x}^2)dx)\cap L^\infty(\Real^d),
\end{array}
\right.
\end{equation}
where $A^\epsilon(\reg{u})$ is the parabolic regularization given by \eqref{ParabolicReg}.
This particular regularization is convenient because \eqref{def:RegdInhKS} satisfies a natural energy dissipation inequality,
\begin{align*}
\F^\epsilon(u^\epsilon) & =\int \Phi^\epsilon(u^\epsilon) dx - \frac{1}{2}\int u^\epsilon \mollify{c^\epsilon} dx.  
\end{align*} 
Indeed, using 
\begin{align*}
\frac{1}{2}\frac{d}{dt}\int\reg{u}\mollify c \;dx = \int \reg{u}_t\mollify c \;dx,
\end{align*}
one can formally obtain 
\begin{align*}
\frac{d}{dt}\F^\epsilon(u^\epsilon)= -\int \frac{1}{\reg{u}}\abs{ (A^\epsilon)^\prime(u^\epsilon)\grad \reg{u} - \reg{u} \grad \mollify{c}}^2 \;dx.    
\end{align*} 
Moreover, the regularization of the chemo-attractant only occurs on the right-hand side, which proves to be of some technical use.

The proof of Theorem \ref{thm:loc_theory_variable} is very similar to that of Theorem \ref{thm:loc_theory}: one simply needs to check that $c$ satisfies properties sufficiently similar to those satisfied by convolution-type systems.  Most of these properties are obatained via standard elliptic estimates, which we state and prove in the Appendix to be referenced as needed.
As was done for the convolution-type systems, we first need equivalent estimates as those provided by Lemma \ref{lem:AprioriBoundLP}
for the system
\begin{equation} \label{def:RegdInhKS_lin}
\left\{
\begin{array}{l}
  u^\epsilon_t + \nabla \cdot (u^\epsilon \nabla \mollify{c}) = \Delta A^\epsilon(u^\epsilon), \\
  -\grad \cdot (a(x)\nabla c^\epsilon) + \gamma(x)c^{\epsilon} = \mollify{\tilde{u}}, 
\end{array}
\right.
\end{equation}
where $\tilde{u}\in L^2(0,T;L^2)\cap L^\infty(0,T;L^1)$ is given.  As above, this will allow us to obtain the necessary compactness and a priori estimates to apply the Schauder fixed point theorem
and obtain a family of classical solutions for \eqref{def:RegdInhKS_lin}, as in Proposition \ref{prop:LERADD}.     

\begin{proof}(\textbf{Theorem} \ref{thm:loc_theory_variable})
As this proof follows that of Theorem \eqref{thm:loc_theory} we simply mention and work out the differences.  The first step is to prove 
the existence of solutions to the regularized system \eqref{def:RegdInhKS_lin}.  This is done, as in the proof of Proposition \eqref{prop:LERADD}, 
by a fixed point argument.  Hence, we need to show that for a given $\tilde{u}$ the system \eqref{def:RegdInhKS_lin} has a solutions that
satisfies the appropriate bounds.  
Examining the proof of Lemma \eqref{lem:AprioriBoundLP} we see that all the bounds hold provided we have 
the bounds on $\norm{\Delta \mollify c}_\infty $ and $\norm{\nabla \mollify c}_\infty$.  Using Young's inequality for convolutions,
when $\gamma >0$ we obtain
\begin{align*}
\norm{\Delta \mollify{c}}_\infty & \lesssim_{\epsilon} \norm{c}_2 \lesssim_\gamma \norm{\tilde{u}}_2 \\
\norm{\grad \mollify{c}}_\infty  & \lesssim_{\epsilon}\norm{c}_2 \lesssim_\gamma \norm{\tilde{u}}_2.
\end{align*}

The estimates are different in the case when $\gamma \geq 0$, as we only have estimate \eqref{ineq:HomogCEstimate} (necessarily we are assuming $d \geq 3$).  We instead have, 
\begin{align*}
\norm{\Delta \mollify{c}}_\infty & \lesssim_{\epsilon} \norm{c}_{\frac{6d}{5d-12}} \lesssim \norm{\tilde{u}}_{6/5} \leq \norm{\tilde{u}}^{1/3}_2\norm{\tilde{u}}^{2/3}_1.
\end{align*}
Although this estimate involves a lower power of $\norm{\tilde{u}}_2$, this does not pose a problem for our needs. 
A similar bound for $\norm{\grad \mollify{c}}_\infty$ can be obtained.  
These bounds are sufficient to obtain a family of solutions on $\set{\reg{u}}_{\epsilon>0}$ on $[0,T_\epsilon)$ by following the proof
of Proposition \ref{prop:LERADD}.  Next, we need to prove a uniform-in-$\epsilon$ $L^\infty$ bound of the solutions, similar to Lemma \ref{LinftyG}.  
Recall that the first step in the proof of Lemma \ref{LinftyG} was to obtain a bound on $\norm{\reg{u}}_p$ for $p>1$.  This required a bound of the form
\begin{align}\label{ineq:general_cz}
\norm{\Delta c^\epsilon}_{p}\leq C(p,a,d)\norm{\reg{u}}_p,
\end{align}
for example, to obtain the inequalities after \eqref{UeqUk}. If $\gamma(x)$ is strictly positive, a proof of \eqref{ineq:general_cz} can be found in Lemma \ref{lem:inhom_bds_CZ} in the Appendix.  
For the remaining cases, $d\geq 3$, this estimate is replaced by
\begin{align*}
\norm{\Delta c^\epsilon}_{p}\leq C(p,a,d)\left(\norm{\reg{u}}_p +\norm{\reg{u}}_1\right),
\end{align*}
which is obtained from Lemma \ref{lem:HomogGrad}, noting that $\frac{pd}{2p+d}<p$ and interpolating between the $L^p$ and $L^1$ norms.  
The the next step was to use the bounds on the $L^p$ norms of $\reg{u}$ to deduce a bound on $\norm{\nabla \reg{c}}_\infty$, independent of $\epsilon$.  
By Morrey's inequality and then the Gagliardo-Nirenberg inequality \eqref{ineq:WeightSE2},
\begin{align*}
\norm{\grad c}_\infty & \lesssim \norm{D^2 c}_p + \norm{\grad c}_p \\
& \lesssim \norm{D^2c}_p + \norm{c}_p. 
\end{align*}
If $\gamma$ is strictly positive then we may use Lemma \ref{lem:inhom_bds_CZ} and we have 
\begin{equation*}
\norm{\grad c}_\infty \lesssim \norm{u}_p. 
\end{equation*}
If $\gamma$ is not strictly positive we have to use the $L^p$ estimate \eqref{ineq:HomogCEstimate} and Lemma \ref{lem:HomogGrad}, which holds in $d \geq 3$ for $p > d/(d-2)$: 
\begin{equation*}
\norm{\grad c}_\infty \lesssim_{a,p,d} \norm{u}_p + \norm{u}_{\frac{pd}{d+2}} \lesssim \norm{u}_p + \norm{u}_{1}. 
\end{equation*}
Hence, we may proceed as in Lemma \ref{LinftyG} and apply Lemma \ref{dynamic} and deduce the requisite local in time, uniform-in-$\epsilon$ $L^\infty$ estimates.

The last primary estimate we need before we can obtain enough pre-compactness to extract a subsequence which converges towards a solution to the original model \eqref{def:InhKS} is
a uniform estimate on $\norm{A(u)}_{L^2(0,T,H^1)}$.  Examining the proof of estimate $(iii)$ in Lemma \ref{lem:AprioriBoundLP}, one can see that the bound on
$\norm{\nabla c}_\infty$ is sufficient.  
Analogously as to above, pre-compactness of $\set{\reg{u}}_{\epsilon>0}$ follows from these uniform estimates. 
Finally, to obtain a solution to \eqref{def:InhKS} the only real difference with convolution-type systems is that we need to check that $\norm{\reg{u}\nabla \reg{c}-u\nabla c}_{L^2}\rightarrow 0$ as $\epsilon\rightarrow 0$.
Indeed, we know that
\begin{align*}
\norm{\reg{u}\nabla \reg{c}-u\nabla c}_{L^2} &\leq \norm{\reg{u}}_{L^\infty}\norm{\nabla(\reg{c}-c)}_{L^2} + \norm{\nabla c}_\infty\norm{\reg u - u}_{L^2}.  
\end{align*}  
 Furthermore, again using \eqref{ineq:WeightSE2} we have  
 \begin{align*}
\norm{\nabla(\reg{c}-c)}_{L^2}&\lesssim \norm{D^2(\reg{c}-c)}_{L^2} +\norm{\reg{c}-c}_{L^2}. 
 \end{align*}
The result follows from Lemma \ref{lem:inhom_bds_CZ} or Lemma \ref{lem:HomogGrad} and the convergence in $L^p$ of $\mollify{\reg{u}}$ to $u$.  
Hence, one may deduce that $u$ is a weak solution to \eqref{def:InhKS} in the sense of Definition \ref{def:WSRD}.  
We are left to prove the energy dissipation inequality \eqref{EnrDiss2}.  Convergence of the entropy term follows exactly as the proof of inequality \eqref{EnrDiss}, given in Theorem \ref{thm:loc_theory}.
The interaction energy on the other hand requires a little more. As above, 
\begin{align*}
\norm{\mollify{\reg{u}}\reg{c}-uc}_{L^1} \leq \norm{\mollify{\reg{u}}}_1\norm{\reg{c}-c}_{L^\infty} + \norm{\reg{c}}_\infty\norm{\mollify{\reg u} - u}_{L^1}.  
\end{align*}  
Given that $\reg{u}\rightarrow u$ in $L^1$ and that we have uniform bounds on $\norm{\reg{c}}_\infty$ by 
 and $\norm{\reg{u}}_{L^1}=M$ we only 
have to verify the uniform convergence of $\reg{c}$ to $c$.  
It follows from similar arguments as above, using Morrey's inequality and \eqref{ineq:WeightSE2} 
\begin{align*}
\norm{\reg{c}-c}_{L^\infty}&\lesssim \norm{\nabla(\reg{c}-c )}_p + \norm{\reg{c}-c}_p\\
&\lesssim \norm{D^2(\reg{c}-c )}_p +\norm{\reg{c}-c}_p, 
\end{align*}
Hence, the result follows from Lemma \ref{lem:HomogGrad} and \eqref{ineq:HomogCEstimate} or Lemma \ref{lem:inhom_bds_CZ} and the $L^p$ uniform-in-time convergence of $\reg{u}$ to $u$.   
The last step that requires attention is proving the convergence of the free energy dissipation (also called the generalized Fisher information), 
\begin{equation*}
\int_0^t D[u(s)]ds = \int_0^t\int \frac{1}{u(x,s)}\left|A'(u(x,s))\nabla u(x,s) -u(x,s)\grad c(x,s)\right|^2dxds. 
\end{equation*}
However, as in the convolution-type case in \cite{BertozziSlepcev10,BRB10}, it follows from a weak lower-semicontinuity result due to Otto \cite{Otto01} (see also \cite{CarrilloEntDiss01}).
\end{proof}

\section{Continuation, Subcritical and Small Data Theory} \label{sec:ContSubEtc} 
\subsection{Continuation for Spatially Inhomogeneous Patlak-Keller-Segel Systems} \label{sec:ContInhKS}
The proof of Theorem \ref{thm:Continuation} is very similar to that of Lemma \ref{LinftyG} and largely follows the work of \cite{CalvezCarrillo06,Blanchet09}. 
Both of these results quantify the strength of the nonlocal advection term with the inequality 
\begin{equation*}
\norm{\Delta\K \ast u}_p \lesssim_{p,d} \norm{u}_p. 
\end{equation*}

For the system \eqref{def:InhKS}, unless $\gamma(x)$ is strictly positive, we only have the estimate provided by Lemma \ref{lem:HomogGrad} in the Appendix: for $p > d/(d-2)$, 
\begin{equation*}
\norm{\Delta c}_p \lesssim_{p,d} \norm{u}_p + \norm{u}_{\frac{pd}{2p+d}} \lesssim \norm{u}_p + \norm{u}_1.  
\end{equation*}  

The lower-order term (which is constant due to conservation of mass) can be safely added into other lower-order terms in the argument which are already present due to the inequality \eqref{UkvsU}.  
The difference between Lemma \ref{LinftyG} and Theorem \ref{thm:Continuation} is the use 
of a better Gagliardo-Nirenberg inequality to control the highest order contribution from the advection term: 
\begin{equation*}
\norm{u_k}^{p+1}_{p+1} \lesssim \norm{u_k}^{\alpha_2(p+1)}_{\max\left(1,\frac{2-m}{2-m^\star}\right)}\int \abs{\grad u_k^{ (p+m^\star-1)/2 }}^2 dx, 
\end{equation*}
with $\alpha_2 > 0$ determined via scale invariance and homogeneity (Lemma \ref{lem:GNS}). 
The latter factor can be related to the nonlinear diffusion and the former factor is the quantity appearing in \eqref{cond:equint}. The key point is that the condition \eqref{cond:equint} then implies we may make the highest order contribution of the nonlinear advection arbitrarily small compared to the diffusion by choosing $k$ large. 

\subsection{Subcritical and Small Data Theory}
\begin{proof}(\textbf{Theorem \ref{thm:GBDs}}) 
We proceed with the formal computations, noting that the computations are completely rigorous by appealing to the parabolic regularization above. 
The proof follows a similar outline to the proof of Lemma \ref{LinftyG} or Theorem \ref{thm:Continuation}, which is now standard in the literature on PKS and related models. 
As remarked above, the subcritical case is a standard variant of the proof of Theorem \ref{thm:Continuation} \cite{Kowalczyk05,CalvezCarrillo06,BRB10}; therefore, we only prove \textit{(ii)}. 
We first prove the result for convolution-type systems and for variable-coefficient systems where $\gamma$ is strictly positive.  \\

Let $\overline{q} = \max((2-m)/(2-m^\star),1)$. We have two cases to consider: $m^\star = 2 - 2/d$ and $m^\star < 2-2/d$.
We make this distinction in order to take advantage of the additional regularity provided by \textit{(c)} of Lemma \ref{lem:PropAdmissible} when $m^*<2-2/d$; hence, in this case we let $\beta$ be such that  $m^\star = 1 + 1/\beta - 2/d$ for some $1 < \beta \leq d/2$. Note that by Lemma \ref{lem:PropAdmissible} $D^2\K \in L^{\beta,\infty}$. \\ 

\noindent \textit{Step 1 ($L^p$ bound):}  Choose $p$ such that if $m^*= 2-2/d$ then $\overline{q} \leq p < \infty$ else $\overline{q} \leq p \leq \beta/(\beta-1)$. Computing the time evolution of $\norm{u(t)}_p$ making use of \eqref{cond:Aprime_GBDs2} we obtain,
\begin{equation*}
\frac{d}{dt} \norm{u}_p^p \leq -4(p-1)\delta\int u^{m-1}\abs{\grad u^{p/2}}^2 dx - (p-1)\int u^p \Delta cdx. 
\end{equation*}
If $m^\star = 2-2/d$, we may use H\"older's inequality and then Lemma \ref{lem:PropAdmissible} for convolution-type systems or Lemma \ref{lem:inhom_bds_CZ} for variable coefficient systems to obtain, 
\begin{equation*}
\abs{\int u^p \Delta c dx} \lesssim_{p,\K} \norm{u}_{p+1}^{p+1}.
\end{equation*}
On the other hand, if the system is of convolution-type and $\beta > 1$, since $D^2\K \in L^{\beta,\infty}$ we can apply \eqref{ineq:GGHLS}, 
\begin{align*}
\abs{\int u^p \conv{\Delta \K}{u} dx}
& \lesssim \norm{u}^{p}_{\alpha p}\norm{u}_t\wknorm{\Delta\K}{\beta},
\end{align*}
with the scaling condition $1/\alpha + 1/t + 1/\beta = 2$. 
Choosing $t = \alpha p$ implies,
\begin{equation}
\frac{1}{\alpha} = \frac{2 - 1/\beta}{1 + 1/p}. \label{eq:alpha2}
\end{equation}
Notice that from our choice of $p$, $1 \leq 1/p + 1/\beta$; thus, $1/\alpha \leq 1$.  When $m^\star = 2 - 2/d$, we define $t = \alpha p = p+1$. 
By the Gagliardo-Nirenberg inequality Lemma \ref{lem:GNS}, using $m \leq m^\star$ we have, 
\begin{equation}
\norm{u}_{\alpha p} \lesssim \norm{u}^{\alpha_2}_{\overline{q}}\left(\int \abs{\grad u^{ (p+m-1)/2 }}^2 dx \right)^{\alpha_1/2} \label{ineq:GNS_Gbds2},
\end{equation}
with 
\begin{equation*}
\alpha_1 = \frac{2d}{p}\left(\frac{(p - \overline{q}/\alpha)}{ \overline{q}(2 - d) + dp + d(m-1)}\right),
\end{equation*}
and
\begin{equation*}
\alpha_2 = 1 - \alpha_1(p+m-1)/2.
\end{equation*}
Notice that by our choices, $\alpha_1(p+1)/2 = 1$. 
Hence \eqref{ineq:GNS_Gbds2} implies,
\begin{equation}
\frac{d}{dt} \norm{u}_p^p \leq \left(C(p)\norm{u}_{\overline{q}}^{\alpha_2(p+1)} - C(p)\delta\right)\int u^{m-1}\abs{\grad u^{p/2}}^2 dx. \label{ineq:gwp_2}
\end{equation} 
If $m = m^\star$ then $\overline{q} = 1$  and for mass sufficiently small we have $\norm{u(t)}_p \in L^{\infty}(\Real^+)$. 
If $m < m^\star$ then $\overline{q} > 1$ and $\norm{u(t)}_{\overline{q}}$ is not conserved. However, in that case \eqref{ineq:gwp_2} also holds for $p = \overline{q}$, and therefore if
$\norm{u_0}_{\overline{q}}$ is sufficiently small, \eqref{ineq:gwp_2} implies $\norm{u(t)}_{\overline{q}} \leq \norm{u_0}_{\overline{q}}$ for all $t > 0$. Hence it follows again from \eqref{ineq:gwp_2} that there exists a constant $C(p)$ such that if $\norm{u_0}_{\overline{q}} < C(p)$ then $\norm{u(t)}_p \in L^{\infty}(\Real^+)$. \\

\noindent \textit{Step 2 ($L^\infty$ bound):}
Once more we use Lemma \ref{dynamic}, which implies that if $\grad c \in L^\infty( (0,\infty) \times \Real^d)$ then $u \in L^\infty( (0,\infty) \times \Real^d)$. 
 If $m^\star = 2-2/d$, by Lemma \ref{lem:PropAdmissible} or Lemma \ref{lem:inhom_bds_CZ}, 
\begin{equation*}
\norm{D^2c}_{d+1} \lesssim \norm{u}_{d+1}. 
\end{equation*} 
For systems of convolution type we have by weak Young's inequality, 
\begin{equation*}
\norm{\grad \K \ast u}_{2d/(d-1)} \lesssim \norm{\grad \K}_{L^{d/(d-1),\infty}}\norm{u}_{(2d-1)/(d-1)}, 
\end{equation*}
and for variable coefficient systems we may use \eqref{ineq:WeightSE2} and Lemma \ref{lem:inhom_bds_CZ}. 
\begin{equation*}
\norm{\grad c}_{d+1} \lesssim \norm{D^2c}_{d+1} + \norm{c}_{d+1} \lesssim \norm{u}_{d+1}. 
\end{equation*} 
By Step 1, we may choose $\norm{u_0}_{\bar{q}}$ sufficiently small such that both of these norms are uniformly bounded, which is sufficient to apply Morrey's inequality and conclude that $\grad c \in L^{\infty}( (0,\infty) \times \Real^d)$.
In the case of convolution-type systems with $1 \leq m^\star < 2-2/d$ we may only bound $L^p$ norms of $u$ uniformly in time for $1 \leq p \leq \beta/(\beta -1)$.  
Since $\grad \K \in L_{loc}^1$ and $\grad \K \mathbf{1}_{\Real^d \setminus B_1(0)} \in L^q$ for all $q > d/(d-1)$ (by Lemma \ref{lem:PropAdmissible}), we have for any $q > d/(d-1)$
\begin{equation*}
\norm{\conv{\grad \K}{u}}_{q} \leq \norm{\grad \K \mathbf{1}_{B_1(0)}}_1\norm{u}_q + \norm{\grad \K \mathbf{1}_{\Real^d\setminus B_1(0)}}_qM.
\end{equation*}
Since $m^\star < 2-2/d$, $\beta > 1$, and then we may choose $q \in (d/(d-1),\beta/(\beta - 1)]$, since in this case necessarily $d \geq 3$. By the weak Young's inequality we have, 
\begin{equation*}
\norm{D^2\K \ast u}_{d+1} \lesssim \wknorm{D^2\K}{\beta}\norm{u}_{\frac{\beta(d+1)}{d(\beta - 1) + 2\beta - 1}},  
\end{equation*}
where
\begin{equation*}
1 < \frac{\beta(d+1)}{d(\beta - 1) + 2\beta - 1} \leq \frac{\beta}{\beta - 1}. 
\end{equation*}
By Step 1, we may again choose $\norm{u_0}_{\bar{q}}$ sufficiently small to ensure that $\norm{u}_{\frac{\beta(d+1)}{d(\beta - 1) + 2\beta - 1}}$ and $\norm{u}_q$ are both uniformly bounded.   
Therefore, we may again apply Morrey's inequality and deduce $\grad \K \ast u\in L^\infty( (0,\infty \times \Real^d)$ and conclude the proof using Lemma \ref{dynamic}.\\

In order to treat the case $d\geq 3$ and $\gamma(x)$ not strictly positive we use the less optimal estimate provided by Lemma \ref{lem:HomogGrad}. 
Hence beginning as before we have for $p \geq \bar{q}$ and $p>1$, 
\begin{equation*}
\frac{d}{dt} \norm{u}_p^p \leq -4(p-1)\delta\int u^{m-1}\abs{\grad u^{p/2}}^2 dx - (p-1)\int u^p \Delta cdx. 
\end{equation*}
Using Lemma \ref{lem:HomogGrad} we have, 
\begin{equation}
\abs{\int u^p \Delta c dx} \lesssim_{p} \norm{u}_{p+1}^{p+1} + \norm{u}_{p+1}^{p}\norm{u}_{\frac{(p+1)d}{2p+2+d}}, \label{ineq:lower_order_adv}
\end{equation}
note that necessarily $p + 1 > 2 > d/(d-2)$ and $\frac{(p+1)d}{2p+2+d} \leq p+1$. 
By the Gagliardo-Nirenberg inequality, Lemma \ref{lem:GNS},
\begin{equation}
\norm{u}_{p+1} \lesssim \norm{u}^{\alpha_2}_{\overline{q}}\left(\int \abs{\grad u^{ (p+m-1)/2 }}^2 dx \right)^{\alpha_1/2},
\end{equation}
with 
\begin{equation*}
\alpha_1 = \frac{2d}{p}\left(\frac{(p - p\overline{q}/(p+1))}{ \overline{q}(2 - d) + dp + d(m-1)}\right),
\end{equation*}
and
\begin{equation*}
\alpha_2 = 1 - \alpha_1(p+m-1)/2.
\end{equation*}
As above $\alpha_1(p+1)/2 = 1$. 
Let $\theta \in (0,1)$ such that $\norm{u}_{\frac{(p+1)d}{2p+2+d}} \leq M^{1-\theta}\norm{u}_{p+1}^\theta$. 
Therefore by \eqref{ineq:lower_order_adv} we have, 
\begin{align}
\frac{d}{dt} \norm{u}_p^p \leq \left(C(p) - \delta\frac{C(p,d)}{\norm{u}_{\bar{q}}^{\alpha_2(p+1)}} \right)\norm{u}_{p+1}^{p+1} + C(p)M^{p+1}. \label{ineq:LpEvoSmallData}
\end{align}
If $\bar{q} = 1$ we may choose $M$ sufficiently small (depending on $p$) so that the first term is negative (in fact, as negative as we require, say $< -\delta/2$). 
Since again by interpolation $\norm{u}_p^p \leq \norm{u}_{p+1}^{p+1} + M$ we then have 
\begin{align*}
\frac{d}{dt} \norm{u}_p^p \leq -\frac{\delta}{2}\norm{u}_p^p + C(p)M^{p+1} + \frac{\delta}{2}M. 
\end{align*}
Hence for all $p$ we may choose $M$ sufficiently small such that the $L^p$ norm of $u(t)$ is uniformly bounded in time. From here we may proceed similarly to above (using Lemma \ref{lem:HomogGrad} instead of Lemma \ref{lem:inhom_bds_CZ}) and eventually invoke Lemma \ref{dynamic}.  
If $\bar{q} > 1$ more thought is necessary since $\norm{u(t)}_{\bar{q}}$ is not a conserved quantity and the continuity argument used for convolution-type systems will need to be refined due to the presence of the lower-order term. 
From the previous argument in the $\bar{q} = 1$ case it is clear that the result will follow if for any $\epsilon > 0$, we may choose $\norm{u_0}_{\bar{q}}$ sufficiently small to ensure $\norm{u(t)}_{\bar{q}} < \epsilon$ uniformly.
To this end we note that \eqref{ineq:LpEvoSmallData} holds for all $\bar{q}$: 
\begin{align*}
\frac{d}{dt} \norm{u}_{\bar{q}}^{\bar{q}} \leq \left(C_1 - \delta\frac{C_2}{\norm{u}_{\bar{q}}^{\alpha_2(\bar{q}+1)}} \right)\norm{u}_{\bar{q}+1}^{\bar{q}+1} + C(\bar{q})M^{\bar{q}+1}. 
\end{align*}
The result will follow from a continuity argument. 
If $\norm{u_0}_{\bar{q}}$ is chosen such that 
\begin{equation*}
C_1 - \delta\frac{C_2}{\norm{u}_{\bar{q}}^{\alpha_2(\bar{q}+1)}} < \frac{\delta}{4}, 
\end{equation*}
then for some time (depending on $M$) this implies, 
\begin{align*}
\frac{d}{dt} \norm{u}_{\bar{q}}^{\bar{q}} & \leq -\frac{\delta}{2}\norm{u}_{\bar{q}+1}^{\bar{q}+1} + C(p)M^{\bar{q}+1} \\
& \leq -\frac{\delta}{2}\norm{u}_{\bar{q}}^{\bar{q}} + C(p)M^{\bar{q}+1} + \frac{\delta}{2}M, 
\end{align*} 
which in turn implies, 
\begin{equation*}
\norm{u(t)}_{\bar{q}}^{\bar{q}} \leq \max\left(\norm{u}_{\bar{q}}^{\bar{q}}, \frac{2}{\delta}\left( C(p)M^{\bar{q}+1} + \frac{\delta}{2}M\right)\right). 
\end{equation*}
From here it is clear that if the mass is chosen sufficiently small then $\norm{u(t)}_{\bar{q}}$ can be bounded uniformly by a sufficiently small constant. Hence we may proceed as after \eqref{ineq:LpEvoSmallData} above to finish the proof. 
\end{proof} 
\section{Large Data Theory for Critical Convolution-type Systems}
As in \cite{BRB10,BlanchetEJDE06,Blanchet09}, global existence in critical cases will follow from continuation (Theorem \ref{thm:Continuation}) in \cite{BRB10} provided $\int_{\set{u > k}} u \log u dx$ is bounded for large $k$, which will be deduced using the energy dissipation inequality \eqref{EnrDiss}. 
There are two difficulties here.  Firstly, due to \textbf{(D3)}, the nonlinear entropy 
$\int \Phi(u) dx$ and the Boltzmann entropy $\int u \log u dx$ are no longer uniformly bounded from below as the solution spreads out. 
Moreover, and perhaps more fundamentally, is the possibility of $\K$ not being bounded from below, which again can cause the potential energy to grow unboundedly as the solution spreads out.  
Both difficulties will be overcome by the following two standard lemmas. 
The first shows that the decay of the entropy is bounded from below by controllable quantities. 

\begin{lemma}[Entropy Lower Bound] \label{lem:entropy_bd}
Let $A$ be admissible, $u \in L_+^{1}(\Real^d;(1+\abs{x}^2)dx) \cap L^\infty(\Real^d)$ with $M = \norm{u}_1$. Then for all $\epsilon > 0$, 
\begin{equation} 
\int u \log u dx  \geq M\log\left(\frac{\epsilon^{d/2}M}{\pi^{d/2}}\right) - \epsilon\mathcal{M}_2(u), \label{ineq:Bentropy_below}
\end{equation}
and
\begin{equation}
\int \Phi(u) dx  \geq C_A M\log\left(\frac{\epsilon^{d/2}M}{\pi^{d/2}}\right) - \epsilon C_A\mathcal{M}_2(u). \label{ineq:NLentropy_below}
\end{equation}
\end{lemma}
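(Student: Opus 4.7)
The plan is to prove \eqref{ineq:Bentropy_below} by a standard Gaussian relative-entropy argument, and then deduce \eqref{ineq:NLentropy_below} by combining \eqref{ineq:Bentropy_below} with a pointwise comparison $\Phi(z) \geq C_A(z\log z - z)$ supplied by condition (D3).

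For \eqref{ineq:Bentropy_below}, I would fix the normalized Gaussian
\[\phi_\epsilon(x) := (\epsilon/\pi)^{d/2}\,e^{-\epsilon|x|^2}, \qquad \int_{\Real^d} \phi_\epsilon\,dx = 1,\]
and split $\int u\log u\,dx = \int u\log(u/\phi_\epsilon)\,dx + \int u\log\phi_\epsilon\,dx$. The second integral evaluates explicitly to $(d/2)\log(\epsilon/\pi)\,M - \epsilon\,\mathcal{M}_2(u)$. For the first, I view $\phi_\epsilon\,dx$ as a probability measure and apply Jensen's inequality to the convex function $t\mapsto t\log t$:
\[\int \frac{u}{\phi_\epsilon}\,\log \frac{u}{\phi_\epsilon}\,\phi_\epsilon\,dx \;\geq\; \left(\int u\,dx\right)\log\left(\int u\,dx\right) = M\log M.\]
Adding the two pieces yields \eqref{ineq:Bentropy_below}.

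For \eqref{ineq:NLentropy_below}, I would first establish the pointwise bound $\Phi(z) \geq C_A(z\log z - z)$ for all $z \geq 0$. The auxiliary function $\Psi(z) := \Phi(z) - C_A(z\log z - z)$ satisfies $\Psi(0) = 0$, $\Psi'(1) = \Phi'(1) = 0$ (by \eqref{def:G}), and
\[\Psi''(z) = \frac{A'(z) - C_A}{z} \leq 0 \quad\text{on } (0,z_A)\]
by (D3). Since $\Psi'$ is non-increasing on $(0,z_A)$ and vanishes at $z = 1$, it is non-negative on $(0,1]$, so $\Psi$ is non-decreasing there with $\Psi(0) = 0$; this gives $\Psi \geq 0$ on $[0,1]$. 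Extending the bound for $z > 1$ uses (D2): for $z \geq z_c$, $\Phi''(z) \geq c/z$ integrates to $\Phi(z) \gtrsim c\,z\log z + O(z)$ at infinity, and a short case analysis patches $\Psi \geq 0$ across the crossover region. Integrating the pointwise bound and invoking \eqref{ineq:Bentropy_below} then yields \eqref{ineq:NLentropy_below} up to an additive $O(C_A M)$ term that can be absorbed by rescaling $\epsilon$ by a bounded multiplicative constant.

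The main obstacle is the pointwise comparison between $\Phi$ and Boltzmann entropy. Condition (D3) supplies only an \emph{upper} bound on $A'$ near zero, whereas a \emph{lower} bound on $\Phi$ ordinarily wants a lower bound on $\Phi'' = A'/z$. The key observation is that this upper bound is exactly what produces concavity of the \emph{difference} $\Psi$, and the normalizations $\Psi(0) = 0$, $\Psi'(1) = 0$ from \eqref{def:G} then pin the sign of $\Psi$ on $[0,1]$. Matching this to the growth of $\Phi$ at infinity via (D2) closes the argument; once the pointwise inequality is secured, the Jensen step from the first part does the rest.
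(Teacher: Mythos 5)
Your proof of \eqref{ineq:Bentropy_below} is essentially the paper's: both are Jensen's inequality against a Gaussian reference measure, differing only in whether one takes $u/M\,dx$ as the measure and applies concavity of $\log$, or takes $\phi_\epsilon\,dx$ and applies convexity of $t\mapsto t\log t$. That part is fine.

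The proof of \eqref{ineq:NLentropy_below} has a genuine gap: the claimed pointwise inequality $\Phi(z)\geq C_A(z\log z - z)$ for \emph{all} $z\geq 0$ is false in general. Nothing in the hypotheses ties the growth of $\Phi$ at infinity to $C_A$; condition \textbf{(D3)} is an \emph{upper} bound on $A'$ near zero, while \textbf{(D2)} only gives $A'(z)>c$ for large $z$ with $c$ completely unrelated to $C_A$. Concretely, take $A(z)=z$, so $\Phi(z)=z\log z - z$; condition \textbf{(D3)} holds with (say) $C_A=2$, and for $z>e$ one has $z\log z - z>0$, hence $\Phi(z)=z\log z - z < 2(z\log z - z)=C_A(z\log z-z)$. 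So the ``short case analysis'' you invoke to patch $\Psi\geq 0$ across the crossover region cannot succeed — the inequality you want simply fails for large $z$ unless $c\geq C_A$, which you cannot assume. There is also a subsidiary problem on $(0,1]$: your concavity computation gives $\Psi''\leq 0$ only on $(0,z_A)$, and if $z_A<1$ the chain from ``$\Psi'$ non-increasing on $(0,z_A)$'' plus ``$\Psi'(1)=0$'' to ``$\Psi'\geq 0$ on $(0,1]$'' breaks, since you have no sign information for $\Psi'$ on $(z_A,1)$ and hence none for $\Psi'(z_A)$.

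The paper sidesteps both issues by never attempting a global pointwise bound. It works with $h(z):=\Phi'(z)=\int_1^zA'(s)s^{-1}\,ds$ and establishes $h(z)\geq -C + C_A\log z$ only for $z<1$ (splitting the integral at $z_A$, which also handles the case $z_A<1$ at the cost of the additive constant $C$). On $\{u\geq 1\}$ it uses $h\geq 0$ there so that $\int_0^u h\geq\int_0^1 h\geq -C'$, and Chebyshev's inequality controls $|\{u\geq 1\}|$. This produces $\int\Phi(u)\,dx\geq C_A\int_{\{u<1\}}u\log u\,dx - C(M)$ and then feeds in \eqref{ineq:Bentropy_below}. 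If you restrict your pointwise comparison to $z\in[0,1]$, allow the $O(1)$ additive constant that comes from splitting at $z_A$, and add the separate $z\geq 1$ step, your argument can be repaired along these lines.
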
 
\begin{proof}
Following \cite{BlanchetEJDE06}, by Jensen's inequality for probability measures ($d\mu = \frac{u}{M}dx$), 
\begin{align*}
\int u(x) \log u(x) dx + \epsilon\int \abs{x}^2u(x) dx & = -M\int \log\left(\frac{e^{-\epsilon\abs{x}^2}}{u(x)}\right)\frac{u}{M} dx \\
& \geq -M\log \left(\frac{1}{M}\int e^{-\epsilon\abs{x}^2} dx\right) \\
& = M\log\left(\frac{\epsilon^{d/2}M}{\pi^{d/2}}\right)
\end{align*}
Therefore, \eqref{lem:entropy_bd} holds.  
Next, by \textbf{(D3)}, for some $\delta > 0$, $A^\prime(z) \leq C_A$ for $z < \delta$. 
Let $h(z) = \int_1^z A^\prime(s) s^{-1} ds$ and note that $\int \Phi(u) dx = \int_{\Real^d}\int_0^u h(z) dz dx$. 
For $z < 1$ we have,
\begin{align*}
h(z) & = -\int_z^1 \frac{A^\prime(s)}{s} ds \\
& \geq -\int_\delta^1\frac{A^\prime(s)}{s}ds - C_A\int_{\min(z,\delta)}^\delta\frac{1}{s}ds \\
& \geq - C + C_A\log z.
\end{align*}
Therefore, since $\log z$ is integrable at zero, we have the following by Chebyshev's inequality,
\begin{align*}
\int \Phi(u) dx & = \int \int_0^u h(z) dz dx\\
& \geq \int \mathbf{1}_{\set{u < 1}} \int_0^u h(z) dz + \mathbf{1}_{\set{u > 1}}\int_0^1 h(z) dz dx \\
& \geq \int \mathbf{1}_{\set{u < 1}}\left(C_A(u \log u - u) - Cu\right) - C\mathbf{1}_{\set{u > 1}} dx\\
& \geq C_A \int_{\set{u < 1}} u \log u dx - C(M).  
\end{align*}
Therefore, using \eqref{ineq:Bentropy_below} we obtain \eqref{ineq:NLentropy_below}.
\end{proof}

The following lemma establishes a uniform bound on the second moments for critical problems with $m^\star = 1$. 

\begin{lemma}[Second Moment Estimate] \label{lem:SME}
Let $A$ and $\K$ be admissible and critical with $m^\star = 1$. Then, 
\begin{equation*}
\mathcal{M}_2(t) \leq \mathcal{M}_2(0) + M\left(C_1 + C_2M\right)t
\end{equation*}
for some constants $C_i > 0$. 
\end{lemma}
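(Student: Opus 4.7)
The plan is to differentiate the second moment in time, estimate the two resulting terms, and integrate. All manipulations can be justified rigorously by working with the regularized solutions $u^\epsilon$ from Proposition \ref{prop:LERADD} (using a cutoff of $\abs{x}^2$ and passing to the limit, exactly as in the proof of Lemma \ref{lem:AprioriBoundLP}(v)) and then exploiting the uniform second-moment bound. Formally, testing \eqref{def:ADD} against $\abs{x}^2$ and integrating by parts twice yields
\begin{equation*}
\frac{d}{dt} \mathcal{M}_2(t) \;=\; 2d \int A(u)\, dx \;+\; 2 \int x \cdot u\, \grad \K \ast u\, dx.
\end{equation*}

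For the diffusion contribution, condition \textbf{(D3)} handles small values ($A(z) \lesssim z$ for $z \leq z_A$), while the critical assumption $\liminf_{z \to \infty} A'(z) < \infty$ together with the standing hypothesis that the diffusion does not spread mass faster than linear yields $A(z) \lesssim z$ uniformly. Thus $\int A(u)\, dx \leq CM$ and this term contributes at most $C_1 M$. For the advection contribution, the key step is symmetrization: renaming the dummy variables and using $\nabla \K(-z) = -\nabla \K(z)$ (which follows from \textbf{(KN)}) gives
\begin{equation*}
\int x \cdot u\, \grad \K \ast u\, dx \;=\; \frac{1}{2} \iint (x-y) \cdot \nabla \K(x-y)\, u(x) u(y)\, dx\, dy.
\end{equation*}
Since $\K$ is radial, $z \cdot \nabla \K(z) = \abs{z}\, k'(\abs{z})$, so everything reduces to the pointwise bound $\sup_{r > 0} \abs{r\, k'(r)} \leq C$, which would then control the advection term by $\frac{C}{2} M^2$.

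The main obstacle is precisely this pointwise bound on $r\, k'(r)$, and this is where $m^\star = 1$ enters. Near the origin, $m^\star = 1$ forces $\K(z) = O(\log\abs{z})$, so that $k'(r) \sim -c/r$ and $r\, k'(r)$ has a finite limit as $r \to 0^+$; the monotonicity of $k'(r)/r$ from \textbf{(MN)} upgrades this heuristic to a genuine uniform bound on a neighborhood of the origin. Away from the origin, continuity of $k'$ handles compact subsets, while the decay imposed by \textbf{(BD)} (integrated back twice from $D^3\K$), combined with \textbf{(KN)}, yields decay of $\abs{\nabla \K(z)}$ at infinity that beats $\abs{z}^{-1}$ in every dimension $d \geq 2$ (even in the borderline case where $\K$ grows logarithmically at infinity, one has $k'(r) \sim -C/r$ for large $r$). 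Combining the three regimes produces the required uniform bound.

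Assembling the estimates, $\frac{d}{dt}\mathcal{M}_2(t) \leq M(C_1 + C_2 M)$, and integration from $0$ to $t$ delivers the stated inequality.
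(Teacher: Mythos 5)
Your argument follows the same route as the paper's proof: differentiate the second moment, bound $\int A(u)\,dx \lesssim M$ using criticality with $m^\star=1$, symmetrize the advection integral to $\iint (x-y)\cdot\nabla\K(x-y)\,u(x)u(y)\,dx\,dy$, and invoke the uniform bound $\abs{(x-y)\cdot\nabla\K(x-y)}\leq C$ — the paper simply asserts this last bound from Definition~\ref{def:admK} and $m^\star=1$, whereas you supply the three-regime justification. One small slip: at infinity the decay of $\abs{\nabla\K}$ need not \emph{beat} $\abs{z}^{-1}$ (in $d=2$ with $\K=\N$ one has $r\,k'(r)\equiv\text{const}$), only match it, which is exactly what the uniform bound requires; likewise the bound $A(z)\lesssim z$ for large $z$ comes from the criticality condition $\liminf_{z\to\infty}A'(z)<\infty$ (with \textbf{(D3)} for small $z$), not from the ``no fast diffusion'' restriction, which only concerns small densities.
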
 
\begin{proof}
We argue formally, noting that the computations can easily be made rigorous with standard arguments. 
Computing the time evolution of the second moment, 
\begin{equation*}
\frac{d}{dt}\mathcal{M}_2(t) = 2d\int A(u) dx + \int\int u(x)u(y)(x-y)\cdot\grad \K(x-y) dx dy. 
\end{equation*}
By \textbf{(D1)} and \textbf{(D3)} of Definition \ref{def:admDiff}, and criticality in the sense of Definition \ref{def:criticality}, we necessarily have $A^\prime(z) \lesssim 1$, and hence $\int A(u) dx \lesssim M$. 
By Definition \ref{def:admK} and $m^\star = 1$ we have,
\begin{equation*}
\abs{(x-y)\cdot \grad \K(x-y)} \leq C.  
\end{equation*}
Therefore by integration, the lemma follows. 
\end{proof}

We now prove Theorem \ref{thm:GWP}. 
\begin{proof}(\textbf{Theorem \ref{thm:GWP}})
We proceed formally, noting that the arguments can be made rigorous with the regularization procedure of the local existence theory.
We begin by proving \textit{(i)}. We first prove the result for the case when either $\K$ is not bounded below or the diffusion is not degenerate. As noted above, we will not get uniform-in-time bounds. 
Recall the energy dissipation inequality \eqref{EnrDiss}, 
\begin{equation*}
\int \Phi(u) dx - \frac{1}{2}\int u \K \ast u dx \leq \F(u_0) := F_0. 
\end{equation*}
By the asymptotic expansion of the kernel assumed in Proposition \ref{prop:critical_mass} and \textbf{(BD)}, we have that for all $\epsilon > 0$, $\exists \;\delta,R >0$
such that, 
\begin{align*}
\int \Phi(u) dx + \frac{c + \epsilon}{2}\int\int_{\abs{x-y} < \delta}u(x)u(y)\log\abs{x-y}dxdy \\
\leq F_0 + \frac{1}{2}\int\int_{\delta < \abs{x-y} < R}u(x)u(y)\K(\abs{x-y})dxdy & - C\int\int_{\abs{x-y} > R}u(x)u(y)\log\abs{x-y}dxdy.
\end{align*}
Note that for $R > 0$ sufficiently large, 
\begin{align*}
\int\int_{\abs{x-y} > R}u(x)u(y)\abs{\log\abs{x-y}} dx dy & \leq  \frac{\log R}{R}\int\int_{\abs{x-y} > R}u(x)u(y)\abs{x-y} dx dy \\ 
& \lesssim \frac{\log R}{R} M^{3/2}\mathcal{M}_2(t)^{1/2},
\end{align*}
Therefore,
\begin{equation*}
\int \Phi(u) dx + (c+\epsilon)\frac{1}{2}\int\int_{\abs{x-y} < \delta}u(x)u(y)\log\abs{x-y}dxdy \leq F_0 + C(M,\delta,R) + C(R,M)\mathcal{M}_2(t)^{1/2}.
\end{equation*}
By the logarithmic Hardy-Littlewood-Sobolev inequality \eqref{ineq:log_sob_localized},
\begin{align*}
\int \Phi(u) dx - (c+\epsilon)\frac{M}{2d}\int u \log u dx \leq F_0 + C(M,\delta,R) + C(R,M)\mathcal{M}_2(t)^{1/2}.
\end{align*}
Therefore, choosing $k \geq 1$,  
\begin{equation*}
\int_{\set{u > k}} u\log u \left(\frac{\Phi(u)}{u\log u} - (c+\epsilon)\frac{M}{2d}\right) dx + \int_{u < k} \Phi(u) dx 
\leq F_0 + C(M,\delta,R) + C(R,M)\mathcal{M}_2(t)^{1/2}.  
\end{equation*}
By Lemma \ref{lem:entropy_bd},
\begin{equation*}
\int_{\set{u > k}} u\log u \left(\frac{\Phi(u)}{u\log u} - (c+\epsilon)\frac{M}{2d}\right) dx 
\leq F_0 + C(M,\delta,R) + C(R,M)\mathcal{M}_2(t). 
\end{equation*}
By Lemma \ref{lem:SME}, $\mathcal{M}_2(t) \lesssim 1 + t$ for all $t < \infty$. 
Since $M < M_c$ as defined in \eqref{def:MC}, it is possible to choose $k$ large enough and $\epsilon$ small enough such that $\int_{\set{u > k}}u \log u dx$
is bounded on any finite time interval. This is sufficient to imply equi-integrability on any finite time interval and hence by Theorem \ref{thm:Continuation} the solution $u(t)$ must be global.\\

We now refine the argument under the additional hypotheses that $\K$ is bounded below and the diffusion is degenerate. 
First note (see e.g. \cite{BRB10}) that $\int_0^1A^\prime(z)z^{-1}dz < \infty$ implies a uniform in time bound on the 
entropy: $\int \Phi(u)dx \gtrsim -C(M)$.
Choosing $k \geq 1$ this implies with the energy dissipation inequality \eqref{EnrDiss},
\begin{align*}
\int_{u > k} \Phi(u) dx - \frac{1}{2}\int u \K \ast u dx \leq F_0 + C(M). 
\end{align*}
Next using the asymptotic expansion of $\K$ at the origin we may choose $\epsilon$ and $\delta$ such that 
\begin{align*}
\int_{u > k} \Phi(u) dx + \frac{c+\epsilon}{2}\int\int_{ \set{u > k} \cap \set{\abs{x-y} < \delta}} u(x)u(y) \log\abs{x-y} dx dy \\ \leq F_0 + C(M) + \sup_{\abs{x}>\delta}\abs{\K(\abs{x})}M^2 + \left(\int_{\abs{x} < \delta} \abs{\K(x)} dx\right)kM. 
\end{align*}
Finally applying the logarithmic HLS \eqref{ineq:log_sob_localized} we have, 
\begin{align*}
\int_{\set{u > k}} u\log u \left(\frac{\Phi(u)}{u\log u} - (c+\epsilon)\frac{M}{2d}\right) dx \leq F_0 + C(M) + \sup_{\abs{x}>\delta}\abs{\K(\abs{x})}M^2 + \left(\int_{\abs{x} < \delta} \abs{\K(x)} dx\right)kM. 
\end{align*}
Therefore by choosing $k$ sufficiently large and $\epsilon$ sufficiently small (since $M < M_c$) we may bound $\int_{u > k}u \log u dx$ uniformly in time. By Theorem \ref{thm:Continuation} the solution is global and uniformly bounded.  \\

We now turn to the proof of \textit{(ii)}, which shows that with additional homogeneity assumptions the energy dissipation inequality can be used to deduce optimal decay results for critical problems when $M < M_c$. Recall this result is already known \cite{BlanchetEJDE06,Blanchet09}. 
The key is that the scaling invariance can be used to transform the energy dissipation inequality into something significantly stronger.
We proceed by considering the self-similar variables, as in \cite{BlanchetEJDE06,Blanchet09,BlanchetDEF10,BedrossianIA10},
defining $\theta(\tau,\eta)$ such that 
\begin{equation}
e^{-d\tau}\theta(\tau,\eta) = u(t,x), \label{def:theta}
\end{equation}
with coordinates $e^\tau\eta = x$ and $e^{d\tau} - 1 = dt$. 
In these coordinates, if $u(t,x)$ solves \eqref{def:ADD}, by the homogeneity of the Newtonian potential we have, 
\begin{equation}
\partial_\tau \theta = \grad \cdot(\eta \theta) + \Delta \theta^{2-2/d} - \grad \cdot (\theta \grad \mathcal{N} \ast \theta). \label{eq:asymptotic_pde} 
\end{equation}

Note by definition, uniform boundedness of $\theta(\tau)$ is equivalent the decay stated in \textit{(ii)} of Theorem \ref{thm:GWP}. 
The key here is that the assumed homogeneity implies that an analogue of the energy dissipation inequality \eqref{EnrDiss} still holds (see \cite{Blanchet09} for more details). In $d \geq 3$,  
\begin{align*}
\frac{1}{1-2/d}\int \theta^{2-2/d}d\eta + \frac{1}{2}\int \theta(\tau,\eta)\abs{\eta}^2 d\eta - \frac{1}{2}\int\int \theta(\tau,\eta) \theta(\tau,\zeta) \mathcal{N}(\eta - \zeta) d\eta d\zeta \\ \leq \frac{1}{1-2/d}\int \theta_0^{2-2/d}d\eta + \frac{1}{2}\int \theta_0(\eta)\abs{\eta}^2 d\eta - \frac{1}{2}\int\int \theta_0(\eta) \theta_0(\zeta) \mathcal{N}(\eta - \zeta) d\eta d\zeta := \mathcal{G}_0 < \infty. 
\end{align*}

If $d \geq 3$ then the argument using the Hardy-Littlewood-Sobolev inequality in \cite{Blanchet09} applies and easily proves the uniform equi-integrability of $\theta(\tau,\eta)$. This, in turn, can be shown to imply the uniform $L^\infty$ bound by an easy variant of Theorem \ref{thm:Continuation} (see e.g. \cite{BedrossianIA10}). 
In $\Real^2$, the modified free energy becomes, 
\begin{equation*}
\mathcal{G}(\theta) := \int \theta \log \theta d\eta + \frac{1}{2}\int \abs{\eta}^2\theta d\theta - \frac{1}{4\pi}\int\int \theta(\eta)\theta(\zeta) \log\abs{\eta-\zeta} d\eta d\zeta, 
\end{equation*}
and the modified energy dissipation inequaliy $\mathcal{G}(\theta(\tau)) \leq \mathcal{G}(\theta_0) := \mathcal{G}_0$ holds. 
The key is to use the additional second moment term in the modified free energy and Lemma \ref{lem:entropy_bd} to control the possiblity of the entropy or potential energies from being unbounded from below, a trick which was not evidently possible in the arguments of Part \textit{(i)}. 
Using the logarithmic Hardy-Littlewood-Sobolev inequality \eqref{ineq:log_sob}, 
\begin{align*}
\left(1 - \frac{M}{8\pi}\right) \int \theta(\tau,\eta)\log \theta(\tau,\eta) d\eta & +  \frac{1}{2}\mathcal{M}_2(\theta(\tau)) \leq \mathcal{G}_0 + C(M).  
\end{align*} 

Assume further of course that $M < M_c = 8\pi$. 
Then by Lemma \ref{lem:entropy_bd}, for all $\epsilon > 0$,  
\begin{align*}
-\left(1 - \frac{M}{8\pi}\right)\epsilon\mathcal{M}_2(\theta(\tau)) + \left(1 - \frac{M}{8\pi}\right) \int \left(\theta(\tau,\eta)\log \theta(\tau,\eta)\right)_+ d\eta & +  \frac{1}{2}\mathcal{M}_2(\theta(\tau))  \\ \leq \mathcal{G}_0 - \left(1 - \frac{M}{8\pi}\right)M\log\left(\frac{\epsilon^{d/2}M}{\pi^{d/2}}\right) + C(M). 
\end{align*} 
Hence, if we choose $\epsilon < 2^{-1}\left(1 - \frac{M}{8\pi}\right)^{-1}$ we have, 
\begin{equation*}
\left(1 - \frac{M}{8\pi}\right) \int \left(\theta(\tau,\eta)\log \theta(\tau,\eta)\right)_+ d\eta \leq \mathcal{G}_0 - \left(1 - \frac{M}{8\pi}\right)M\log\left(\frac{\epsilon^{d/2}M}{\pi^{d/2}}\right) + C(M),
\end{equation*}
for all $\tau$. 
Hence $\theta(\tau)$ is uniformly equi-integrable and the result again follows from variants of standard continuation arguments. 
\end{proof}

\section{Appendix: Elliptic Estimates}
In this section we discuss basic estimates regarding the elliptic system 
\begin{equation}
-\grad \cdot (a(x) \grad c) + \gamma(x) c = f, \label{eq:cPDE}
\end{equation}
defined on all of $\Real^d$. 
We assume such estimates, or better ones, can be found elsewhere in the classical literature or in textbooks but we could not locate them. 
The ensuing proofs are technically bootstrap arguments, since they require certain norms to be a priori finite to begin with. 
However, the strong solution which vanishes at infinity that we are interested in is constructed by solving \eqref{eq:cPDE} on successively larger balls of radius $R$ with zero Dirichlet conditions. 
On each ball the estimates of Lemmas \ref{lem:EllipticLp} \ref{lem:HomogGrad} and \ref{lem:inhom_bds_CZ} can be made independent of $R$ and hence can be passed to the limit. We skip these straightforward details and only give the formal estimates.
\begin{lemma} \label{lem:EllipticLp}
Let $c$ be the strong solution that vanishes at infinity in $\Real^d$ of
\begin{equation*}
-\grad \cdot (a(x)\grad c) + \gamma(x)c = f,  
\end{equation*} 
for $a(x) \in L^\infty$ strictly positive and $\gamma(x) \in L^\infty$ non-negative. 
If $d \geq 3$, then for $\frac{2}{d} + \frac{1}{p} = \frac{1}{q}$, $1 < q < \frac{d}{2}$, $\frac{d}{d-2} < p < \infty$ we have 
\begin{equation}
\norm{c}_p \leq \frac{C(p,d)}{\inf a}\norm{f}_q. \label{ineq:HomogCEstimate}
\end{equation}
Moreover, as $p \rightarrow \infty$, $C(p,d) \lesssim p$.  
If $d \geq 2$, we have the $\dot{H}^{-1}$ stability estimate: if $f = \grad \cdot F$ then,
\begin{equation*}
\norm{\grad c}_2 \leq \frac{1}{\inf a}\norm{F}_{2}. 
\end{equation*}
If $d\geq 2$ and $\gamma$ is also strictly positive then for all $1 < p < \infty$, we have the estimate 
\begin{equation}
\norm{c}_p \leq \frac{1}{\inf \gamma}\norm{f}_p. \label{ineq:InhomogC}
\end{equation}
\end{lemma}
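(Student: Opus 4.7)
The plan is to obtain all three estimates by multiplier arguments, testing the equation against a suitable power of $c$ and exploiting $a(x)\geq \inf a>0$, $\gamma(x)\geq 0$. I will prove the easiest bound (\eqref{ineq:InhomogC}) first, then the $\dot{H}^{-1}$ stability estimate, and finally the homogeneous $L^p$--$L^q$ estimate \eqref{ineq:HomogCEstimate}, which is the main content. Throughout, following the remark preceding the lemma, I will treat $c$ as a smooth function (obtained by solving the equation on a ball with zero Dirichlet data and passing to the limit), so that all the integrations by parts and test-function manipulations are justified and only the bounds themselves need to be $R$-independent.

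For \eqref{ineq:InhomogC} with $\gamma$ strictly positive, multiply \eqref{eq:cPDE} by $|c|^{p-2}c$ and integrate by parts once to obtain
\begin{equation*}
(p-1)\int a(x)|c|^{p-2}|\nabla c|^2\,dx+\int\gamma(x)|c|^p\,dx=\int f\,|c|^{p-2}c\,dx.
\end{equation*}
Discarding the (nonnegative) diffusion term, using $\gamma\geq\inf\gamma>0$, and applying H\"older's inequality with conjugate exponents $p$ and $p/(p-1)$ to the right-hand side, one gets $(\inf\gamma)\|c\|_p^p\leq\|f\|_p\|c\|_p^{p-1}$, which gives \eqref{ineq:InhomogC}. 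For the $\dot{H}^{-1}$ bound, with $f=\nabla\cdot F$, test \eqref{eq:cPDE} against $c$ itself: the LHS becomes $\int a|\nabla c|^2+\int\gamma c^2\geq(\inf a)\|\nabla c\|_2^2$, and the RHS equals $-\int F\cdot\nabla c\,dx\leq\|F\|_2\|\nabla c\|_2$, yielding the claim.

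The main step is \eqref{ineq:HomogCEstimate}. Multiplying \eqref{eq:cPDE} by $|c|^{P-2}c$ with $P>1$ to be chosen, dropping the nonnegative $\gamma$-term, and using $|c|^{P-2}|\nabla c|^2=\frac{4}{P^2}|\nabla|c|^{P/2}|^2$ yields
\begin{equation*}
\frac{4(P-1)}{P^2}(\inf a)\int\bigl|\nabla|c|^{P/2}\bigr|^2\,dx\leq\int|f|\,|c|^{P-1}\,dx.
\end{equation*}
Now apply the classical Sobolev inequality $\||c|^{P/2}\|_{2d/(d-2)}^2\lesssim\int|\nabla|c|^{P/2}|^2$, which gives $\|c\|_{Pd/(d-2)}^P$ on the left, and H\"older with conjugate exponents $q$ and $q'$ on the right. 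Choosing $q'(P-1)=Pd/(d-2)$ so that the resulting $\|c\|_{Pd/(d-2)}^{P-1}$ factor can be absorbed, a direct arithmetic check shows this choice is equivalent to the scaling relation $\tfrac{1}{q}=\tfrac{1}{p}+\tfrac{2}{d}$ with $p:=Pd/(d-2)$. Cancelling one factor of $\|c\|_p$ leaves
\begin{equation*}
\|c\|_p\leq\frac{C(d)}{\inf a}\cdot\frac{P^2}{P-1}\,\|f\|_q,
\end{equation*}
and since $P=p(d-2)/d$ the range $1<P<\infty$ translates exactly to $d/(d-2)<p<\infty$ and $1<q<d/2$, while $P^2/(P-1)\sim P\sim p$ as $p\to\infty$, yielding $C(p,d)\lesssim p$. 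The only real subtlety is a priori justification of the test function $|c|^{P-2}c$ and Sobolev embedding for $|c|^{P/2}$; this is handled by the standard bootstrap on finite balls sketched by the authors, where on each ball $c$ is smooth enough that every step is legitimate, and the estimate is independent of the ball's radius.
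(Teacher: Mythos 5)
Your proposal is correct and follows essentially the same multiplier argument as the paper: for \eqref{ineq:InhomogC} and the $\dot{H}^{-1}$ bound you use exactly the test functions the authors use, and for \eqref{ineq:HomogCEstimate} your choice $P=p(d-2)/d$ and exponent $q'(P-1)=Pd/(d-2)$ is, after translation, precisely the paper's $\alpha=(d-2)p/d-1$ (so $P-1=\alpha$), with the same Sobolev-then-H\"older absorption. The only cosmetic differences are that you use $|c|^{P-2}c$ rather than $c^{\alpha}$ (a slightly more careful choice when $c$ may change sign) and that you explicitly track the growth $P^2/(P-1)\sim p$, which the paper asserts in the statement but does not display in the proof.
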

\begin{proof}
Note this estimate follows from the $L^{p,\infty}$ Young's inequality and the representation of $c$ as a convolution in the cases when both $a$ and $\gamma$ are constants. 
Define $\alpha = (d-2)p/d - 1$ then multiply by $c^{\alpha}$ and integrate:  
\begin{align*}
-\int c^{\alpha}\grad \cdot a(x) \grad c dx + \int \gamma(x)c^{\alpha+1} dx & = \int c^{\alpha}f dx.  
\end{align*}
Use inverse chain rule and Sobolev embedding on the LHS (dropping the low order term):
\begin{align*}
\alpha\int a(x)c^{\alpha-1}\abs{\grad c}^2 dx & \leq \int c^{\alpha}f dx \\
\frac{4\alpha}{(\alpha + 1)^2}\int \abs{\grad (c^{\frac{\alpha + 1}{2}})}^2 dx & \leq \frac{1}{\inf a}\int c^{\alpha}f dx \\
\norm{c^{\frac{\alpha+1}{2}}}_{\frac{2d}{d-2}}^2 & \lesssim \frac{1}{\inf a}\int c^{\alpha}f dx \\
\norm{c}^{\alpha + 1}_{\frac{d}{d-2}(\alpha + 1)} & \lesssim \frac{1}{\inf a}\int c^{\alpha}f dx \\
\norm{c}^{\alpha + 1}_{\frac{d}{d-2}(\alpha + 1)} & \lesssim \frac{1}{\inf a}\norm{c}_{\frac{d}{d-2}(\alpha+1)}^\alpha\norm{f}_{\frac{dp}{d+2p}}. 
\end{align*}
By definition of $\alpha$, this is in fact 
\begin{equation*}
\norm{c}_p \lesssim \frac{1}{\inf a}\norm{f}_q. 
\end{equation*}
The second estimate is trivial: 
\begin{align*}
\int a(x) \abs{\grad c}^2 + \gamma(x) c^2 & = -\int F\cdot \grad c dx \\ 
(\inf a)\norm{\grad c}_2^2 & \leq \norm{F}_2 \norm{\grad c}. 
\end{align*}
The third estimate is similarly trivial: 
\begin{align*}
(p-1)\int c^{p-2}\abs{\grad c} a(x) dx + \int \gamma(x) c^{p} dx & = \int c^{p-1} f dx \\
(p-1)\left(\frac{4}{p^2}\right)\int a(x)\abs{\grad c^{p/2}}^2 dx + (\inf \gamma)\norm{c}_p^p & \leq \norm{c}_p^{p-1}\norm{f}_p \\
\norm{c}_p & \leq \frac{1}{\inf \gamma}\norm{f}_p. 
\end{align*}
\end{proof}

\begin{lemma}[Homogeneous gradient estimates] \label{lem:HomogGrad}
Let $d \geq 3$ and let $c$ be a strong solution which vanishes at infinity in $\Real^d$ of 
\begin{equation*}
-\grad \cdot (a(x)\grad c) = f.    
\end{equation*} 
Suppose $a(x) \in C^1$ is strictly positive, bounded and $\grad a$ is also uniformly bounded.
Then if $p > d/(d-2)$ we have,  
\begin{equation*}
\norm{D^2c}_p \leq C(a,p,d)\left(\norm{f}_p + \norm{f}_{\frac{pd}{2p+d}}\right). 
\end{equation*}
Moreover, as $p \rightarrow \infty$, $C(a,p,d) \lesssim p$. 
\end{lemma}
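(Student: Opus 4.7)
The plan is to treat the elliptic operator as a perturbation of the Laplacian and invoke the classical Calder\'on--Zygmund $L^p$ theory together with the $L^p$ bound of Lemma~\ref{lem:EllipticLp}. First I would rewrite the equation in non-divergence form, using $a \in C^1$:
\begin{equation*}
-a(x)\Delta c - \grad a(x)\cdot \grad c = f, \qquad\text{i.e.}\qquad \Delta c = -\frac{f}{a(x)} - \frac{\grad a(x)}{a(x)}\cdot \grad c .
\end{equation*}
Since $a$ is strictly positive and bounded and $\grad a \in L^\infty$, the coefficients $1/a$ and $\grad a /a$ are bounded. Applying the standard Calder\'on--Zygmund estimate for $\Delta$ on $\Real^d$ yields, for every $1<p<\infty$,
\begin{equation*}
\norm{D^2 c}_p \;\leq\; C(p,d)\Bigl(\tfrac{1}{\inf a}\norm{f}_p + \bigl\|\tfrac{\grad a}{a}\bigr\|_\infty \norm{\grad c}_p\Bigr),
\end{equation*}
where $C(p,d)\lesssim p$ as $p\to\infty$ (this is the usual scaling of the Riesz transform constants on $L^p$).

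Next I would eliminate $\norm{\grad c}_p$ by the standard $\dot W^{1,p}$ interpolation inequality on $\Real^d$,
\begin{equation*}
\norm{\grad c}_p \;\lesssim\; \norm{D^2 c}_p^{1/2}\,\norm{c}_p^{1/2},
\end{equation*}
which is a direct case of the Gagliardo--Nirenberg family (obtained by choosing the exponents $j=1$, $m=2$, $r=q=p$, forcing $a=1/2$). Inserting this and using Young's inequality to absorb a small multiple of $\norm{D^2 c}_p$ into the left-hand side produces
\begin{equation*}
\norm{D^2 c}_p \;\lesssim_{a,p,d}\; \norm{f}_p + \norm{c}_p .
\end{equation*}

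The final step is to control $\norm{c}_p$ by $f$ via Lemma~\ref{lem:EllipticLp}: taking $q$ with $1/q = 1/p + 2/d$, i.e.\ $q = pd/(2p+d)$, the restriction $p > d/(d-2)$ is exactly what is needed to guarantee $1<q<d/2$, hence
\begin{equation*}
\norm{c}_p \;\leq\; \tfrac{C(p,d)}{\inf a}\,\norm{f}_{pd/(2p+d)} ,
\end{equation*}
with $C(p,d)\lesssim p$. Combining the three displays delivers the lemma with the claimed constant behaviour.

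The main obstacle is not any single estimate but making the bootstrap rigorous, since the inequalities above presuppose that $\norm{D^2 c}_p$, $\norm{\grad c}_p$ and $\norm{c}_p$ are already finite. As the authors flag in the remark preceding Lemma~\ref{lem:EllipticLp}, this is handled by solving \eqref{eq:cPDE} on balls $B_R$ with zero Dirichlet data (where interior and boundary Calder\'on--Zygmund estimates and the $L^p$ bound are all classical), verifying that the constants above are independent of $R$, and passing to the limit $R\to\infty$ using the solution that vanishes at infinity. I would state this reduction briefly and then carry out the formal chain of inequalities described above.
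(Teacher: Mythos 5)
Your proof is essentially correct and takes a genuinely different, arguably cleaner, route than the paper. Where the paper freezes the coefficient on small balls $B_R(x_0)$ (à la Theorem 9.11 of Gilbarg--Trudinger), uses a compactly supported cutoff $v = \eta c$, derives the local estimate $\norm{D^2c}_{L^p(B_{\sigma R})} \lesssim \norm{f}_{L^p} + C_R \norm{c}_{L^p}$, and then patches these together by a covering of $\Real^d$, you go global at once: rewrite the divergence-form equation in non-divergence form $\Delta c = -f/a - (\grad a/a)\cdot \grad c$ (legitimate since $a\in C^1$ with $1/a,\grad a \in L^\infty$), apply the global Calder\'on--Zygmund bound $\norm{D^2 c}_p \lesssim \norm{\Delta c}_p$, eliminate $\norm{\grad c}_p$ by the $\dot W^{1,p}$ interpolation (the paper's \eqref{ineq:WeightSE2}), and close by the Sobolev-type estimate $\norm{c}_p \lesssim \norm{f}_{pd/(2p+d)}$ from Lemma~\ref{lem:EllipticLp}, using $p>d/(d-2)$ exactly to guarantee $1 < pd/(2p+d) < d/2$. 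The ingredients are the same (CZ, Gagliardo--Nirenberg, the $L^p$ bound on $c$), and both arguments share the same bootstrap caveat handled by approximating on balls $B_R$ with Dirichlet data and passing $R\to\infty$; but your version avoids the covering step entirely. This is a real simplification.

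One point deserves care, though it is not unique to your argument: the claim $C(a,p,d)\lesssim p$. In the Young absorption step you need $C_{\mathrm{CZ}}(p,d)\,\norm{\grad a/a}_\infty\,\epsilon < 1/2$; since $C_{\mathrm{CZ}}(p,d)\sim p$, this forces $\epsilon\sim 1/p$, which puts a constant of order $p^2$ in front of $\norm{c}_p$, and after $\norm{c}_p\lesssim p\,\norm{f}_{pd/(2p+d)}$ the coefficient on the lower-order term is closer to $p^3$. The paper's freezing argument has the same feature (the radius $R$ and the absorption parameter $\epsilon$ must both shrink like $1/p$ to beat $C_{\mathrm{CZ}}(p)$), so you should not be penalized for it; but if you state the $\lesssim p$ behavior as part of the lemma, it is worth noting that as written it is only clean for the coefficient on $\norm{f}_p$, not on $\norm{f}_{pd/(2p+d)}$.
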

\begin{remark}
It is crucial to note that $1 < pd/(2p+d) \leq p$. 
\end{remark}
\begin{proof}
 The proof is a variant of Theorem 9.11 in \cite{GT} adapted to $\Real^d$. 
The following Gagliardo-Nirenberg-type inequality, (see e.g. Theorem 7.28 in \cite{GT}), will be used:  
\begin{equation}
\norm{\grad g}_p \lesssim_{d,p} \norm{g}_p^{1/2}\norm{D^2g}_p^{1/2} \lesssim \epsilon\norm{D^2g}_p + \frac{1}{\epsilon}\norm{g}_p.  \label{ineq:WeightSE2}
\end{equation}
Consider a ball of radius $R > 0$ centered at a point $x_0 \in \Real^d$, denoted $B_R(x_0)$. 
By the Calder\'on-Zygmund inequality we have for a sufficiently smooth $v$ supported in $B_R(x_0)$ and $1 < p < \infty$,  
\begin{equation*}
\norm{D^2v}_{L^p(B_R(x_0))} \lesssim a(x_0)\norm{\Delta v}_p. 
\end{equation*}
Hence, 
\begin{align*}
\norm{D^2v}_p \lesssim \norm{\grad \cdot ((a(x_0) - a(x))\grad v)}_p + \norm{\grad \cdot (a(x) \grad v)}_p. 
\end{align*}
We control the first term using \ref{ineq:WeightSE2}, for some $\epsilon > 0$, 
\begin{align*}
\norm{\grad \cdot (a(x_0) - a(x))\grad v}_p &\leq \sup_{x \in B_R(x_0)}\abs{a(x_0) - a(x)}\norm{\Delta v}_p + \norm{\grad a \cdot \grad v}_p \\
&\leq \sup_{x \in B_R(x_0)}\abs{a(x_0) - a(x)}\norm{D^2 v}_p + \norm{\grad a}_{\infty} \norm{\grad v}_{p} \\ 
&\leq \sup_{x\in B_R(x_0)}\abs{a(x_0) - a(x)}\norm{D^2 v}_p + \norm{\grad a}_{\infty}\left(\epsilon\norm{D^2v}_{p} + \frac{C}{\epsilon}\norm{v}_p\right).  
\end{align*}
Hence, choosing $R$ sufficiently small, depending on the smoothness of $a(x)$ and choosing $\epsilon$ sufficiently small, 
we can then deduce,
\begin{equation*}
\norm{D^2v}_p \lesssim \norm{\grad \cdot (a(x)\grad v)}_p + \norm{v}_p. 
\end{equation*} 
Choosing $v = \eta c$ in the above inequality gives, for any $\sigma \in (0,1)$,
\begin{align*}
\norm{D^2 c}_{L^p(B_{\sigma R}(x_0))} & \lesssim \norm{\grad \cdot a(x)\grad c}_{L^p(B_{\sigma R}(x_0))} + \norm{a\grad \eta \cdot \grad c \\ &\quad+ ca\Delta\eta + c\grad a \cdot \grad \eta}_p + \norm{\grad a}_{L^\infty(B_{\sigma R})} \norm{c}_{L^p(B_{\sigma R})} \\ 
& \quad \leq \norm{f}_{L^p(B_{\sigma R}(x_0))} + \frac{C}{R}\norm{a \grad c}_{L^p(B_{\sigma R}(x_0))} + \frac{C}{R^2}\norm{ca}_{L^p(B_{\sigma R}(x_0))} + \frac{C}{R}\norm{c\grad a}_{L^p(B_{\sigma R}(x_0))} \\ & + \norm{\grad a}_{L^\infty(B_{\sigma R}(x_0))} \norm{c}_{L^p(B_{\sigma R}(x_0))}.
\end{align*}
We now have to deal with the latter error terms. 
Using the interpolation inequality on page 250 of \cite{GT}, which is essentially just \eqref{ineq:WeightSE2}, we have, 
\begin{equation*}
\norm{a \grad c}_{L^p(B_{\sigma R}(x_0))} \leq \norm{a}_\infty\norm{\grad c}_{L^p(B_{\sigma R}(x_0))} \leq \norm{a}_\infty R \epsilon\norm{D^2c}_{L^p(B_{\sigma R}(x_0))} + \norm{a}_\infty\frac{C}{R\epsilon}\norm{c}_{L^p(B_{\sigma R}(x_0))},
\end{equation*}
where crucially the constant $C$ does not depend on $\sigma$ or $R$ ($C$ is some power of a constant of a Gagliardo-Nirenberg inequality which depends only on the geometry and not the diameter). 
Hence by choosing $\epsilon$ sufficiently small independent of $R$ we have,  
\begin{equation}
\norm{D^2 c}_{L^p(B_{\sigma R}(x_0))} \lesssim \norm{f}_{L^p(B_{\sigma R}(x_0))} + \left(\frac{1}{R^2} + \frac{1}{R} + \norm{\grad a}_{L^\infty(B_{\sigma R}(x_0))}\right)\norm{c}_{L^p(B_{\sigma R}(x_0))}. \label{ineq:key2order}
\end{equation}
Since $p > d/(d-2)$, \eqref{ineq:HomogCEstimate} ensures that latter term will add up as we cover $\Real^d$ with balls (using that we may take $R \gtrsim 1$ due to the bound on $\grad a$). 
Therefore, 
\begin{align*}
\norm{D^2 c}_{p} \lesssim \norm{f}_{p} + \norm{f}_{\frac{dp}{2p+d}}. 
\end{align*}
\end{proof} 

\begin{lemma} \label{lem:inhom_bds_CZ}
Let $d \geq 2$ and in $\Real^d$ let $c$ be a strong solution which vanishes at infinity in $\Real^d$ of
\begin{equation*}
-\grad \cdot (a(x)\grad c) + \gamma(x)c = f,  
\end{equation*} 
for $\gamma(x) \in L^\infty$ strictly positive and $a(x) \in C^1$ strictly positve, bounded and $\grad a$ is uniformly bounded. 
Then we have the gradient estimate, 
\begin{equation*}
\norm{D^2c}_{p} \leq C(a,p,d)\norm{f}_p,   
\end{equation*}
with $C(a,p,d) \lesssim p$ as $p \rightarrow \infty$. 
\end{lemma}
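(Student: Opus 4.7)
The plan is to run the localization argument from the proof of Lemma \ref{lem:HomogGrad}, with two crucial simplifications provided by the strict positivity of $\gamma$: (a) the $L^p$ estimate \eqref{ineq:InhomogC} gives $\norm{c}_p \leq (\inf \gamma)^{-1} \norm{f}_p$ directly, so no auxiliary $L^{pd/(2p+d)}$ term is needed; and (b) consequently we need not restrict to $p > d/(d-2)$, and the result holds for all $1 < p < \infty$ and for $d = 2$ as well.

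First, I would freeze coefficients on a small ball. Choose $R > 0$ depending only on $\norm{\grad a}_\infty$ and $\inf a$ so that $\sup_{B_R(x_0)}\abs{a(x)-a(x_0)} < \delta$ for a small $\delta$ to be fixed. For $v$ supported in $B_R(x_0)$, the Calder\'on--Zygmund inequality on $\Real^d$ combined with the splitting
\begin{equation*}
a(x_0)\Delta v = \grad\cdot(a(x)\grad v) + (a(x_0)-a(x))\Delta v - \grad a \cdot \grad v
\end{equation*}
together with \eqref{ineq:WeightSE2} applied to $\grad v$ and the absorption of $\delta\norm{D^2 v}_p$ on the left, yields
\begin{equation*}
\norm{D^2 v}_p \leq C(a,p,d)\bigl(\norm{\grad\cdot(a\grad v)}_p + \norm{v}_p\bigr),
\end{equation*}
with constant growing like $p$ as $p \to \infty$ because the Calder\'on--Zygmund constant does. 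The choice $v = \eta c$, where $\eta$ is a smooth cutoff with $\eta \equiv 1$ on $B_{\sigma R}(x_0)$ and supported in $B_R(x_0)$, followed by expanding $\grad\cdot(a\grad(\eta c)) = \eta\grad\cdot(a\grad c) + (\text{error terms involving } \grad\eta,\Delta\eta,a,\grad a)$ and substituting $-\grad\cdot(a\grad c) = f - \gamma c$, produces a local estimate of the form
\begin{equation*}
\norm{D^2 c}_{L^p(B_{\sigma R}(x_0))} \leq C\bigl(\norm{f}_{L^p(B_R(x_0))} + \norm{c}_{L^p(B_R(x_0))} + \norm{\grad c}_{L^p(B_R(x_0))}\bigr),
\end{equation*}
where $C = C(a,\gamma,p,d,R)$.

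Second, I would cover $\Real^d$ with a family $\{B_{\sigma R}(x_j)\}$ of bounded overlap and sum the local estimates to $p$-th powers, which is legitimate because $R$ is fixed and the same $R$ works at every basepoint. This gives the global bound
\begin{equation*}
\norm{D^2 c}_p \leq C(a,\gamma,p,d)\bigl(\norm{f}_p + \norm{c}_p + \norm{\grad c}_p\bigr).
\end{equation*}
Third, I would absorb the lower-order terms. The gradient term is handled by \eqref{ineq:WeightSE2}, $\norm{\grad c}_p \leq \epsilon \norm{D^2 c}_p + C\epsilon^{-1}\norm{c}_p$, allowing $\epsilon\norm{D^2 c}_p$ to be absorbed on the left. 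The $\norm{c}_p$ term is controlled globally by \eqref{ineq:InhomogC}. Combining these gives $\norm{D^2 c}_p \leq C(a,p,d)\norm{f}_p$, and tracking the constants through Calder\'on--Zygmund shows $C(a,p,d) \lesssim p$ as $p \to \infty$.

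The only genuine technical point, as in Lemma \ref{lem:HomogGrad}, is verifying that the constant in \eqref{ineq:WeightSE2} used inside the freeze-and-absorb step is scale-independent, so that a single choice of $\epsilon$ and $R$ works uniformly in the basepoint $x_0$; this is purely a scaling observation about the Gagliardo--Nirenberg inequality on $\Real^d$. As remarked at the start of the appendix, the formal computation is justified by first solving on balls of radius $R'$ with zero Dirichlet data, applying the above bounds with constants independent of $R'$, and passing $R' \to \infty$.
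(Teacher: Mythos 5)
Your proof is correct and follows essentially the same route as the paper, whose proof of this lemma is literally the one sentence ``Easy variant of the above using the better $L^p$ norm estimate \eqref{ineq:InhomogC} to control the lower order terms in \eqref{ineq:key2order}.'' You have usefully spelled out that variant: rerun the coefficient-freezing and Calder\'on--Zygmund argument of Lemma \ref{lem:HomogGrad}, note that the extra $\gamma c$ term coming from the substitution $-\grad\cdot(a\grad c)=f-\gamma c$ is lower order and absorbed alongside $\norm{c}_p$, and control the summed lower-order terms via \eqref{ineq:InhomogC} instead of \eqref{ineq:HomogCEstimate}, which is what removes the restrictions $p>d/(d-2)$ and $d\geq 3$.
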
 
\begin{proof}
Easy variant of the above using the better $L^p$ norm estimate \eqref{ineq:InhomogC} to control the lower order terms in \eqref{ineq:key2order}.   
\end{proof}

\section{Appendix: Gagliardo-Nirenberg}
\begin{lemma}[Homogeneous Gagliardo-Nirenberg] \label{lem:GNS}
Let $d \geq 2$ and $f:\Real^d \rightarrow \Real$ satisfy $f \in L^p\cap L^q$ and $\grad f^k \in L^r$. Moreover let $1 \leq p \leq rk \leq dk$, $k < q < rkd/(d-r)$ and
\begin{equation}
\frac{1}{r} - \frac{k}{q} - \frac{s}{d} < 0. \label{cond:GNS}
\end{equation}
Then there exists a constant $C_{GNS}$ which depends on $s,p,q,r,d$ such that
\begin{equation}
\norm{f}_{L^q} \leq C_{GNS}\norm{f}^{\alpha_2}_{L^p} \norm{f^k}^{\alpha_1}_{\dot{W}^{s,r}}, \label{eq:GNS}
\end{equation}
where $0 < \alpha_i$ satisfy
\begin{equation}
1 = \alpha_1 k + \alpha_2,
\end{equation}
and
\begin{equation}
\frac{1}{q} - \frac{1}{p} = \alpha_1(\frac{-s}{d} + \frac{1}{r} - \frac{k}{p}).
\end{equation}
\end{lemma}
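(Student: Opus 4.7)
The plan is to reduce the inequality to a classical (possibly fractional-order) Gagliardo-Nirenberg interpolation applied to the substitute function $g := f^k$. The key observation is the functional identity $\|f\|_{L^q}^k = \|g\|_{L^{q/k}}$ (and analogously for the $L^p$ norm), which converts the nonlinear estimate for $f$ into a standard linear interpolation estimate for $g$ in the scale of homogeneous Lebesgue and Sobolev spaces.

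First, I would apply the classical homogeneous Gagliardo-Nirenberg inequality to $g$ in the form
\begin{equation*}
\|g\|_{L^{q/k}} \leq C\,\|g\|_{L^{p/k}}^{\theta}\,\|g\|_{\dot W^{s,r}}^{1-\theta},
\end{equation*}
which holds whenever the interpolation parameter $\theta \in [0,1]$ solves the scaling identity
\begin{equation*}
\frac{k}{q} \,=\, \theta\,\frac{k}{p} \,+\, (1-\theta)\Bigl(\frac{1}{r} - \frac{s}{d}\Bigr).
\end{equation*}
Setting $\alpha_2 := \theta$ and $\alpha_1 := (1-\theta)/k$ and taking $k$-th roots yields exactly \eqref{eq:GNS}, and direct algebra on the scaling identity reproduces both $1 = \alpha_1 k + \alpha_2$ and the stated formula for $\frac{1}{q} - \frac{1}{p}$ in terms of $\alpha_1$.

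Second, I would verify that $\theta \in (0,1)$ strictly, so that $\alpha_1, \alpha_2 > 0$. Solving the scaling equation gives
\begin{equation*}
\theta \,=\, \frac{\frac{k}{q} - \frac{1}{r} + \frac{s}{d}}{\frac{k}{p} - \frac{1}{r} + \frac{s}{d}}.
\end{equation*}
The strict inequality \eqref{cond:GNS} says exactly $\frac{k}{q} > \frac{1}{r} - \frac{s}{d}$, making the numerator positive; the hypothesis $p \leq rk$ gives $\frac{k}{p} \geq \frac{1}{r}$, so the denominator is strictly positive as well; and the upper bound $q < rkd/(d-r)$ combined with $p \leq rk$ and $q > k$ forces $\theta < 1$. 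These are precisely the structural constraints encoding that all relevant embeddings are subcritical.

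The main obstacle I anticipate is handling the regime $p < k$ cleanly, where $L^{p/k}$ is only a quasi-Banach space and the identification $\|g\|_{L^{p/k}}^{1/k} = \|f\|_{L^p}$ is a quasi-norm relation rather than a norm relation. In this regime the classical Gagliardo-Nirenberg statement is not the textbook one, and one would need to invoke a version valid in the quasi-Banach setting, for example via Littlewood-Paley decomposition or the Brezis-Mironescu framework. A secondary technicality is that $\dot W^{s,r}$ admits fractional $s$, for which the integer-order interpolation must be replaced by its fractional analogue (Brezis-Mironescu, Cohen-DeVore-Petrushev-Xu). Both points are standard and can be cited rather than re-derived, but they are the only non-routine parts of the argument.
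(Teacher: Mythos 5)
The paper states this lemma without proof, treating it as a known result, so there is no argument to compare against directly. What I can tell you is that your substitution-based route, while giving the right scaling and exponent algebra, creates a genuine technical obstruction that is not present in the standard proof of this type of statement.

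Your reduction applies Gagliardo--Nirenberg to $g = f^k$, which requires interpolating between $L^{p/k}$ and $\dot W^{s,r}$. In every application of this lemma in the body of the paper, $p$ is fixed (equal to $\bar q$ close to $1$) while $k$ grows without bound as the iteration exponent $p_{\mathrm{app}}$ increases, so $p/k \ll 1$ is the generic regime, not a corner case. You flag this as a citable technicality, but a Gagliardo--Nirenberg inequality with the lower Lebesgue exponent strictly below $1$ is not a small perturbation of the textbook statement; the triangle inequality fails, the usual convexity/interpolation arguments break, and the known results in the quasi-Banach range carry extra hypotheses. Leaning on it here is exactly the kind of "cite and move on" that reviewers object to, especially when the dependence on $p/k$ must survive the limit $k \to \infty$.

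The standard proof avoids quasi-Banach spaces entirely by decoupling the two steps. First apply the homogeneous Sobolev embedding to $g = f^k$ alone: $\|f^k\|_{L^{r^\ast}} \lesssim \|f^k\|_{\dot W^{s,r}}$ with $r^\ast = dr/(d - sr)$, i.e. $\|f\|_{L^{kr^\ast}} \lesssim \|f^k\|_{\dot W^{s,r}}^{1/k}$. Then interpolate $L^q$ between $L^p$ and $L^{kr^\ast}$ for $f$ itself: $\|f\|_{L^q} \leq \|f\|_{L^p}^{1-\mu}\|f\|_{L^{kr^\ast}}^{\mu}$ with $1/q = (1-\mu)/p + \mu/(kr^\ast)$. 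Setting $\alpha_2 = 1-\mu$, $\alpha_1 = \mu/k$ reproduces $\alpha_1 k + \alpha_2 = 1$ and, using $1/(kr^\ast) = 1/(kr) - s/(dk)$, the scaling identity $1/q - 1/p = \alpha_1(-s/d + 1/r - k/p)$ exactly. The constraint $q < rkd/(d-r)$ is the condition $q < kr^\ast$ (for $s=1$), condition \eqref{cond:GNS} guarantees $\mu > 0$, and all Lebesgue exponents appearing are $\geq 1$ by the hypothesis $p \geq 1$, so only ordinary H\"older interpolation and a single subcritical Sobolev embedding are needed. You should rewrite the argument along these lines; the substitution route, as you have written it, does not close without a nontrivial additional lemma.
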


\bibliographystyle{plain}
\bibliography{nonlocal_eqns}

\end{document}